\documentclass{article}
\usepackage[margin=1.2in]{geometry}
\usepackage[utf8]{inputenc}
\usepackage{amssymb}
\usepackage{amsmath}
\usepackage{amsthm}
\usepackage[english]{babel}
\usepackage{graphicx}
\usepackage{bbm}
\usepackage[g]{esvect}
\usepackage{enumitem}
\usepackage{verbatim}
\usepackage{mathrsfs}
\usepackage{ifthen}
\usepackage{color}
\usepackage{psfrag}
\usepackage[unicode,colorlinks=true,pagebackref=false]{hyperref}

\numberwithin{equation}{section}
\swapnumbers
\theoremstyle{plain}
     \newtheorem{lemma}{Lemma}[section]
     \newtheorem{proposition}[lemma]{Proposition}
     \newtheorem{theorem}[lemma]{Theorem}
     \newtheorem{corollary}[lemma]{Corollary}
    \newtheorem{conjecture}[lemma]{Conjecture}

\theoremstyle{definition}
     \newtheorem{definition}[lemma]{Definition}
\theoremstyle{remark}
     \newtheorem{remark}[lemma]{Remark}

\renewenvironment{theorem}{\begin{Theorem}}{\end{Theorem}}

\renewenvironment{proof}{\begin{Proof}[\bfseries\proofname]}{\end{Proof}}
\newenvironment{prf}[1]{\begin{Proof}[\textbf{Proof of #1}]}{\end{Proof}}

\newcommand{\N}{\mathbb{N}}
\newcommand{\bbC}{\mathbb{C}}
\newcommand{\D}{\mathbb{D}}
\newcommand{\bbD}{\mathbb{D}}

\newcommand{\Cinf}{\overline{\mathbb{C}}}
\newcommand{\R}{\mathbb{R}}
\newcommand{\T}{\mathbb{T}}
\newcommand{\eps}{\varepsilon}

\newcommand{\bs}{\backslash}
\newcommand{\ol}{\overline}
\newcommand{\supp}{\mathrm{supp}}

\DeclareMathOperator*{\esssup}{ess\,sup}


\title{Asymptotics of $L^r$ extremal polynomials for ${0<r\leq\infty}$ on $C^{1+}$ Jordan regions}
\author{
Benedikt Buchecker\footnote{Institute of Analysis, TU Wien, Wien, A-1040, Austria, E-Mail: benedikt.buchecker@tuwien.ac.at}, 
Benjamin Eichinger\footnote{School of Mathematical Science, Lancaster University, Lancaster LA1 4YF, United Kingdom, E-Mail: b.eichinger@lancaster.ac.uk}, 
Maxim Zinchenko\footnote{Department of Mathematics and Statistics, University of New Mexico, Albuquerque, NM 87131, USA, E-Mail: maxim@math.unm.edu}
}

\date{\today}

\begin{document}

\maketitle
\begin{abstract}
We study strong asymptotics of $L^r$-extremal polynomials for measures supported on Jordan regions with $C^{1+}$ boundary for $0<r<\infty$. Using the results for $r=2$, we derive asymptotics of weighted Chebyshev and residual polynomials for upper-semicontinuous weights supported on a $C^{1+}$ Jordan region corresponding to $r=\infty$.
As an application, we show how strong asymptotics for extremal polynomials in the Ahlfors problem on a $C^{1+}$ Jordan region can be obtained from that for the weighted residual polynomials. Based on the results we pose a conjecture for asymptotics of weighted Chebyshev and residual polynomials for a $C^{1+}$ arc. 
\end{abstract}

\section{Introduction}
In this work, we study the asymptotic properties of $L^r$ and $L^\infty$ extremal polynomials as the degree tends to infinity. We start in the setting of the first. Let $\mu$ be a finite (positive) Borel measure with compact support $K\subset \bbC$, $z_0\in\mathbb C$ and $0<r<\infty$. The \emph{Christoffel function} $\lambda_n(\mu,z_0,r)$ is defined by 
	  	    \begin{align}\label{intro3}
	  	            \lambda_n(\mu,z_0, r) = \inf\left\{\int_K |P|^r d\mu: P\in\mathcal{P}_n,\; P(z_0)=1\right\}
	  	        \end{align}
                    where $\mathcal{P}_n$ is the space of polynomials of degree $\leq n$.
	  	        For the point $z_0=\infty$ the Christoffel function is defined by
	  	        \begin{align}\label{intro4}
                    \lambda_n(\mu,\infty, r) = \inf\left\{\int_K |P|^r d\mu: P \text{ is a monic polynomial of degree } n\right\}.
	  	        \end{align}
Both minimization problems have a minimizer $P_{n,\mu,z_0, r}$ called the $n$-th \textit{minimizing polynomial} of the measure $\mu$. For $1<r<\infty$ and $\mu$ supported on at least $n+1$ points the $L^r(K,\mu)$ norm is strictly convex, and hence in this case the minimizing polynomial $P_{n,\mu,z_0, r}$ is unique. The difference between \eqref{intro3} and \eqref{intro4} is that in \eqref{intro4} the extremal point is $\infty$, which is also the pole of the extremizing functions. For this reason, classically these problems are often studied independently. However, in \cite{ELY24}, the authors suggest a unified treatment for $K\subset\R$ in the setting of rational functions. We follow this methodology and show in Lemma \ref{lemma:continuous} that after appropriate normalization as a function of $z_0$, $\lambda_n(\mu,\infty, r)$ is the continuous extension of $\lambda_n(\mu,z_0, r)$.

The most studied case corresponds to $r=2$ and $z_0=\infty$ in which case the minimizing polynomial is the monic orthogonal polynomial associated with $\mu$. If $\supp\,\mu=\partial\D$ then Szeg\H o's theorem states that 
\begin{align}\label{intro1}
\lim_{n\to\infty}\lambda_n(\mu,\infty,2)=\exp\left(\int_{\partial\D}\log f(z)dm(z)\right).    
\end{align}
Here $\mu=fdm+\mu_s$, where $m$ is the normalized Lebesgue measure on $\partial\mathbb D$ and $\mu_s$ is singular with respect to the Lebesgue measure. It follows from Jensen's inequality that  
 \begin{align}\label{intro2}
 \int_{\partial\D}\log f(z)dm(z)\leq \log(\mu(\partial\bbD)-\mu_s(\partial\D))<\infty.
 \end{align}
However, $\int_{\partial\D}\log f(z)dm(z)=-\infty$ is possible. It is remarkable, that  \eqref{intro1}
also holds in this case. Secondly, let us point out that $\mu_s$ does not appear in the limit of $\lambda_n$. 

Geronimus \cite{geronimus} extended Szeg\H o's theorem to $L^r(\mu)$ extremal polynomials for $0<r<\infty$, $z_0=\infty$, and measures $\mu$ that are supported on $C^{1+}$ Jordan curves $\Gamma$ and are absolutely continuous with respect to the arc length measure.
Here we prove Szeg\H o's theorem for general measures $\mu$ supported on compact sets $K$ that are bounded by $C^{1+}$ Jordan curves, $0<r<\infty$ and arbitrary $z_0$ in the unbounded component of $\overline{\bbC}\setminus K$, where $\overline{\bbC}$ denotes the Riemann sphere. Moreover, we prove convergence of the extremal polynomials. For the precise statements, we need to introduce some quantities from potential theory.  We refer to \cite{AG01,GM08,Hel14,Lan72,Ran95,ST97} for basic notions of potential theory.

Let $K\subset\bbC$ be a compact set of positive logarithmic capacity such that  $\Omega=\overline{\bbC}\setminus K$ is simply connected. Then there exists  conformal map $\Phi_{z_0}$  from $\Omega$ to $\ol\bbC\setminus\ol\D$ such that $\Phi_{z_0}(\infty)=\infty$ and $\Phi_{z_0}(z_0)>0$ if $z_0\neq\infty$ and $\Phi_{z_0}'(z_0)>0$ if $z_0=\infty$. For $z_0\in\Omega\setminus\{\infty\}$ we define 
\[
C(K,z_0)=\frac{1}{\Phi_{z_0}(z_0)}=e^{-g_K(z_0,\infty)},
\]
where $g_K(z,\infty)$ denotes the Green function of $\Omega$ with pole at $\infty$. Let $\omega_z$ denote the harmonic measure of $\Omega$ and a point $z\in\Omega$.  For $z_0=\infty,$ we set $C(K,\infty)=1/\Phi'_\infty(\infty),$ which is the classical logarithmic capacity of $K$. 
\begin{definition}\label{def_ent}
    Let $\mu = f_{z_0} d\omega_{z_0} + \mu_s$ be a positive and finite measure supported on $K$ with $\mu_s$ singular w.r.t. $\omega_{z_0}$. We say that $\mu$ satisfies the Szeg\H o condition if
    \begin{align}\label{intro7}
        \int_\Gamma \log f_{z_0}\, d\omega_{z_0}> -\infty.
    \end{align}
    In this case we define the \emph{entropy integral} with respect to $f_{z_0}$ and $z_0$ as
    \begin{align}\label{S-def}
        S(f_{z_0},z_0) := \exp\left(\int_\Gamma \log f_{z_0}\, d\omega_{z_0}\right).
    \end{align}
    If $\mu$ does not satisfy the Szeg\H o condition we define $S(f_{z_0},z_0) = 0$. 
\end{definition}
Harnack's inequality implies that the Szeg\H o condition does not depend on the choice of $z_0\in\Omega$.

We will prove the following two results for the Christoffel function $\lambda_n(\mu,z_0,r)$.
\begin{theorem}\label{Thm-LowerBound}
Let $K\subset\bbC$ be a compact set of positive capacity such that  $\Omega=\overline{\bbC}\setminus K$ is simply connected,  $z_0\in\Omega$, and $d\mu=f_{z_0}d\omega_{z_0}+d\mu_s$ be a finite measure on $K$ with $\mu_s$ singular w.r.t. $\omega_{z_0}$. Then for any $n\ge0$ and $0<r<\infty$
\begin{align}
    \lambda_n(\mu,z_0,r) \ge S(f_{z_0},z_0)C(K,z_0)^{nr}.
\end{align}
\end{theorem}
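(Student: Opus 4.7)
The plan is to prove the bound pointwise in the admissible class — for any polynomial $P$ with $P(z_0)=1$ (or $P$ monic of degree $n$ if $z_0=\infty$), I would show
\[
    \int_K |P|^r\,d\mu \;\ge\; S(f_{z_0},z_0)\,C(K,z_0)^{nr},
\]
and then take the infimum over $P$. The argument combines three ingredients. First, since $\mu_s\ge 0$, $\int_K|P|^r\,d\mu \ge \int_K|P|^r f_{z_0}\,d\omega_{z_0}$. Second, since $\omega_{z_0}$ is a probability measure and the integrand is non-negative, Jensen's inequality (integral AM--GM) gives
\[
    \int|P|^r f_{z_0}\,d\omega_{z_0} \;\ge\; S(f_{z_0},z_0)\,\exp\!\left(r\!\int \log|P|\,d\omega_{z_0}\right),
\]
which is informative only when Szegő's condition holds; otherwise $S(f_{z_0},z_0)=0$ and the theorem is trivial. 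Third, one needs the potential-theoretic lower bound $\int\log|P|\,d\omega_{z_0}\ge n\log C(K,z_0)$.

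To prove this third inequality I would introduce the auxiliary function $F(z):=P(z)/\Phi_{z_0}(z)^n$. The degree constraint on $P$, together with the simple pole of $\Phi_{z_0}$ at $\infty$ and its non-vanishing on $\Omega$, ensures that $F$ is holomorphic on $\Omega$ including at $\infty$. A direct computation using $C(K,z_0)=1/\Phi_{z_0}(z_0)$ when $z_0\neq\infty$, and $C(K,\infty)=1/\Phi_\infty'(\infty)$ with $P$ monic when $z_0=\infty$, yields $|F(z_0)|=C(K,z_0)^n$ in both cases. Since $|\Phi_{z_0}|\equiv 1$ on $\partial K$ in the $\omega_{z_0}$-a.e.\ boundary-value sense, $|F|=|P|$ on $\partial K$ almost everywhere, and the maximum principle makes $\log|F|$ bounded above on $\Omega$. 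The submean value inequality for the bounded-above subharmonic function $\log|F|$ with respect to harmonic measure then gives
\[
    n\log C(K,z_0)=\log|F(z_0)| \;\le\; \int\log|F|\,d\omega_{z_0} \;=\; \int\log|P|\,d\omega_{z_0},
\]
which is exactly what is needed. Combining the three estimates and taking the infimum over admissible $P$ finishes the proof.

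The principal technicality will be the rigorous justification of the third step under the generality of the theorem, where no regularity is assumed on $\partial K$: the identification $|F|=|P|$ a.e.\ on $\partial K$ rests on standard Fatou-type boundary values of the conformal map $\Phi_{z_0}$, and the submean inequality with respect to harmonic measure for a bounded-above subharmonic function must be applied via the Perron--Brelot generalized boundary-value framework, with attention to any boundary points where $P$ vanishes. These are standard tools from the potential-theory references cited earlier, but need to be invoked with some care.
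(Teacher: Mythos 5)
Your proposal is correct, and its first two steps (discarding $\mu_s$ and applying Jensen's inequality to $\int|P|^rf_{z_0}\,d\omega_{z_0}$) coincide exactly with the paper's proof. Where you genuinely diverge is in establishing the key potential-theoretic inequality $\int\log|P|\,d\omega_{z_0}\ge n\log C(K,z_0)$. The paper factors $P(z)=c\prod_{j=1}^m(z-z_j)$ and applies, zero by zero, the identity
$\int\log|\zeta-z_j|\,d\omega_{z_0}(\zeta)=g_K(z_0,z_j)+\log|z_0-z_j|-g_K(z_0,\infty)$
together with $g_K(z_0,z_j)\ge0$ and $C(K,z_0)\le1$ (to absorb the case $\deg P=m<n$); it treats $z_0=\infty$ separately by citation. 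You instead form $F=P/\Phi_{z_0}^n$ and invoke the generalized maximum principle (submean inequality against harmonic measure) for the bounded-above subharmonic function $\log|F|$. The two are really the integrated and root-wise forms of the same fact, but your version buys a uniform treatment of $z_0=\infty$ and of $\deg P<n$, at the cost of invoking the Perron--Brelot machinery you flag at the end. On that technicality you are being more cautious than necessary: you never need the equality $|F|=|P|$ $\omega_{z_0}$-a.e.\ on $\partial K$ (and hence no Fatou-type boundary values of $\Phi_{z_0}$). Since $|\Phi_{z_0}|\ge1$ throughout $\Omega$ and $P$ is continuous, one has the elementary one-sided bound $\limsup_{z\to\zeta}|F(z)|\le|P(\zeta)|$ for every $\zeta\in\partial K$, which is all the submean inequality requires; the boundary function $\log|P|$ is upper semicontinuous with values in $[-\infty,\infty)$, hence resolutive, and its finitely many $-\infty$ values lie in a polar set not charged by $\omega_{z_0}$. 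With that observation your argument closes cleanly and is a valid alternative to the paper's.
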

For measures supported on the unit circle, the above lower bound goes back to the work of Szeg\H{o}. For general compact sets $K\subset\bbC$ and $z_0=\infty$ the above result was obtained in \cite{AZ20}.

\begin{theorem}\label{thm11}
Let $K\subset\bbC$ be a compact set bounded by a $C^{1+}$ Jordan curve $\Gamma$ and $\mu = f_{z_0} d\omega_{z_0} + \mu_s$ be a positive and finite measure supported on $K$ with $\mu_s$ singular w.r.t. $\omega_{z_0}$. Then for $0<r<\infty$
\begin{align}\label{intro5}
\lim_{n\to\infty}\frac{\lambda_n(\mu,z_0,r)}{C(K,z_0)^{nr}}=S(f_{z_0},z_0).
\end{align}
\end{theorem}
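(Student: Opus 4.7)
By Theorem~\ref{Thm-LowerBound}, it suffices to prove the matching upper bound
\[
\limsup_{n\to\infty}\frac{\lambda_n(\mu,z_0,r)}{C(K,z_0)^{nr}}\;\le\;S(f_{z_0},z_0),
\]
and I would obtain it by constructing admissible test polynomials of degree $n$ whose $L^r(\mu)$-mass realises this asymptotic.  The case $z_0\in\Omega\setminus\{\infty\}$ I would treat in detail (in this regime $C(K,z_0)<1$, so test polynomials are exponentially small on $\Gamma$), and then the case $z_0=\infty$ follows by Lemma~\ref{lemma:continuous}.  The strategy has three layers: first assume $f_{z_0}$ is H\"older-continuous and bounded above and below away from $0$, then cancel the contribution of $\mu_s$, and finally drop the regularity hypotheses by truncation.

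Under the smoothness assumption, the $C^{1+}$ regularity of $\Gamma$ makes $\Phi_{z_0}$ a $C^1$-diffeomorphism of $\ol\Omega$ onto $\ol\bbC\setminus\bbD$ and $d\omega_{z_0}$ a H\"older multiple of arc length on $\Gamma$; pulling back via $\Phi_{z_0}$ the standard outer function on $\ol\bbC\setminus\ol\bbD$ with prescribed boundary modulus yields $D$ holomorphic and non-vanishing on $\Omega$, continuous on $\ol\Omega$, normalised with $D(z_0)>0$, satisfying $|D|^r=f_{z_0}$ on $\Gamma$ and $D(z_0)^r=S(f_{z_0},z_0)$.  The model
\[
H_n(z)\;:=\;C(K,z_0)^n\,\Phi_{z_0}(z)^n\,\frac{D(z_0)}{D(z)}
\]
obeys $H_n(z_0)=1$ and $|H_n|^r f_{z_0}\equiv C(K,z_0)^{nr}S(f_{z_0},z_0)$ on $\Gamma$, and is holomorphic on $\Omega\setminus\{\infty\}$ with a pole of order $n$ at $\infty$.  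To polynomialise it, I would use that the Faber polynomials $F_k$ of $\Gamma$ satisfy $F_k-\Phi_{z_0}^k\to 0$ uniformly on $\ol\Omega$ (using $C^{1+}$ regularity together with the rotation relating $\Phi_{z_0}$ to $\Phi_\infty$), and that $D(z_0)/D$ admits a Faber--Laurent expansion in the coordinate $1/\Phi_{z_0}$ with rapidly decaying coefficients; combining these and normalising to $P_n(z_0)=1$ produces $P_n\in\mathcal P_n$ with $\sup_\Gamma|P_n-H_n|=o(C(K,z_0)^n)$, hence
\[
\int_\Gamma|P_n|^r f_{z_0}\,d\omega_{z_0}\;=\;(1+o(1))\,C(K,z_0)^{nr}\,S(f_{z_0},z_0).
\]

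The main technical obstacle is the singular part: since $|P_n|\asymp C(K,z_0)^n$ on $\Gamma$, a naive estimate gives $\int|P_n|^r\,d\mu_s=O(C(K,z_0)^{nr})$, which would pollute the limit whenever $\mu_s\ne0$.  I would absorb this by splitting $P_n$ as a product of the Szeg\H o approximant above (of degree $n-d_n$) and an auxiliary Blaschke-type polynomial factor of degree $d_n=o(n)$, whose zeros, pulled back through $\Phi_{z_0}$, are placed near $\supp\mu_s$ using that $\mu_s\perp\omega_{z_0}$, and arranged so that the factor tends to $1$ in $L^r(f_{z_0}d\omega_{z_0})$ while its $L^r(\mu_s)$-mass tends to $0$; this is the Jordan-domain analog of the $\mu_s$-annihilation step in Szeg\H o's theorem on $\T$, relying on an F.\ and M.\ Riesz-type theorem in $\Omega$ and requiring direct handling of $0<r<1$ where convexity is unavailable.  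To drop the regularity on $f_{z_0}$, I would truncate $f^M_{z_0}:=\max(f_{z_0},1/M)\wedge M$, approximate from above by H\"older $g^{M,\eps}\ge f^M_{z_0}$ with $\int\log g^{M,\eps}\,d\omega_{z_0}\le\int\log f^M_{z_0}\,d\omega_{z_0}+\eps$, use the smooth-case polynomials for the measure $g^{M,\eps}\,d\omega_{z_0}+\mu_s$, and control the excess $\int_{\{f_{z_0}>M\}}|P_n|^r f_{z_0}\,d\omega_{z_0}$ by integrability of $f_{z_0}$ together with the uniform $\|P_n\|_\Gamma=O(C(K,z_0)^n)$ bound.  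Letting $n\to\infty$, then $\eps\to0$, then $M\to\infty$, and using $S(f^M_{z_0},z_0)\to S(f_{z_0},z_0)$ by monotone convergence on $\log f^M_{z_0}$ (automatically covering the Szeg\H o-failure case $S=0$), completes the proof.
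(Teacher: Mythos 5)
Your plan reproduces the lower bound correctly and the general shape of the upper bound (Faber-polynomial transplantation of a circle construction) is in the spirit of the paper, but there are three genuine gaps. First, and most concretely, you never control the test polynomials on the \emph{interior} of $K$: the theorem allows $\mu$ to charge $G=K\setminus\Gamma$, where $\omega_{z_0}$ puts no mass, and your $P_n$ satisfies only $\|P_n\|_K=\|P_n\|_\Gamma=O(C(K,z_0)^n)$ by the maximum principle, so the naive bound gives $\int_G|P_n|^r\,d\mu=O(C(K,z_0)^{nr})\mu(G)$, which pollutes the limit exactly as you observe for $\mu_s$ on $\Gamma$ — but your proposed fix (zeros pulled back through $\Phi_{z_0}$) lives on $\Gamma$ and cannot touch the interior. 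The paper handles this with the second Faber estimate of Theorem~\ref{thm6}, $|F_n|\le c(L)n^{-\alpha}$ on compact $L\subset G$, plus an exhaustion $L\uparrow G$; some such decay statement in the interior is indispensable. Second, the singular-part annihilation as described does not work: a measure singular w.r.t.\ $\omega_{z_0}$ can have $\supp\mu_s=\Gamma$, so ``placing $o(n)$ zeros near $\supp\mu_s$'' is vacuous, and a polynomial Blaschke-type factor has modulus comparable to $1$ on $\Gamma$ anyway; the classical mechanism uses outer functions small on an open set $U\supset E$ with $\omega_{z_0}(U)$ small, where $E$ is a carrier (not the support) of $\mu_s$, and for $0<r<1$ this is genuinely delicate. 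You are in effect re-proving the hardest part of Szeg\H{o}'s theorem and asserting rather than supplying the key step. Third, deducing the case $z_0=\infty$ from Lemma~\ref{lemma:continuous} is a non sequitur: that lemma gives continuity of $W_{r,n}(\mu,\cdot)$ for each \emph{fixed} $n$, which does not transfer the $n\to\infty$ limit from $\Omega\setminus\{\infty\}$ to $\infty$ without uniformity in $z_0$ that you have not established.

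For comparison, the paper sidesteps the second and third issues entirely: it pushes $\mu$ forward to the unit circle via $\Phi_{z_0}$ (a diffeomorphism of $\ol\Omega$ by Kellogg's theorem), takes the degree-$m$ circle minimizer of $\lambda_m(\Phi_{z_0}^*\mu,w_0,r)$ for fixed $m$, transplants it back as $q_n=\sum_{j=0}^m a_jF_{n-j}$, and then lets $n\to\infty$, $L\uparrow G$, $m\to\infty$, finally invoking the classical circle Szeg\H{o} theorem (Theorem~2.5.4 of \cite{OPUC}) to identify $\lambda_\infty(\Phi_{z_0}^*\mu,w_0,r)$ with $S(f_{z_0},z_0)$. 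All of the singular part, the non-smooth density, the Szeg\H{o}-failure case $S=0$, and both $z_0\ne\infty$ and $z_0=\infty$ are thereby absorbed into the known circle result rather than re-derived on $\Omega$. If you want to salvage your direct construction, you would need to (i) import the interior decay of Faber polynomials, (ii) replace the zero-placement step by an outer-function argument on carriers of $\mu_s$ valid for all $0<r<\infty$, and (iii) treat $z_0=\infty$ directly rather than by continuity.
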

This generalizes previous results in several directions. As a consequence of Frostman's theorem \cite[Theorem 3.3.4., Theorem 3.7.6]{Ran95} $\omega_{z_0}$ is supported on $\partial K$. In particular, $\mu|_{K\setminus\partial K}$ is singular with respect to $\omega_{z_0}$ and does not appear in \eqref{intro5}. Measures that can also be supported in $\operatorname{int}(K)$ are not covered by the classical Szeg\H o theorem. However, in the setting of $K=\overline{\bbD}$, this was for instance also considered in \cite{NazVolYud06}. Secondly, Theorem \ref{thm11} proves Szeg\H o asymptotics for all points $z_0\in\Omega$ and not only the point $z_0=\infty$. For $\Gamma=\partial\D$, this is shown in \cite[Theorem 2.5.4]{OPUC}, but was not known for more general sets. 

One says that $\mu$ admits \textit{strong Szeg\H o asymptotics} if 
\[
\frac{P_{n,\mu,z_0,r}(z)}{[\Phi_{\infty}(z)C(K,\infty)]^n}
\]
converges uniformly on compact subsets of $\Omega$. We will present strong Szeg\H o asymptotics below. The limit functions are given as reproducing kernels of an associated $H^2$ space. We will now give the necessary definitions. Assume that $\mu$ satisfies the Szeg\H o condition. For $z\in\Omega$, we define the outer function 
\begin{align}\label{eq:14}
        R_{f_{z_0}}(z) &=  \exp\left( \int \log f_{z_0}\,d\omega_z + i* \int \log f_{z_0}\,d\omega_z\right)
\end{align}
where $i* \int \log f_{z_0}\,d\omega_z$ is the harmonic conjugate of the harmonic function with boundary values $\log f_{z_0}$ such that $R_{f_{z_0}}(\infty)>0$. Note that $|R_{f_{z_0}}(z_0)|=S(f_{z_0},z_0)$. Let $\Psi(\zeta)=\Phi_\infty^{-1}(1/\zeta)$ be the conformal map from $\D$ to $\Omega$ which maps $0$ to $\infty$. 
For $0<r\leq\infty$ we define the Hardy space
\begin{align*}
    H^r(\Omega) := \left\{F\in \text{Hol}(\Omega): F \circ\Psi\in H^r(\D) \right\},
\end{align*}
where $H^r(\D)$ denotes the standard Hardy space of the disc. For $\mu$ satisfying the Szeg\H o condition, the corresponding weighted Hardy space is defined by
\begin{align*}
    H^r(\Omega,\mu) := \left\{F\in \text{Hol}(\Omega): F R_{f_{z_0}}^{1/r}\in H^r(\Omega) \right\}.
\end{align*}
For the case $r=2$, point evaluation in $H^r(\Omega,\mu)$ is continuous and thus, there is a reproducing kernel $K_\mu(z,w)$, cf. \eqref{eq:99}. It is closely related to an analog of \eqref{intro3} in $H^2(\Omega,\mu)$. Namely, standard Hilbert space arguments yield that 
\begin{align*}
    S(f_{z_0},z_0) = |R_{f_{z_0}}(z_0)| = \frac{1}{K_\mu(z_0,z_0)} = \inf\{\|F\|_{H^2(\Omega,\mu)}^2: F \in H^2(\Omega,\mu),F(z_0)=1\}
\end{align*}
where this infinum is uniquely attained by $F_{\mu,z_0,2} = \frac{K_\mu(\cdot,z_0)}{K_\mu(z_0,z_0)}= \sqrt{\frac{R_{f_{z_0}}(z_0)}{R_{f_{z_0}}(z)}}$. Similarly for $0<r<\infty$, we define $F_{\mu,z_0,r}= \left(\frac{R_{f_{z_0}}(z_0)}{R_{f_{z_0}}(z)}\right)^{1/r}$.
Strong Szeg\H o asymptotics are given in terms of $F_{\mu,z_0,r}$:
\begin{theorem}\label{thm10}
 Let $K\subset\bbC$ be a compact set bounded by a $C^{1+}$ Jordan curve $\Gamma$. Let $\Omega = \Cinf\bs K$, $z_0\in \Omega$, $0<r<\infty$ and $\mu=f_{z_0}d\omega_{z_0} + \mu_s$ a positive and finite measure supported on $K$ with singular part $\mu_s$. If $\mu$ satisfies the Szeg\H{o} condition and $P_{n,\mu,z_0,r}$ is an $n$-th minimizing polynomial for $\mu$, then
    \[
    \lim_{n\to\infty}\frac{C(K,z_0)^{-n}}{\Phi_{z_0}^n} P_{n,\mu,z_0,r}= F_{\mu,z_0,r}
    \]
     in $H^r(\Omega,\mu)$ and locally uniformly in $\Omega$.  
\end{theorem}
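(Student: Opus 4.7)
The plan is a three-step normalize–compactness–uniqueness scheme built on Theorem \ref{thm11}.

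First, I would normalize by setting $Q_n(z):=C(K,z_0)^{-n}\Phi_{z_0}(z)^{-n}P_{n,\mu,z_0,r}(z)$. Since $\Phi_{z_0}$ is a conformal bijection from $\Omega$ onto the exterior of $\ol{\D}$ with a simple pole at $\infty$ and $\Phi_{z_0}(z_0)=1/C(K,z_0)$, the function $Q_n$ is holomorphic on $\Omega$ (including $\infty$) and satisfies $Q_n(z_0)=1$. Because $|\Phi_{z_0}|\equiv 1$ on $\Gamma$,
\[
\|Q_n\|_{H^r(\Omega,\mu)}^r = \int_\Gamma|Q_n|^r f_{z_0}\,d\omega_{z_0} \le C(K,z_0)^{-nr}\int_K |P_{n,\mu,z_0,r}|^r\,d\mu = \frac{\lambda_n(\mu,z_0,r)}{C(K,z_0)^{nr}},
\]
which by Theorem \ref{thm11} converges to $S(f_{z_0},z_0)$. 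In particular, $\{Q_n\}$ is bounded in $H^r(\Omega,\mu)$.

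Next I would run a normal families argument. The $C^{1+}$ regularity of $\Gamma$ ensures $\Psi$ extends smoothly to $\ol{\D}$, and standard $H^r(\D)$ pointwise estimates transfer the integral bound into uniform bounds on compact subsets of $\Omega$; hence $\{Q_n\}$ is normal. Let $F$ be the locally uniform limit of a subsequence $Q_{n_k}$. Then $F\in\text{Hol}(\Omega)$, $F(z_0)=1$, and Fatou's lemma on the boundary yields $F\in H^r(\Omega,\mu)$ with $\|F\|_{H^r(\Omega,\mu)}^r\le \liminf_k\|Q_{n_k}\|_{H^r(\Omega,\mu)}^r\le S(f_{z_0},z_0)$. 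To identify $F$, I would show $F_{\mu,z_0,r}$ is the unique extremizer of this constrained problem: for any $F\in H^r(\Omega,\mu)$ with $F(z_0)=1$, writing $G:=F R_{f_{z_0}}^{1/r}\in H^r(\Omega)$ with $G(z_0)=R_{f_{z_0}}(z_0)^{1/r}$, subharmonicity of $\log|G|$ and Jensen's inequality (concavity of $\log$) give
\[
\tfrac{1}{r}\log S(f_{z_0},z_0)=\log|G(z_0)|\le\int_\Gamma\log|G|\,d\omega_{z_0} = \tfrac{1}{r}\int_\Gamma\log(|F|^r f_{z_0})\,d\omega_{z_0}\le \tfrac{1}{r}\log\|F\|_{H^r(\Omega,\mu)}^r,
\]
with equality throughout iff $G$ is outer and $|F|^r f_{z_0}$ is $\omega_{z_0}$-a.e.\ constant; these conditions together with $G(z_0)=R_{f_{z_0}}(z_0)^{1/r}$ force $G\equiv R_{f_{z_0}}(z_0)^{1/r}$, whence $F=F_{\mu,z_0,r}$. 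Uniqueness of the subsequential limit upgrades locally uniform convergence to the full sequence.

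Finally, the same chain of inequalities also forces $\|Q_n\|_{H^r(\Omega,\mu)}^r\to S(f_{z_0},z_0)=\|F_{\mu,z_0,r}\|_{H^r(\Omega,\mu)}^r$. For $r>1$, this norm convergence combined with weak convergence of boundary traces (which follows from locally uniform interior convergence and uniqueness of harmonic extensions) yields strong $H^r(\Omega,\mu)$-convergence by uniform convexity. For $0<r\le 1$ I would instead extract $\omega_{z_0}$-a.e.\ convergence of boundary values by exploiting that $Q_n/F_{\mu,z_0,r}\to 1$ locally uniformly with $L^r(S\,d\omega_{z_0})$-norm tending to $1$, and then invoke the Brezis--Lieb lemma. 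I anticipate two main obstacles: (i) transferring the boundary integral bound into uniform interior bounds on compacta, where the $C^{1+}$ hypothesis enters through smoothness of $\Psi$ up to $\partial\D$; and (ii) the strong $H^r$-convergence for $0<r<1$, where uniform convexity is unavailable so one must establish a.e.\ boundary convergence by hand via the outer structure of $F_{\mu,z_0,r}$.
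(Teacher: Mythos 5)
Your normalization, the upper bound $\limsup_n\|Q_n\|_{H^r(\Omega,\mu)}^r\le S(f_{z_0},z_0)$ via Theorem \ref{thm11}, the normal-families extraction, and the identification of every subsequential limit as $F_{\mu,z_0,r}$ through the Jensen/outer-function extremal characterization are all correct, and the last of these is a nice self-contained argument that the paper only records for $r=2$ (via the reproducing kernel). The locally uniform convergence therefore goes through. The problem is the final step, strong convergence in $H^r(\Omega,\mu)$, which is the actual crux of the theorem. For $r>1$ your Radon--Riesz argument works (boundedness in $L^r$ gives weak subsequential limits, which are identified by pairing with the Poisson kernel against the interior limit, and norm convergence then upgrades weak to strong convergence). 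But for $0<r\le 1$ your plan has a genuine gap: the Brezis--Lieb lemma needs $\omega_{z_0}$-a.e.\ convergence of the boundary values, and locally uniform convergence in the interior of $\Omega$ does \emph{not} yield a.e.\ convergence on $\Gamma$. Your proposed remedy --- ``exploiting that $Q_n/F_{\mu,z_0,r}\to1$ locally uniformly with $L^r$-norm tending to $1$'' --- is circular: after transporting to the disc, the data you have (interior convergence to a constant, point value at the base point tending to that constant, and $L^r$-norms tending to its modulus) is exactly the hypothesis of the Keldysh lemma, and the a.e.\ boundary convergence you want to feed into Brezis--Lieb is essentially its conclusion. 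You have not supplied the additional ingredient (typically: pass to the outer functions $O_n$ with $|O_n|=|Q_nR_{f_{z_0}}^{1/r}|^{r/2}$, run the $H^2$ Hilbert-space identity to get $O_n\to\mathrm{const}$ in $H^2$ and hence a.e.\ along subsequences, then control the inner factors and the argument) that makes this work for small $r$. Note also that $r=1$ already falls outside your uniform-convexity case, so the gap is not confined to $r<1$.

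By contrast, the paper's route avoids both the normal-families step and the case split entirely: it uses the explicit isometric isomorphism $\Psi_{z_0}^r:H^r(\Omega,\mu)\to H^r(\D)$, composes with the M\"obius map $b_{-w_0}$ to move the normalization point to the origin, and then invokes the Keldysh lemma (valid for all $0<r<\infty$) as a black box to get $\|g_n-1\|_{H^r(\D)}\to0$ directly; pulling back gives $H^r(\Omega,\mu)$-convergence, and locally uniform convergence is deduced \emph{afterwards} from continuity of point evaluation. If you cite the Keldysh lemma in place of your Radon--Riesz/Brezis--Lieb endgame, your argument closes for all $r$; as written, the case $0<r\le1$ is incomplete.
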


We now turn to asymptotics of extremal polynomials with respect to the sup-norm. Let $K\subset\bbC$ be a compact, non-finite set and $\rho:K\to[0,\infty)$ an upper-semicontinuous function positive at infinitely many points of $K$. We call such $\rho$ a \emph{weight} on $K$ and note that every weight is bounded due to the upper-semicontinuity assumption. For a bounded function $f$ on $K$, we define the weighted sup-norm by $\|f\|_\rho = \sup_{z\in K}|\rho(z)f(z)|$. Similar as in \eqref{intro3}, \eqref{intro4}, we define for $z_0\in \Omega\setminus\{\infty\}$
\begin{align}\label{intro21}
    t_n(\rho,z_0) &= \inf\left\{\|P\|_\rho : P\in\mathcal{P}_n,\; P(z_0)=1\right\}
\end{align}
and
\begin{align}\label{intro22}
   t_n(\rho,\infty) &= \inf\left\{\|P\|_\rho : P \text{ is a monic polynomial of degree } n\right\}.
\end{align}
In Lemma~\ref{L.2.3} we will show existence and uniqueness of weighted Chebyshev and residual polynomials following the argument of \cite{CSZ20} in the unweighted setting.  The extremal functions, $T_{n,\rho,z_0}$, are called (weighted) residual polyonomials if $z_0\neq \infty$ and Chebyshev polynomials if $z_0=\infty$. Unweighted (i.e., $\rho\equiv 1$) Chebyshev polynomials are classical objects and asymptotic results date back to the work of Faber \cite{Fab20}, Fekete \cite{Fek23}, and Szeg\H{o} \cite{Sze24}. For recent studies on residual polynomials in the unweighted case, we refer to \cite{CSZ23,ELY24}. Interest in such polynomials stems from numerical analysis where they arise naturally in the analysis of the Krylov subspace iterations, see, for example, \cite{DTT98,Fis96,Kui06}. Recently they have also been used to study the Remez inequality \cite{EY21}. 

In the unweighted case, Szeg\H{o} showed the following universal lower bound for an arbitrary compact set $K\subset\bbC$,
\begin{align}\label{intro10}
  t_n(1,\infty) \ge C(K,\infty)^n, \quad n\ge0.
\end{align}
For compact sets $K\subset\bbC$ of positive capacity such that $\Omega=\Cinf\bs K$ is simply connected the inequality \eqref{intro10} has an extension to the weighted setting and general $z_0\in\Omega$,
\begin{align}\label{intro8}
  t_n(\rho,z_0) \ge S(\rho,z_0)C(K,z_0)^n, \quad n\ge0,
\end{align}
where $S(\rho,z_0)$ is the entropy integral introduced in \eqref{S-def}.
We say that $\rho$ belongs to the Szeg\H o class, if the measure $\rho d\omega_{z_0}$ satisfies the Szeg\H o condition \eqref{intro7} and set $S(\rho,z_0)=0$ if $\rho$ does not belong to the Szeg\H o class. 
The lower bound \eqref{intro8} is well known in the unweighted case, see for example \cite{DTT98,Kui06,CSZ23}, and in the weighted case it follows from the weighted analog of the Bernstein--Walsh inequality \cite[Theorem~12]{NSZ21}. 

The first results on strong asymptotics were proven by Faber in the unweighted case. In the following, assume that $K$ is bounded by a Jordan curve $\Gamma,$ so that $\Omega$ is simply connected. For analytic $\Gamma$, Faber  showed that
\begin{align}\label{intro12}
  \lim_{n\to\infty}\frac{t_n(1,\infty)}{C(K,\infty)^n}=1
\end{align}
and
\begin{align}\label{intro13}
  \frac{T_{n,1,\infty}(z)}{[\Phi_\infty(z)C(K,\infty)]^n} \to 1,
\end{align}
uniformly on compact subsets of $\Omega=\overline{\bbC}\setminus K$. 
Faber's asymptotic results \eqref{intro12} and \eqref{intro13} were extended to weighted Chebyshev polynomials by Widom in \cite{widom} for Jordan curves $\Gamma$ of class $C^{2+}$ and a Szeg\H{o} class weight $\rho$ supported on $\Gamma$.
We extend the Faber-Szeg\H o-Widom asymptotics to residual polynomials and also to weights that are supported on all of $K$ and are not necessarily in the Szeg\H o class. If $\rho$ belongs to the Szeg\H o class, let $R_\rho$ be the outer function in $\Omega$ with $|R_\rho|=\rho$ on $\partial\Omega$ defined as in \eqref{eq:14}.
\begin{theorem}\label{thm9}
    Let $K\subset\bbC$ be a compact set bounded by a $C^{1+}$ Jordan curve $\Gamma$, $z_0\in\overline{\bbC}\setminus K$ and $\rho$ be a weight on $K$.  Then
    \begin{align}\label{intro15}
        \lim_{n\to\infty} \frac{t_n(\rho,z_0)}{C(K,z_0)^n} = S(\rho,z_0).
    \end{align}
    If $\rho$ is a Szeg\H{o} class weight and $T_{n,\rho,z_0}$ is the $n$-th Chebyshev/residual polynomial with respect to $K$ and $\rho$, then
    \begin{align}\label{intro14}
        \lim_{n\to \infty} \frac{C(K,z_0)^{-n}}{\Phi_{z_0}^n(z)}T_{n,\rho,z_0}(z) =\frac{R_{\rho}(z_0)}{R_{\rho}(z)}
    \end{align}
    in $H^2(\Omega,\rho^2d\omega_{z_0})$ and locally uniform for $z\in \Omega$.
\end{theorem}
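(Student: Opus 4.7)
The lower bound $t_n(\rho,z_0)\ge S(\rho,z_0)C(K,z_0)^n$ is already contained in \eqref{intro8}, so the content lies in the matching upper bound and in the strong asymptotic \eqref{intro14}. My plan is to construct test polynomials from the $L^r$-minimizers of Theorem~\ref{thm10} applied to an enriched measure that sees both $\Gamma$ and the interior of $K$, let $r\to\infty$ diagonally, and then identify the limit via uniqueness in an $H^\infty$-weighted Chebyshev problem on $\Omega$. As a preliminary reduction, upper semicontinuity allows one to approximate $\rho$ from above by continuous weights $\rho^{(m)}\ge\max(\rho,\varepsilon)$ with $\|\cdot\|_\rho\le\|\cdot\|_{\rho^{(m)}}$ and $S(\rho^{(m)},z_0)\to S(\rho,z_0)$ by monotone convergence, justified by the Szeg\H{o} condition (the non-Szeg\H{o} case reduces similarly via truncation from below). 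Hence one may assume $\rho$ is continuous with $0<\varepsilon\le\rho\le\|\rho\|_\infty$ on $K$; the $C^{1+}$ regularity of $\Gamma$ then guarantees that the outer function $R_\rho$ extends continuously and non-vanishingly to $\overline\Omega$.

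Take the enriched measure $\mu_r=\rho^r\,d\omega_{z_0}+\rho^r\,d\nu$, where $\nu$ is normalized $2$-dimensional Lebesgue measure on $\operatorname{int}(K)$. Since $\nu\perp\omega_{z_0}$ and $R_{\rho^r}=R_\rho^r$, Theorems~\ref{thm11}--\ref{thm10} yield $\lambda_n(\mu_r,z_0,r)\to S(\rho,z_0)^rC(K,z_0)^{nr}$ and, for $P_{n,r}:=P_{n,\mu_r,z_0,r}$,
\[
\frac{P_{n,r}(z)}{[C(K,z_0)\Phi_{z_0}(z)]^n}\longrightarrow\frac{R_\rho(z_0)}{R_\rho(z)}
\]
locally uniformly in $\Omega$. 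The continuity of $R_\rho$ on $\overline\Omega$ combined with a Bernstein--Walsh uniform bound upgrades this convergence to uniform convergence on $\Gamma$, and from this one deduces that both $\bigl(\int_\Gamma|P_{n,r}|^r\rho^r\,d\omega_{z_0}\bigr)^{1/r}$ and $\bigl(\int_{\operatorname{int}(K)}|P_{n,r}|^r\rho^r\,d\nu\bigr)^{1/r}$ are bounded by $(1+o_n(1))\,S(\rho,z_0)C(K,z_0)^n$. A diagonal choice $r=r_n\to\infty$ slow enough (e.g.\ $r_n=(\log n)^2$) drives the polynomial Nikolskii factor $n^{\alpha/r_n}$ to $1$, so these $L^{r_n}$-bounds convert into sharp $L^\infty$-bounds for $|P_{n,r_n}\rho|$ on $\Gamma$ and $\operatorname{int}(K)$ respectively, giving $\|P_{n,r_n}\|_\rho\le(1+o(1))S(\rho,z_0)C(K,z_0)^n$. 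Since $P_{n,r_n}(z_0)=1$ (or monic for $z_0=\infty$), this proves \eqref{intro15}.

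For \eqref{intro14}, set $G_n(z)=C(K,z_0)^{-n}\Phi_{z_0}^{-n}(z)\,T_{n,\rho,z_0}(z)$. Each $G_n$ is analytic on $\Omega$ with $G_n(z_0)=1$, and Bernstein--Walsh combined with \eqref{intro15} forces $\{G_n\}$ into a normal family on $\Omega$. Any locally uniform subsequential limit $G$ is analytic on $\Omega$ with $G(z_0)=1$ and $\|G\rho\|_{L^\infty(\Gamma)}\le S(\rho,z_0)$. The $H^\infty$-weighted Chebyshev problem $\inf\{\|F\rho\|_{L^\infty(\Gamma)}:F\in H^\infty(\Omega),\,F(z_0)=1\}$ admits $R_\rho(z_0)/R_\rho$ as its unique minimizer: setting $H:=FR_\rho/R_\rho(z_0)\in H^\infty(\Omega)$, the constraints $|H|\le 1$ on $\Gamma$ a.e.\ and $H(z_0)=1$ force $H\equiv 1$ by inner-outer factorization. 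Hence $G=R_\rho(z_0)/R_\rho$, giving locally uniform convergence; the $H^2(\Omega,\rho^2d\omega_{z_0})$-convergence then follows from the boundary norm convergence $\int_\Gamma|G_n|^2\rho^2\,d\omega_{z_0}\to S(\rho,z_0)^2$ (by dominated convergence using the uniform bound on $|G_n\rho|_\Gamma$) combined with the weak $L^2$-convergence of the boundary values of $G_n$ to those of $G$.

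The principal obstacle is the step that converts the two simultaneous $L^{r_n}$-controls (on $\Gamma$ and on $\operatorname{int}(K)$) into a single sharp $L^\infty$-bound on all of $K$ with the correct leading constant $S(\rho,z_0)C(K,z_0)^n$. This forces the diagonal sequence $r_n\to\infty$ to be chosen delicately enough to beat the polynomial Nikolskii loss, while Theorem~\ref{thm10}'s local uniform convergence must retain its boundary upgrade at $r=r_n$; both requirements rely decisively on the $C^{1+}$ regularity of $\Gamma$ through the continuity of $R_\rho$ on $\overline\Omega$.
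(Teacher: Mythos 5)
Your architecture --- test polynomials built from $L^r$-minimizers of an enriched measure, a diagonal limit $r_n\to\infty$, and identification of the limit through an $H^\infty$ extremal problem --- is genuinely different from the paper's route, which never leaves $r=2$: the paper proves the duality $t_n(\rho,z_0)^2=\max_{\mu\in\mathcal{M}_1(K)}\lambda_n(\rho^2d\mu,z_0,2)$ via Sion's minimax theorem (Proposition \ref{prop7}), shows through a Faber-polynomial submultiplicativity estimate (Lemma \ref{lem3}) and upper semicontinuity in $\mu$ that the optimal prediction measures converge to $\omega_{z_0}$, reads off \eqref{intro15} from Lemma \ref{lem5}, and then obtains \eqref{intro14} from a short reproducing-kernel expansion of $\|C(K,z_0)^{-n}\Phi_{z_0}^{-n}T_{n,\rho,z_0}-R_\rho(z_0)/R_\rho\|^2_{H^2(\Omega,\rho^2d\omega_{z_0})}$.

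The difficulty is that the central step of your upper bound is not justified as written. First, Theorems \ref{thm11} and \ref{thm10} are fixed-$r$ statements: in the proof of Theorem \ref{thm2} the error terms involve $C(m)^r$, the truncation level $m$, and the rate at which $\lambda_m(\Phi_{z_0}^*\mu,w_0,r)$ approaches its limit, all of which depend on $r$; so you cannot set $r=r_n\to\infty$ without reproving the upper bound with $r$-uniform error control, and neither the paper nor your proposal supplies this. Second, converting the $L^{r_n}$ bounds into a sup-norm bound with the \emph{sharp} constant needs a weighted Nikolskii inequality of the form $\|P\rho\|_{L^\infty(\Gamma)}^r\le Cn^\beta\int_\Gamma|P\rho|^r d\omega_{z_0}$ with $C,\beta$ independent of $r$, plus an interior analogue w.r.t.\ area measure that degenerates near $\Gamma$; applying the unweighted inequality to $P$ and using $\varepsilon\le\rho\le\|\rho\|_\infty$ only produces the constant $\|\rho\|_\infty/\varepsilon$, which does not tend to $1$ and destroys the leading term. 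You correctly identify this as ``the principal obstacle,'' but naming it is not resolving it. (A smaller overreach: an outer function whose modulus is merely continuous on a $C^{1+}$ curve need not extend continuously to $\overline\Omega$, so the asserted boundary upgrade of Theorem \ref{thm10}'s locally uniform convergence also lacks justification.) By contrast, your derivation of \eqref{intro14} from \eqref{intro15} --- normal families, the maximum principle applied to $GR_\rho/R_\rho(z_0)$, and norm-plus-weak convergence in $H^2$ --- is sound and close in spirit to the paper's Proposition \ref{prop6}, though the paper's direct kernel computation is shorter.
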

As a consequence of Theorem \ref{thm9} we obtain asymptotics for the polynomial Ahlfors problem. Let  
\begin{align*}
        A_n(z_0) = \inf\{\|P\|_K: P\in \mathcal{P}_n, P(z_0) = 0,P'(z_0)=1\}.
    \end{align*}
\begin{corollary}\label{cor3}
For $z_0\in\bbC\bs K$, and $\rho_{z_0}(z)=|z-z_0|$ we have 
\[
\lim_{n\to\infty}\frac{A_n(z_0)}{C(K,z_0)^{n-1}}=S(\rho_{z_0},z_0).
\]    
\end{corollary}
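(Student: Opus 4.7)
The plan is to reduce the Ahlfors extremal problem to the weighted residual polynomial extremal problem of Theorem~\ref{thm9} via a simple factorization. Observe that $z_0\in\bbC\setminus K$ implies $\Omega\setminus\{\infty\}$ contains $z_0$, since $K$ is a Jordan region (no bounded complementary components). Every $P\in\mathcal{P}_n$ with $P(z_0)=0$ and $P'(z_0)=1$ factors uniquely as $P(z)=(z-z_0)Q(z)$ for some $Q\in\mathcal{P}_{n-1}$; differentiating gives $P'(z_0)=Q(z_0)$, so the constraints $P(z_0)=0,\,P'(z_0)=1$ are equivalent to $Q(z_0)=1$. This gives a bijection between the admissible classes in the two extremal problems.

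Next I would observe that with $\rho_{z_0}(z)=|z-z_0|$,
\[
\|P\|_K = \sup_{z\in K}|z-z_0|\,|Q(z)| = \|Q\|_{\rho_{z_0}},
\]
and taking infima over the admissible classes yields the identity $A_n(z_0) = t_{n-1}(\rho_{z_0},z_0)$. One should also check that $\rho_{z_0}$ is a valid weight in the sense of the paper: since $z_0\notin K$, the function $\rho_{z_0}$ is continuous (hence upper-semicontinuous) and strictly positive on the infinite set $K$. Moreover, $\log\rho_{z_0}$ is bounded on $K$, so $\int_\Gamma \log\rho_{z_0}\,d\omega_{z_0}>-\infty$, and in particular $\rho_{z_0}$ belongs to the Szeg\H{o} class and $S(\rho_{z_0},z_0)>0$.

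With this reduction in hand, Theorem~\ref{thm9} applied to the weight $\rho_{z_0}$ (with $n$ replaced by $n-1$) gives
\[
\lim_{n\to\infty}\frac{A_n(z_0)}{C(K,z_0)^{n-1}}=\lim_{n\to\infty}\frac{t_{n-1}(\rho_{z_0},z_0)}{C(K,z_0)^{n-1}}=S(\rho_{z_0},z_0),
\]
which is exactly the claimed asymptotic. There is no real obstacle beyond confirming the factorization and the weight hypotheses; all the analytic work lies in Theorem~\ref{thm9}. If desired, one could also extract strong asymptotics for the extremal Ahlfors polynomials themselves from the corresponding statement in Theorem~\ref{thm9}, since the factorization $T_{n-1,\rho_{z_0},z_0}(z)(z-z_0)$ yields the $n$-th Ahlfors extremizer, and the convergence of $C(K,z_0)^{-(n-1)}\Phi_{z_0}^{-(n-1)}T_{n-1,\rho_{z_0},z_0}$ to $R_{\rho_{z_0}}(z_0)/R_{\rho_{z_0}}(z)$ transfers directly.
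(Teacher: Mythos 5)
Your proposal is correct and follows essentially the same route as the paper: the factorization $P(z)=(z-z_0)Q(z)$ yielding $A_n(z_0)=t_{n-1}(\rho_{z_0},z_0)$, followed by an application of Theorem~\ref{thm9} to the continuous positive weight $\rho_{z_0}$. Your additional verification that $\rho_{z_0}$ is a Szeg\H{o} class weight (since $\log\rho_{z_0}$ is bounded on the compact set $K$ away from $z_0$) is a detail the paper leaves implicit but is entirely consistent with its argument.
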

We show in Theorem \ref{thm12} that 
$S(\rho_{z_0},z_0)$ is given in terms of the diagonal of the classical Szeg\H o kernel $\frac{1}{1-z\overline{w}}$. In the setting of $H^\infty$ functions, the connection between the maximizing derivative (Ahlfors problem) and  the Szeg\H o kernel was already realized by Garabedian \cite{Gar49}; see also \cite{EY18} for a corresponding result for the polynomial Ahlfors problem. 

 The proof of Theorem \ref{thm9} follows the strategy employed in \cite{CLW24}. It is based on relating $t_n(\rho,z_0)$ to some $\lambda_n(\rho^2d\nu_n,z_0,2)$, for certain extremal measures called optimal prediction measures (OPM). For the weighted case, we prove  in Proposition \ref{prop7} that 
 \begin{align}\label{intro16}
t_n(\rho,z_0)=\sup_{\mu\in\mathcal{M}_1(K)}\lambda_n(\rho^2d\nu_n,z_0,2),     
 \end{align}
where $\mathcal{M}_1(K)$ denotes the set of probability measures supported on $K$. In fact, the supremum is a maximum and any maximizer is called an optimal prediction measure and denoted by $\nu_n$. Thus, \eqref{intro15} follows from 
 \begin{align}\label{intro17}
\lim_n\frac{\lambda_n(\rho^2d\nu_n,z_0,2)}{C(K,z_0)^n}=S(\rho,z_0). 
 \end{align}
In order to derive \eqref{intro17} we prove a certain upper semicontinuity property of $\lambda_n(\rho^2\mu_n,z_0,2)$ for arbitrary varying measures $\mu_n\in\mathcal{M}_1(K)$. Namely in Proposition \ref{lem:99} we show that for arbitrary $\mu_n\in\mathcal{M}_1(K)$, with $\mu_n\to \beta\in\mathcal{M}_1(K)$
\begin{align}\label{intro19}
\limsup_n\frac{\lambda_n(\rho^2d\mu_n,z_0,2)}{C(K,z_0)^n}\leq \sqrt{S(f_{z_0},z_0)}S(\rho_{z_0},z_0)\leq S(\rho_{z_0},z_0),    
\end{align}
 where $\beta=f_{z_0}d\omega_{z_0}+\beta_s$. On the other hand, the extremality property of the particular choice of OPMs $\nu_n$ shows that
 \begin{align}\label{intro18}
S(\rho_{z_0},z_0)\leq\liminf_n\frac{\lambda_n(\rho^2d\nu_n,z_0,2)}{C(K,z_0)^n}.   
 \end{align}
 Combining \eqref{intro19} and \eqref{intro18} yields \eqref{intro17}. Moreover, we show in Lemma \ref{lem5} that the harmonic measure satisfies a extremality property similar to \eqref{intro16} in $H^2(\Omega),$ which implies that $\nu_n\to\omega_{z_0}$.

Prior results on the strong asymptotic for residual polynomials are quite limited. Analogs of Theorem~\ref{thm9} for unweighted residual polynomials were derived in \cite{Yud99,Peh09,BLO21,CSYZ19,ELY24} for various sets $K$ on the real line and in \cite{Eic17} for a circular arc $K$. Recently \eqref{intro15} was proved for unweighted residual polynomials in \cite{CLW24}.
For weighted Chebyshev polynomials, Widom proved a version of Theorem~\ref{thm9} for sets $K$ given by finite disjoint unions of closed Jordan curves of class $C^{2+}$ in \cite{widom}. For sets $K$ on the real line asymptotics of (weighted) Chebyshev polynomials have been studied in \cite{AZ24x,CR24x,CSZ17,widom} and for several other special cases in \cite{BR24,CER24,TD91,Eic17,SZ21}. However, for a general smooth Jordan arc  $K\subset\bbC$ (i.e. a smooth image of an interval) an analog of the strong asymptotics \eqref{intro15}, \eqref{intro14} is currently unknown. Essentially, the only known result is for a circular arc, which was studied in \cite{TD91,Eic17,SZ21}. The corresponding Ahlfor's problem for a circular arc was considered in \cite{EY18}. 
Let $0<\alpha<\pi$ and 
\[
K_\alpha=\{z\in\bbC: |\arg z|\leq \alpha\}.
\]
In \cite{Eic17} an explicit expression for the limit 
\begin{align}\label{intro24}
\lim_{n\to\infty} \frac{t_n(1,z_0)}{C(K_\alpha,z_0)^n} 
\end{align}
is found. That expression turns out to be related to the reproducing kernel of $H^2(\Omega)$ space. To understand the kernel, we need to recall that in contrast to a curve, a Jordan arc $K$, has two edges that we denote by $K_\pm$ leading to two harmonic measures $\omega_{z_0,+}$ and $\omega_{z_0,-}$ for both edges of the arc. Let us note, that the potential theoretic harmonic measure for the compact set $K$ is then given by
\begin{align}\label{intro23}
\omega_{z_0}=\omega_{z_0,+}+\omega_{z_0,-}.
\end{align}
As before, let $\Psi$ be the conformal map from $\D$ into $\Omega$ and $H^2(\Omega)=\{F: F\circ\Psi\in H^2(\D)\}$. Let $\zeta_0\in\D$ and $z_0=\Psi(\zeta_0)$. If we equip $H^2(\bbD)$ with integration with respect to the harmonic measure of $\zeta_0$, defining the norm then the map $F\mapsto F\circ\Psi$ induces the following norm on $H^2(\Omega)$ 
\[
\|F\|^2:=\int_K|F_+|^2d\omega_{z_0,+}+\int_K|F_-|^2d\omega_{z_0,-}.
\]
However, there is another natural norm on $H^2(\Omega)$ corresponding to the harmonic measure \eqref{intro23}
\begin{align*}
    \|F\|^2_{\omega_{z_0}}:=\int_K(|F_+|^2+|F_-|^2)d\omega_{z_0}.
\end{align*}
With this norm, $H^2(\Omega)$ is again a reproducing kernel Hilbert space with reproducing kernel $K_{\omega_{z_0}}$. It is not hard to see and will be presented in the forthcoming master thesis \cite{Buc25} that in the case of a circular arc the limit in \eqref{intro24} is given by
\begin{align}\label{eq:16}
\lim_{n\to\infty} \frac{t_n(1,z_0)}{C(K_\alpha,z_0)^n}=\frac{1}{K_{\omega_{z_0}}(z_0,z_0)}.
\end{align}
In a forthcoming paper \cite{CERZ25x} we will also prove asymptotics of weighted residual polynomials on a circular arc and show that in this case 
\begin{align}\label{intro20}
    \lim_{n\to\infty} \frac{t_n(\rho,z_0)}{C(K_\alpha,z_0)^n}=\frac{S(\rho,z_0)}{K_{\omega_{z_0}}(z_0,z_0)}.
\end{align}
We pose as a conjecture that this formula holds for any sufficiently smooth Jordan arc:
\begin{conjecture}\label{conj}
    Let $K$ be a $C^{1+}$ Jordan arc, $z_0\in \Cinf\backslash K$ and $\rho:K\to [0,\infty)$ upper-semicontinuous. Then
    \eqref{intro20} holds.
\end{conjecture}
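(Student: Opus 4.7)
The plan is to follow the OPM-based strategy of Theorem \ref{thm9}, replacing the curve-Szegő analysis by one reflecting the two-sided boundary structure of the arc.

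First I would invoke the OPM representation, which in the form of Proposition \ref{prop7} applies to any compact $K\subset\bbC$, to write
\[
t_n(\rho,z_0)^{2} \;=\; \sup_{\mu\in\mathcal{M}_1(K)}\lambda_n(\rho^2 d\mu,z_0,2),
\]
attained by some OPM $\nu_n$. Passing to a weakly convergent subsequence $\nu_{n_k}\to\beta$ with decomposition $\beta=f_{z_0}d\omega_{z_0}+\beta_s$ in $\mathcal{M}_1(K)$, the conjecture reduces to identifying
\[
\lim_{k\to\infty}\frac{\sqrt{\lambda_{n_k}(\rho^2 d\nu_{n_k},z_0,2)}}{C(K,z_0)^{n_k}} \;=\; \frac{S(\rho,z_0)}{K_{\omega_{z_0}}(z_0,z_0)}.
\]

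Next I would prove an arc-adapted varying-measure Szegő estimate: for any $\mu_n\in\mathcal{M}_1(K)$ with $\mu_n\to\beta=f_{z_0}d\omega_{z_0}+\beta_s$,
\[
\limsup_{n}\frac{\sqrt{\lambda_n(\rho^2 d\mu_n,z_0,2)}}{C(K,z_0)^{n}} \;\le\; \frac{S(\rho,z_0)\sqrt{S(f_{z_0},z_0)}}{K_{\omega_{z_0}}(z_0,z_0)}.
\]
This is the arc analogue of Proposition \ref{lem:99}. The argument should (i) lift the extremal problem to $\D$ via the conformal map $\Psi$, where each interior point of $K$ has two preimages on $\partial\D$, (ii) build test candidates from Faber-type expansions of the outer function $R_\rho$ from \eqref{eq:14}, and (iii) identify the relevant limiting Hilbert space as the closure of polynomials inside $H^2(\Omega)$ with norm $\|F\|^2_{\omega_{z_0}}$, i.e.\ the symmetric subspace $\{F\in H^2(\Omega):F_+=F_-\text{ on }K\}$, whose reproducing kernel is $K_{\omega_{z_0}}$ as in \eqref{eq:16}. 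The matching lower bound
\[
\liminf_{n}\frac{\sqrt{\lambda_n(\rho^2 d\nu_n,z_0,2)}}{C(K,z_0)^{n}} \;\ge\; \frac{S(\rho,z_0)}{K_{\omega_{z_0}}(z_0,z_0)}
\]
should come from the extremality of the OPM $\nu_n$ against a concrete family of near-optimal polynomials built from Faber expansions of $R_\rho^{-1}$, in the spirit of the derivation of \eqref{intro18}. Combined with the upper bound, and noting that the extremality should force any subsequential weak limit $\beta$ of $\nu_n$ to satisfy $f_{z_0}\equiv 1$ (so that $\beta=\omega_{z_0}$), this yields the conjecture.

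The principal obstacle is the upper bound. On a Jordan curve, polynomials are dense in the relevant weighted $H^2(\Omega,\mu)$, and the Szegő asymptotic is a clean polynomial approximation whose limiting reproducing kernel value at $(z_0,z_0)$ is $S(f_{z_0},z_0)$. On an arc, polynomials live only in the proper symmetric subspace of the enlarged $H^2(\Omega)$, and any quantitative polynomial approximation must both respect this symmetry and remain controlled near the two endpoints of $K$, where the edges merge and $\Psi$ fails to be locally biholomorphic. Pushing the $C^{1+}$ regularity through this endpoint analysis, and verifying that the correct limit is exactly $K_{\omega_{z_0}}(z_0,z_0)$ and not a reproducing kernel of a smaller subspace imposing endpoint vanishing, is the technical heart of the problem; the forthcoming circular-arc treatment in \cite{CERZ25x} together with the kernel computation yielding \eqref{eq:16} in \cite{Buc25} should provide the template.
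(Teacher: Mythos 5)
The statement you are addressing is posed in the paper as Conjecture \ref{conj}; the paper contains no proof of it, and the authors only assert the circular-arc case, deferring even that to the forthcoming works \cite{CERZ25x} and \cite{Buc25}. So there is no proof to compare against, and the relevant question is whether your proposal closes the problem. It does not: it is a roadmap that transplants the proof of Theorem \ref{thm9} to the arc setting, but every step that is specific to the arc is stated as a goal rather than established. The decisive missing ingredient is the arc analogue of the $L^2$ Szeg\H{o} asymptotics (Theorems \ref{thm2} and \ref{thm11}) for varying measures, i.e.\ your ``arc-adapted varying-measure Szeg\H{o} estimate.'' On a curve this rests on $\Phi_{z_0}$ being a diffeomorphism of $\Gamma$ onto $\partial\D$ and on the Faber polynomial estimates of Theorem \ref{thm6}; on an arc the boundary correspondence is two-to-one, the level curves degenerate at the endpoints, and the limit acquires precisely the factor $1/K_{\omega_{z_0}}(z_0,z_0)$ that distinguishes \eqref{intro20} from \eqref{intro15}. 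Producing that factor is the entire content of the conjecture, and your proposal does not derive it.

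Two further points. First, your claim that extremality forces any subsequential limit of the OPMs to be $\omega_{z_0}$ requires an arc analogue of Lemma \ref{lem5}; the paper explicitly defers this to \cite{Buc25x}, and the Jensen-inequality argument used on a curve depends on the formula $W_{2,\infty}(\rho^2 d\mu,z_0)^2=S(\rho,z_0)^2S(f_{z_0},z_0)$ from Lemma \ref{prop3}, which is not available on an arc. Second, there is an internal tension in your step (iii): the paper defines $K_{\omega_{z_0}}$ as the reproducing kernel of all of $H^2(\Omega)$ under the norm $\|\cdot\|_{\omega_{z_0}}$, whereas you identify the limiting space as the proper symmetric subspace obtained by closing the polynomials; the reproducing kernel of a proper closed subspace is in general different, and its diagonal at $z_0$ is smaller, which would change the constant in \eqref{intro20}. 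You flag this yourself as the technical heart, which is the correct diagnosis --- but it means the proposal, as written, is a statement of the difficulty rather than a resolution of it.
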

In the case of a closed Jordan curve $K_{\omega_{z_0}}(z_0,z_0)=1$. Thus, Conjecture \ref{conj} is consistent with Theorem \ref{thm9}. Moreover, the harmonic measure of an arc satisfies a similar extremality property as proved in Lemma \ref{lem5}. For the $L^2$ case, see the forthcoming paper \cite{Buc25x}.

We prove Theorem \ref{thm11} and Theorem \ref{thm10} in Section \ref{s2}. In Section \ref{s3} we derive preliminary results for residual polynomials and prove \eqref{intro16}. 
In Section \ref{s4} we prove Theorem \ref{thm9}. The link to the Ahlfors problem including a proof of Corollary \ref{cor3} is carried out in Section \ref{s5}.  

\subsection*{Acknowledgements} B.B. and B.E. were supported by project P33885-N of the Austrian Science Fund FWF. M.Z. was supported in part by the Simons Foundation Grant MP--TSM--00002651. We thank the American Institute of Mathematics for
hospitality during a recent SQuaRE program, which facilitated the work.

\section{\texorpdfstring{$L^r$}{Lr}-problem for \texorpdfstring{$C^{1+}$}{C1+} Jordan regions}\label{s2}
In this section, we derive asymptotics for $L^r$ extremal polynomials on a Jordan region bounded by a $C^{1+\alpha}$ Jordan curve $\Gamma$ for $0<\alpha<1$. That is, $\Gamma$ is a homeomorphic image of the unit circle in $\bbC$ with a parametrization that is continuously differentiable such that the derivative fulfills a Lipschitz condition with exponent $\alpha$. If we do not want to specify the exponent $\alpha$ we will write $C^{1+}$. First, we will show convergence of the (rescaled) extremal value which is given by the Christoffel function.

In the following, we denote the Riemann sphere by $\Cinf = \bbC \cup \{\infty\}$ and assume $K\subset\bbC$ is a compact set of positive capacity such that $\Omega = \Cinf\backslash K$ is simply connected. 
As before, we denote by $\Phi_{z_0}$ the conformal map from $\Omega$ to $\Cinf\backslash \overline{\D}$ with $\Phi_{z_0}(\infty)=\infty$ and $\Phi_{z_0}(z_0)>0$ if $z_0\ne\infty$ and $\Phi_{z_0}'(z_0)>0$ if $z_0=\infty$. Then, 
\begin{align*}
    C(K,z_0) = \begin{cases}
        1/\Phi_{z_0}(z_0), & z_0\ne \infty, \\
        1/\Phi_\infty'(\infty), & z_0=\infty.
    \end{cases}
\end{align*}
Clearly, $C(K,z_0)$ is a continuous function in $\Omega\backslash \{\infty\}$, for $z_0=\infty$ the quantity $C(K,\infty)$ is the logarithmic capacity of $K$. 

Recall the definition of the Christoffel function in \eqref{intro3}, \eqref{intro4}. Since we have $\mu(K)\geq \lambda_n(\mu,z_0,r) \geq \lambda_{n+1}(\mu,z_0,r)\geq 0$ for all $n\in \N$ we know that the limit $\lambda_\infty(\mu,z_0,r):=\lim_{n\to\infty}\lambda_n(\mu,z_0,r)$ exists. However, in order to get a nontrivial limit, we need to rescale the Christoffel function and define the $L^r$-\emph{Widom factors}
\begin{align*}
    W_{r,n}(\mu,z_0) &= C(K,z_0)^{-n} \lambda_n(\mu,z_0,r)^{1/r}.
\end{align*}
The $L^r$-\emph{Widom factors} are usually considered only for the point $z_0=\infty$. The above definition for other points $z_0$ is a continuous extension to $\Omega$. 
\begin{lemma}\label{lemma:continuous}
$W_{r,n}(\mu,z_0)$ is a continuous function w.r.t.\ $z_0$ on $\Omega$.   
\end{lemma}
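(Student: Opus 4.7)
My strategy is to handle continuity on $\Omega\setminus\{\infty\}$ and at the point $z_0=\infty$ separately, since the rescaling factor $C(K,z_0)^{-n}$ has been designed precisely so that these two pieces match up at infinity. Since $C(K,z_0)$ is already known to be continuous on $\Omega\setminus\{\infty\}$, the first part reduces to continuity of $\lambda_n(\mu,z_0,r)$ in $z_0$. At infinity, using the fact that $\Phi_\infty(z_0)/z_0\to 1/C(K,\infty)$, I would reduce the goal to showing
\begin{align*}
|z_0|^{nr}\lambda_n(\mu,z_0,r)\longrightarrow\lambda_n(\mu,\infty,r)\quad\text{as}\quad z_0\to\infty,
\end{align*}
and verify this via a direct compactness argument in the finite-dimensional space $\mathcal{P}_n$.

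For continuity on $\Omega\setminus\{\infty\}$ I would prove upper and lower semicontinuity of $\lambda_n$ separately. Upper semicontinuity is immediate: given an almost-optimal $P$ at $z_0^*$ with $P(z_0^*)=1$, the normalization $P/P(z_0)$ is admissible at any nearby $z_0$ and $|P(z_0)|\to 1$. For lower semicontinuity, pick minimizers $P_k$ at $z_0^{(k)}\to z_0^*$; the uniform bound on $\int|P_k|^r\,d\mu$ together with positive definiteness of this functional on $\mathcal{P}_n$ (the degenerate case, where $\supp\mu$ has at most $n$ points and $\lambda_n\equiv 0$, is trivial) forces the coefficients of $P_k$ to be bounded. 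Extracting a subsequential limit $P^*\in\mathcal{P}_n$, one has $P^*(z_0^*)=\lim P_k(z_0^{(k)})=1$ by joint continuity of polynomial evaluation, and dominated convergence on the compact set $K$ yields $\int|P^*|^r\,d\mu\le\liminf\int|P_k|^r\,d\mu$, closing the semicontinuity step.

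For continuity at infinity, the upper bound $\limsup|z_0|^{nr}\lambda_n(\mu,z_0,r)\le\lambda_n(\mu,\infty,r)$ is obtained by testing against $P_n^*/P_n^*(z_0)$ where $P_n^*$ is a monic minimizer at $z_0=\infty$, exploiting $P_n^*(z_0)/z_0^n\to 1$. For the matching lower bound I would take minimizers $P_k$ at $z_0^{(k)}\to\infty$ and rescale by $\tilde P_k(z):=(z_0^{(k)})^n P_k(z)$, so that $\tilde P_k(z_0^{(k)})=(z_0^{(k)})^n$ and $\int|\tilde P_k|^r\,d\mu=|z_0^{(k)}|^{nr}\lambda_n(\mu,z_0^{(k)},r)$ is bounded by the upper bound just proved. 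Boundedness of this functional again forces the coefficients $b_j^{(k)}$ of $\tilde P_k$ to be bounded, and dividing the constraint $\sum_{j=0}^n b_j^{(k)}(z_0^{(k)})^j=(z_0^{(k)})^n$ by $(z_0^{(k)})^n$ shows that the lower-order terms drift to zero, pinning down $b_n^{(k)}\to 1$. Any subsequential limit $\tilde P^*$ is therefore monic of degree exactly $n$; dominated convergence gives $\int|\tilde P^*|^r\,d\mu=\lim\int|\tilde P_k|^r\,d\mu$, and $\int|\tilde P^*|^r\,d\mu\ge\lambda_n(\mu,\infty,r)$ closes the loop.

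The main obstacle I anticipate is not a single hard estimate but the bookkeeping around the finite-dimensional compactness: one has to verify that $\int|\cdot|^r\,d\mu$ really controls convergence in $\mathcal{P}_n$ even for $0<r<1$, where it is only a quasi-norm, and that the leading coefficient of the rescaled minimizers genuinely converges to $1$ rather than oscillating while lower-order terms absorb the constraint. Both points are handled by the same compactness argument applied to the unit sphere of $\mathcal{P}_n$, but they are the places where the proof could go astray if stated carelessly.
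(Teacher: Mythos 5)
Your proof is correct and follows essentially the same route as the paper's: upper semicontinuity by testing with the normalized minimizer from the limit point, and lower semicontinuity by a compactness argument for rescaled minimizers in the finite-dimensional space $\mathcal{P}_n$, with the point at infinity handled by pinning down the leading coefficient. Your explicit treatment of the degenerate case, where $\int_K|\cdot|^r\,d\mu$ is only a seminorm on $\mathcal{P}_n$ and hence boundedness of the integrals alone would not control the coefficients, is a detail the paper's compactness step relies on implicitly but does not spell out.
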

\begin{proof}
Fix $z_0\in\Omega$. Using $Q_{n,\zeta}(z)=P_{n,\mu,z_0}(z)/P_{n,\mu,z_0}(\zeta)$ as a trial polynomial for the minimization problem with normalization at $\zeta\in\Omega\setminus\{\infty\}$ we get $\lambda_n(\mu,\zeta,r) \le \lambda_n(\mu,z_0,r)/|P_{n,\mu,z_0}(\zeta)|^r$. If $z_0\ne \infty$, then $P_{n,\mu,z_0}(\zeta)\to 1$ for $\zeta \to z_0$ and $C(K,\zeta)$ is continuous at $z_0$ which implies 
\begin{align}\label{eq:limsup_Wr}
 \limsup_{\zeta\to z_0}W_{r,n}(\mu,\zeta)\le W_{r,n}(\mu,z_0)   
\end{align}
For $z_0= \infty$ the polynomial $P_{n,\mu,\infty}$ is monic and noting that $C(K,\zeta)= |\Phi_\infty(\zeta)|^{-1}$ we get
\begin{align*}
    \limsup_{\zeta\to z_0}W_{r,n}(\mu,\zeta) \leq \limsup_{\zeta\to \infty}\lambda_n(\mu,\infty,r) \frac{|\Phi_\infty(\zeta)|^n}{|P_{n,\mu,\infty}(\zeta)|}= \lambda_n(\mu,\infty,r) \Phi_\infty'(\infty)^n=W_{r,n}(\mu,\infty).
\end{align*}
Thus, \eqref{eq:limsup_Wr} holds for $z_0=\infty$ as well. \\
It remains to show $W_{r,n}(\mu,z_0) \le L:=\liminf_{\zeta\to z_0}W_{r,n}(\mu,\zeta)$. Note that by the first part $L$ is finite. Choose a sequence $\zeta_k\to z_0$ such that $\lim_{k\to\infty}W_{r,n}(\mu,\zeta_k) = L$ and define polynomials $Q_k(z) = C(K,\zeta_k)^{-n}P_{n,\mu,\zeta_k}(z)$, $k\in\N$. All $Q_k$  are of degree at most $n$ and $\|Q_k\|_{L^r(K,\mu)} = W_{r,n}(\mu,\zeta_k)$ are bounded. Since the space of polynomials of degree at most $n$ is finite dimensional, there is a convergent subsequence $Q_{k_j}$ that converges to some polynomial $Q$ w.r.t.\ any norm. In particular, we have $\|Q\|_{L^r(K,\mu)} = L$. In addition, the coefficients of $Q_{k_j}$ converge and hence $Q_{k_j}$ converge locally uniformly on $\bbC$. If $z_0\neq\infty$ we have
$$
Q(z_0)=\lim_{j\to\infty}
Q_{k_j}(\zeta_{k_j}) = \lim_{j\to\infty} C(K,\zeta_{k_j})^{-n} =  C(K,z_0)^{-n}
$$ 
and hence we can use the polynomial $Q(z)C(K,z_0)^n$ as a trial polynomial for the minimization problem with normalization at $z_0$. This shows that $\lambda_n(\mu,z_0,r) \le \|Q\|_{L^r(K,\mu)}^rC(K,z_0)^{nr}$ or equivalently $W_{r,n}(\mu,z_0) \le \|Q\|_{L^r(K,\mu)}=L$. 
If $z_0=\infty$ we denote the leading coefficient of $Q$ by $\gamma$ and note that
$$
|\gamma| = \lim_{j\to\infty}|\zeta_{k_j}^{-n}Q_{k_j}(\zeta_{k_j})| = 
\lim_{j\to\infty}\big[|\zeta_{k_j}|C(K,\zeta_{k_j})\big]^{-n} = C(K,\infty)^{-n}.
$$
Then using $\gamma^{-1}Q(z)$ as a trial polynomial for the minimization problem for monic polynomials shows that $\lambda_n(\mu,\infty,r) \le \|Q\|_{L^r(K,\mu)}^rC(K,\infty)^{nr}$ or equivalently $W_{r,n}(\mu,\infty) \le \|Q\|_{L^r(K,\mu)}=L$. 
\end{proof}

Our first goal is to compute the limit of Widom factors for smooth Jordan curves $\Gamma$. The limit will be given by the entropy integral of the density w.r.t. to the harmonic measure $\omega_{z_0}$ of the domain $\Omega$ and the point $z_0\in \Omega$, defined in Definition \ref{def_ent}. 

We start by deriving a lower bound for the Christoffel function and Widom factors.
\begin{theorem}\label{thm5}
Let $K\subset\bbC$ be a compact set of positive capacity, $z_0\in\Omega$, and $d\mu=f_{z_0}d\omega_{z_0}+d\mu_s$ be a finite measure on $K$. Then for any $n\in\N$, $0<r<\infty$, and any polynomial $P_n$ such that either $P_n(z_0)=1$ and $\deg(P_n)\le n$ if $z_0\neq\infty$ or $P_n$ is a monic polynomial of degree $n$ if $z_0=\infty$,
\begin{align}
    \int_K |P_n|^r d\mu \ge S(f_{z_0},z_0)C(K,z_0)^{nr}.
\end{align}
In particular, $\lambda_n(\mu,z_0,r) \ge S(f_{z_0},z_0)C(K,z_0)^{nr}$ and so $W_{r,n}(\mu,z_0)^r \ge S(f_{z_0},z_0)$.
\end{theorem}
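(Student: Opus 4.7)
The plan is to prove the inequality by combining Jensen's inequality with a subharmonic estimate for $\log|P_n|$ on $\Omega$. First I would discard the singular part: since $d\mu \ge f_{z_0}\,d\omega_{z_0}$ as nonnegative measures,
\begin{align*}
\int_K |P_n|^r\,d\mu \;\ge\; \int_K |P_n|^r f_{z_0}\,d\omega_{z_0}.
\end{align*}
Since $\omega_{z_0}$ is a probability measure and $\log$ is concave, Jensen's inequality (applied in its exponential form so that zeros of $f_{z_0}$ and $P_n$ cause no issue) gives
\begin{align*}
\int_K |P_n|^r f_{z_0}\,d\omega_{z_0} \;\ge\; \exp\!\left(r\int_K \log|P_n|\,d\omega_{z_0}\right)\,\exp\!\left(\int_K \log f_{z_0}\,d\omega_{z_0}\right),
\end{align*}
where the second factor equals $S(f_{z_0},z_0)$ (and the bound is trivial if the Szeg\H o condition fails, since then $S(f_{z_0},z_0)=0$).

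The heart of the argument is therefore to show
\begin{align*}
\int_K \log|P_n|\,d\omega_{z_0} \;\ge\; n\log C(K,z_0).
\end{align*}
For this I would consider the function $u(z) = \log|P_n(z)| - n\,g_K(z,\infty)$ on $\Omega$. It is subharmonic on $\Omega$ (including the point at infinity in the spherical sense, since in the case $z_0\neq\infty$ the polynomial has degree $\le n$ while $n g_K(\cdot,\infty)$ grows like $n\log|z|$ at $\infty$, and in the monic degree-$n$ case the same cancellation occurs). Moreover $u$ is bounded above on $\Omega$ by the Bernstein--Walsh inequality. Since $g_K(\cdot,\infty)$ vanishes quasi-everywhere on $\partial K$, hence $\omega_{z_0}$-almost everywhere, the upper semicontinuous boundary regularization of $u$ coincides $\omega_{z_0}$-a.e.\ with $\log|P_n|$, and the subharmonic maximum principle (or equivalently the fact that $u$ is dominated by the Perron solution with these boundary values) yields
\begin{align*}
u(z_0) \;\le\; \int_K \log|P_n|\,d\omega_{z_0}.
\end{align*}
If $z_0\neq\infty$ the normalization $P_n(z_0)=1$ gives $u(z_0) = -n\,g_K(z_0,\infty) = n\log C(K,z_0)$. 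If $z_0=\infty$ the monic normalization together with the expansion $g_K(z,\infty) = \log|z| - \log C(K,\infty) + o(1)$ at infinity gives $u(\infty) = n\log C(K,\infty)$. Either way the desired bound holds, and combining with the Jensen step finishes the proof.

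The main obstacle is the clean justification of the subharmonic inequality $u(z_0)\le \int \log|P_n|\,d\omega_{z_0}$: one must verify that $g_K(\cdot,\infty)$ integrates to zero against $\omega_{z_0}$ (invoking that polar sets have harmonic measure zero), handle the case $z_0=\infty$ where the subharmonic function is evaluated at the pole of $g_K$, and deal with possible zeros of $P_n$ on $\partial K$ where $\log|P_n|=-\infty$ (harmless, since it only strengthens the bound). Everything else is formal manipulation, and the resulting inequality is valid for every admissible trial $P_n$, so taking the infimum yields the stated lower bound on $\lambda_n(\mu,z_0,r)$ and hence on $W_{r,n}(\mu,z_0)^r$.
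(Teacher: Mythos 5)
Your proof is correct, but it reaches the key estimate $\int\log|P_n|\,d\omega_{z_0}\ge \log|P_n(z_0)|+n\log C(K,z_0)$ by a different route than the paper. The paper factors $P_n(z)=c\prod_{j=1}^m(z-z_j)$ and applies, root by root, the explicit identity $\int\log|\zeta-z_j|\,d\omega_{z_0}(\zeta)=g_K(z_0,z_j)+\log|z_0-z_j|-g_K(z_0,\infty)$ together with $g_K(z_0,z_j)\ge0$ and $C(K,z_0)=e^{-g_K(z_0,\infty)}\le1$; this is completely elementary once that formula is quoted, and it makes visible exactly where the inequality can be strict (namely $\sum_j g_K(z_0,z_j)>0$, i.e.\ roots of $P_n$ lying in $\Omega$). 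You instead package the same information into the single subharmonic function $u=\log|P_n|-n\,g_K(\cdot,\infty)$ and invoke the generalized maximum principle (harmonic majoration with boundary data prescribed quasi-everywhere). Your version is arguably more conceptual and extends verbatim to non-polynomial classes of functions with controlled growth at $\infty$, but it shifts the burden onto correctly citing the extended maximum principle for subharmonic functions bounded above (note that since $g_K\ge0$ one has $u\le\log|P_n|$ on all of $\Omega$, so the boundary $\limsup$ estimate actually holds at \emph{every} boundary point and the quasi-everywhere discussion of $g_K$ vanishing on $\partial K$ can be bypassed). Your verification of subharmonicity at $\infty$, where $u$ behaves like $(n-\deg P_n)\log|w|$ in the local coordinate $w=1/z$, and your treatment of the two normalizations at $z_0$ are both sound, and the Jensen step and the reduction to trial polynomials match the paper exactly.
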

\begin{prf}{Theorem \ref{Thm-LowerBound} and Theorem \ref{thm5}}
In the case $z_0=\infty$ the result is proved in \cite{AZ20}. Here we will consider the case $z_0\in\Omega\setminus\{\infty\}$. 

Let $g_K(z,\infty)$ denote the Green function for the domain $\Omega$ with a logarithmic pole at infinity. Then for each fixed $w\in\bbC$ the function
\begin{align}\label{GrFn}
    g_K(z,w) = \int \log|\zeta-w|\,d\omega_{z}(\zeta) - \log|z-w| + g_K(z,\infty), \quad z\in\Omega,
\end{align}
is nonnegative on $\Omega$ (in fact, it is the Green function on $\Omega$ with a logarithmic pole at $w$ when $w\in\Omega$), see \cite[Sect.II.4]{ST97} or \cite[Sect.3]{NSZ21}.

Now let $P_n(z)=c\prod_{j=1}^m(z-z_j)$, $m\le n$, $c\neq0$. By Jensen's inequality, we have
\begin{align*}
\int |P_n(\zeta)|^r d\mu(\zeta)
& \ge \int |P_n(\zeta)|^r f_{z_0}(\zeta)\,d\omega_{z_0}(\zeta)
\\& \ge \exp\bigg(\int \log|P(\zeta)|^r+\log f_{z_0}(\zeta)\,d\omega_{z_0}(\zeta)\bigg)
\\& = S(f_{z_0},z_0)\,|c|^r\exp\bigg(r\sum_{j=1}^m \int\log|\zeta-z_j|\,d\omega_{z_0}(\zeta)\bigg).
\end{align*}
Then using \eqref{GrFn} and $g(z_0,z_j)\ge0$ we obtain
\begin{align*}
\int\log|\zeta-z_j|\,d\omega_{z_0}(\zeta) = g_K(z_0,z_j)+\log|z_0-z_j|-g_K(z_0,\infty) \ge \log|z_0-z_j|-g_K(z_0,\infty).
\end{align*}
Finally, noting that $C(K,z_0)=1/\Phi_{z_0}(z_0)=\exp[-g_K(z_0,\infty)]\le1$ we conclude
\begin{align*}
\int |P_n(\zeta)|^r d\mu(\zeta)
& \ge S(f_{z_0},z_0)\,|c|^r\exp\bigg(r\sum_{j=1}^m \log|z_0-z_j| - mr g_K(z_0,\infty)\bigg) \\
& = S(f_{z_0},z_0)\,|P(z_0)|^r C(K,z_0)^{mr}
\ge S(f_{z_0},z_0)C(K,z_0)^{nr}.
\end{align*}
\end{prf}

For the upper bound we need the Szeg\H{o} function of a measure supported on the unit circle, see \cite{OPUC}.
\begin{definition}
    Let $dm$ be the normalized Lebesgue measure on $\T=\{z\in \bbC: |z|=1\}$, $\mu$ be a positive and finite measure on $\T$ with the Lebesgue decomposition $\mu= f dm + \mu_s$. The measure $\mu$ satisfies the Szeg\H{o} condition if
    \begin{align}\label{eq:sz_unit}
        \int \log f\, dm >-\infty.
    \end{align}
    We define the Szeg\H{o} function for $z\in \D=\{z\in \bbC: |z|<1\}$
    \begin{align*}
        D(\mu,z)= \exp\left( \frac{1}{2}\int \frac{\zeta+z}{\zeta-z}\log f(\zeta)\, dm(\zeta)\right).
    \end{align*}
    Due to \eqref{eq:sz_unit}, $D(\mu,z)$ is nonzero and analytic in $\D$ with $|D(\mu,\zeta)|^2 = f(\zeta)$ for $\zeta\in \T$ a.e.\\
    If $\mu$ does not satisfy the Szeg\H{o} condition then define $D(\mu,z)\equiv 0$.\\
    Furthermore for $z\in \D,\zeta\in \T$ we define the Poisson-kernel
    \begin{align}\label{eq:3}
        P(z,\zeta)= \text{Re}\left(\frac{\zeta+z}{\zeta-z}\right) = \frac{1-|z|^2}{|\zeta-z|^2}
    \end{align}
    which is the density with respect to $dm$ of the harmonic measure $\omega_\D(z,\cdot)$ of the domain $\D$ and the point $z$. In particular, $dm$ is the harmonic measure of the point $0$.
\end{definition}
The second ingredient that we need for the proof of the upper bound are the Faber polynomials associated with a Jordan curve. The following theorem summarizes the estimates from \cite{suetin} Chapter 4. Note that changing the normalization of the conformal map $\Phi_{z_0}$ does not affect the estimates.
\begin{theorem}\textnormal{see \cite[Chapter 4 Theorem 1 and 2]{suetin}}\label{thm6}
    Let $K\subset\bbC$ be a compact set bounded by a $C^{1+\alpha}$ Jordan curve $\Gamma$ with $0<\alpha<1$. We define for $z\in \bbC$ and $R>1$ such that $z$ is on the inside of the level curve $\Gamma_R = \{w\in \bbC:|\Phi(w)|=R\}$
     \begin{align*}
         F_n(z):= \frac{1}{2\pi i} \int_{\Gamma_R} \frac{\Phi_{z_0}^n(\zeta)}{\zeta-z} d\zeta.
     \end{align*}
     Then, $F_n$ is a polynomial of degree $n$ with leading coefficient $(\Phi'_{z_0}(\infty))^{n}$. We call it the \emph{$n$-th Faber polynomial} associated with $K$. The following estimate holds
     \begin{align*}
         F_n(z) = \Phi_{z_0} (z)^n + \mathcal{O}\left(\frac{\log n}{n^{\alpha}}\right),
     \end{align*}
     uniformly for $z\in \overline{\Omega}$. For any compact set $L$ part of the interior of $K$ we have
     \begin{align*}
         |F_n(z)| \leq \frac{c(L)}{n^{\alpha}}, z\in L
    \end{align*}
    where $c(L)$ is a constant depending on the distance of $L$ to $\Gamma$.
\end{theorem}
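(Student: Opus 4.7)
The statement is attributed to Suetin, so I would proceed as in his text, relying on the Kellogg--Warschawski regularity theorem. Recall that if $\Gamma$ is $C^{1+\alpha}$ then the conformal map $\Phi_{z_0}$ extends to a homeomorphism of $\overline{\Omega}$ onto $\overline{\mathbb{C}}\setminus\D$ which is $C^{1+\alpha}$ up to the boundary, and its derivative is bounded away from $0$ and $\infty$ on $\overline{\Omega}$.

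The plan is to first verify the polynomial property and leading coefficient. For $|\zeta|$ large, $\Phi_{z_0}$ admits a Laurent expansion $\Phi_{z_0}(\zeta) = \Phi'_{z_0}(\infty)\zeta + a_0 + a_{-1}/\zeta + \cdots$; raising to the $n$-th power gives $\Phi_{z_0}^n(\zeta) = P_n(\zeta) + Q_n(\zeta)$ with $P_n$ a polynomial of degree $n$ with leading coefficient $(\Phi'_{z_0}(\infty))^n$ and $Q_n$ analytic in a neighborhood of $\infty$ with $Q_n(\infty)=0$. For $z$ inside $\Gamma_R$ the integral in the definition can be deformed to a large circle $|\zeta|=M$; the $P_n$-part is extracted by Cauchy's formula applied between $\Gamma_R$ and $C_M$, while the $Q_n$-part vanishes in the limit $M\to\infty$ because $Q_n(\zeta)/(\zeta-z)=O(1/\zeta^2)$. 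This identifies $F_n=P_n$ on a dense set and hence everywhere.

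Next I would prove the uniform estimate on $\overline{\Omega}$. The function $F_n-\Phi_{z_0}^n=-Q_n$ is holomorphic in $\Omega$, continuous up to $\Gamma$ by Kellogg--Warschawski, and vanishes at $\infty$; so by the maximum principle it suffices to bound it on $\Gamma$. Shrinking the defining contour to $\Gamma_r$ with $r<|\Phi_{z_0}(z)|$ and passing $r\downarrow 1$, one represents
\[
F_n(z)-\Phi_{z_0}(z)^n = \frac{1}{2\pi i}\int_{\Gamma} \frac{\Phi_{z_0}(\zeta)^n}{\zeta-z}\,d\zeta,
\]
and parametrizing $\Gamma$ by $\zeta(s)=\Psi(e^{is})$ we have $\Phi_{z_0}(\zeta(s))=e^{-is}$ (up to a rotation) so the integrand becomes a highly oscillatory factor $e^{-ins}$ times the Cauchy kernel and a $C^{\alpha}$ amplitude. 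Integration by parts once gains a factor $1/n$, and the $C^{\alpha}$ smoothness of the amplitude together with the classical Privalov/Hardy--Littlewood bound for the Cauchy integral of a Hölder function accounts for the factor $n^{-\alpha}$; the $\log n$ appears from estimating a Hilbert-transform-type contribution near the boundary of the contour where $z\in\Gamma$ sits on the integration path. Combined with the maximum principle this gives the desired $O(\log n / n^{\alpha})$ bound on all of $\overline{\Omega}$.

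The interior estimate on a compact $L\subset \mathrm{int}(K)$ follows by the same integration-by-parts idea but is substantially easier: since $\mathrm{dist}(L,\Gamma)>0$, the factor $1/(\zeta-z)$ and all its derivatives in $\zeta$ are uniformly bounded on $\Gamma$ for $z\in L$, so the $C^{\alpha}$ smoothness of $\zeta'(s)$ and $\theta'(s)$ yields $|F_n(z)|=O(n^{-\alpha})$ with the implicit constant depending only on $\mathrm{dist}(L,\Gamma)$ and the Hölder norms of the parametrization. The main obstacle is clearly the boundary estimate: producing the exact $\log n / n^{\alpha}$ rate requires carefully tracking the singular kernel contribution, which is exactly what Kellogg--Warschawski regularity and Privalov's lemma on Hölder continuity of Cauchy integrals are designed to handle, and this is the non-routine analytic input of the proof.
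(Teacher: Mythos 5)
The paper does not prove this statement at all: Theorem \ref{thm6} is quoted as a summary of Suetin's Chapter 4, Theorems 1 and 2, and the authors use it as a black box. So there is no in-paper argument to compare against; what you have written is essentially a reconstruction of the classical proof from the cited reference, and its overall architecture is sound. In particular, your identification of $F_n$ with the polynomial part $P_n$ of the Laurent expansion of $\Phi_{z_0}^n$ at infinity via contour deformation, the reduction of the boundary estimate to $\Gamma$ by the maximum principle applied to $-Q_n$ (holomorphic in $\Omega$, vanishing at $\infty$, continuous up to $\Gamma$ by Kellogg), and the observation that the interior bound $c(L)/n^{\alpha}$ needs no $\log n$ because the Cauchy kernel is nonsingular for $z\in L$, all match the standard route.

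One step is stated imprecisely and would not survive as written: you say that ``integration by parts once gains a factor $1/n$'' and that the $C^{\alpha}$ smoothness of the amplitude then ``accounts for the factor $n^{-\alpha}$.'' Taken literally this would yield $O(n^{-1-\alpha})$, which is false; and integration by parts is not available anyway, since after parametrizing $\Gamma$ by $\zeta(s)=\Psi(e^{is})$ the amplitude $\zeta'(s)/(\zeta(s)-z)$ is only Hölder continuous, not differentiable. The correct mechanism for the $n^{-\alpha}$ decay is the translation trick for Fourier coefficients of Hölder functions,
\begin{align*}
\int_0^{2\pi} g(s)e^{-ins}\,ds=\tfrac12\int_0^{2\pi}\bigl(g(s)-g(s+\pi/n)\bigr)e^{-ins}\,ds=O\!\left(\|g\|_{C^{\alpha}}\,n^{-\alpha}\right),
\end{align*}
applied after splitting off the singular part of the kernel; the $\log n$ then comes from the principal-value (conjugate-function) contribution when $z$ lies on $\Gamma$, as you correctly anticipate. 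With that correction your sketch is the argument of the cited source.
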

The proof of the upper bound is based on a method used by Geronimus in \cite{geronimus} for $z_0=\infty$. We use the push-forward measure $\Phi_{z_0}^*\mu$, which is defined by $\Phi_{z_0}^*\mu(A)= \mu(\Phi_{z_0}^{-1}(A))$ for $A\subseteq\T$, to relate the Jordan curve to the unit circle where we can use the classical Szeg\H o's theorem (Theorem 2.5.4 in \cite{OPUC}). 
\begin{theorem}\label{thm2} 
Let $K\subset\bbC$ be a compact set bounded by a $C^{1+}$ Jordan curve $\Gamma$. Let $\Omega = \Cinf\bs K$, $z_0\in \Omega$, $\mu=f_{z_0}d\omega_{z_0}+\mu_s$ be a positive and finite Borel measure supported on $K$ with $\mu_s$ singular w.r.t. $\omega_{z_0}$ and $0<r<\infty$. Then
\begin{align*}
    \limsup_{n\to\infty} W_{r,n}(\mu,z_0)^r\leq|D(f_{z_0}\circ \Phi_{z_0}^{-1}\,dm, 1/\Phi_{z_0}(z_0))|^2.
\end{align*}
\end{theorem}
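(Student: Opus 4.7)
The strategy, following Geronimus, is a Faber-polynomial lift of the classical $L^r$ Szeg\H o construction on the unit disk to $\Omega$ via the conformal map $\Phi_{z_0}$. I first reduce to the Szeg\H o case: if $\int_\Gamma\log f_{z_0}\,d\omega_{z_0}=-\infty$, then $D(\tilde f\,dm,\zeta_0)=0$ and the claimed bound is $0$, which one obtains by replacing $f_{z_0}$ by $f_{z_0}+\delta$, applying the Szeg\H o case for the truncated density, and sending $\delta\downarrow 0$. Otherwise, set $\zeta_0:=1/\Phi_{z_0}(z_0)\in\bbD$ (so $\zeta_0=0$ if $z_0=\infty$) and $\tilde f:=f_{z_0}\circ\Phi_{z_0}^{-1}$ on $\T$. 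Since $\Phi_{z_0}$ pushes $\omega_{z_0}$ forward to the harmonic measure $P(\zeta_0,\cdot)\,dm$ on $\T$, a change of variables gives
\[
|D(\tilde f\,dm,\zeta_0)|^2=\exp\!\Big(\int\log\tilde f\cdot P(\zeta_0,\cdot)\,dm\Big)=\exp\!\Big(\int_\Gamma\log f_{z_0}\,d\omega_{z_0}\Big)=S(f_{z_0},z_0),
\]
so the target is to construct trial polynomials witnessing $\limsup W_{r,n}(\mu,z_0)^r\le S(f_{z_0},z_0)$.

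To construct the trial polynomial, let $E$ be the outer function on $\Omega$ with $|E|^r=f_{z_0}$ on $\Gamma$ and $E(z_0)>0$; then $|E(z_0)|^r=S(f_{z_0},z_0)$ by the mean-value property, so $G(z):=E(z_0)/E(z)$ satisfies $G(z_0)=1$ and $|G|^r f_{z_0}\equiv S(f_{z_0},z_0)$ on $\Gamma$. The non-polynomial model $P_n^*(z):=C(K,z_0)^n\Phi_{z_0}(z)^n G(z)$ is normalized with $P_n^*(z_0)=1$ and satisfies $\int_\Gamma|P_n^*|^r f_{z_0}\,d\omega_{z_0}=S(f_{z_0},z_0)C(K,z_0)^{nr}$ exactly. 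Expanding $G(z)=\sum_{k\ge 0}g_k\Phi_{z_0}(z)^{-k}$ as a Taylor series in the local coordinate $1/\Phi_{z_0}$ on $\Omega$ and replacing each nonnegative power of $\Phi_{z_0}$ by the corresponding Faber polynomial, I set
\[
Q_n(z):=\sum_{k=0}^n g_k F_{n-k}(z),\qquad P_n(z):=Q_n(z)/Q_n(z_0),
\]
a polynomial of degree $\le n$ with $P_n(z_0)=1$ (the $z_0=\infty$ case is analogous with the monic normalization). Suetin's estimate $F_j=\Phi_{z_0}^j+\mathcal{O}(\log j/j^\alpha)$ uniformly on $\overline\Omega$, together with control of the Laurent tail $\sum_{k>n}g_k\Phi_{z_0}^{n-k}$ in $L^r(\Gamma,f_{z_0}\,d\omega_{z_0})$ (via the fact that $G$ belongs to the weighted Hardy space $H^r(\Omega,\mu)$), then yields $Q_n\to\Phi_{z_0}^n G$ in $L^r(\Gamma,f_{z_0}\,d\omega_{z_0})$ and $Q_n(z_0)\,C(K,z_0)^n\to G(z_0)=1$, hence
\[
\int_\Gamma|P_n|^r f_{z_0}\,d\omega_{z_0}=C(K,z_0)^{nr}\bigl(S(f_{z_0},z_0)+o(1)\bigr).
\]

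The remaining contribution is $\int|P_n|^r\,d(\mu_s+\mu|_{\operatorname{int}(K)})$, where the integrating measure is singular with respect to $\omega_{z_0}$. On compacts of $\operatorname{int}(K)$, Suetin's interior bound $|F_j|\le c(L)/j^\alpha$ combined with the integral representation of $Q_n$ shows $|Q_n|$ is uniformly bounded there, producing only an $O(C(K,z_0)^{nr})$ bound for the interior contribution. To upgrade this to $o$, and to absorb the singular part on $\Gamma$, I would invoke the standard Szeg\H o device: multiply $P_n$ by an auxiliary polynomial $\pi_m$ of fixed degree $m$ with $\pi_m(z_0)=1$, whose zeros cluster on the $\omega_{z_0}$-null support of $\mu_s+\mu|_{\operatorname{int}(K)}$, arranged so that $\int_\Gamma|\pi_m|^r f_{z_0}\,d\omega_{z_0}\to 1$ while $\int|\pi_m|^r\,d(\mu_s+\mu|_{\operatorname{int}(K)})$ decays faster than $C(K,z_0)^{mr}$ as $m\to\infty$. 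Replacing $P_n$ by $\pi_m\,P_{n-m}$ (still of degree $\le n$) and taking $n\to\infty$ followed by $m\to\infty$ delivers the desired upper bound. The main obstacle is this last step: the construction of $\pi_m$ must simultaneously approximate $1$ in $L^r(\Gamma,f_{z_0}\,d\omega_{z_0})$ and decay on the singular support fast enough to offset the loss $C(K,z_0)^{-mr}$ from the degree shift, which is most delicate when $\operatorname{supp}(\mu|_{\operatorname{int}(K)})$ accumulates toward $\Gamma$.
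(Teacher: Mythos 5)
Your high-level strategy (a Faber-polynomial lift of a circle construction via $\Phi_{z_0}$, plus the identification $|D(f_{z_0}\circ\Phi_{z_0}^{-1}\,dm,1/\Phi_{z_0}(z_0))|^2=S(f_{z_0},z_0)$) is the right one, but the execution has genuine gaps that the paper's argument is specifically structured to avoid. The paper does \emph{not} truncate the Faber/Laurent expansion of the outer function $G$. It fixes $m$, takes the coefficients $a_0,\dots,a_m$ of the $m$-th \emph{extremal polynomial of the pushed-forward measure} $\Phi_{z_0}^*\mu$ on the circle (normalized at $w_0=1/\Phi_{z_0}(z_0)$), and sets $q_n=\sum_{j=0}^m a_jF_{n-j}$. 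This buys three things you lose. First, every Faber index appearing is at least $n-m\to\infty$, so Suetin's interior bound makes $q_n\to0$ uniformly on compacts of $\operatorname{int}(K)$ while $\|q_n\|_K\le C(m)+o(1)$ by the maximum principle; exhausting the interior by compacts then removes the interior contribution completely. Your $Q_n=\sum_{k=0}^n g_kF_{n-k}$ contains low-index Faber polynomials with $n$-dependent weights, and, as you note, you only get an $O(C(K,z_0)^{nr})$ interior bound --- which is the \emph{same order} as the main term, so it is not an error term at all. Second, the boundary integral of the paper's $q_n$ pushes forward to $\lambda_m(\Phi_{z_0}^*\mu,w_0,r)+o(1)$ for the \emph{full} measure including $\mu_s|_\Gamma$, after which the classical circle theorem \cite[Theorem~2.5.4]{OPUC} is invoked as a black box to give $\lambda_m\to|D|^2$ as $m\to\infty$; the singular part and the non-Szeg\H{o} case are absorbed there for free. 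Third, your claim that $Q_n\to\Phi_{z_0}^nG$ in $L^r(\Gamma,f_{z_0}d\omega_{z_0})$ via Taylor truncation is unjustified for $0<r\le1$, where partial sums of $H^r$ functions need not converge in norm (the Riesz projection is unbounded there).

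The step you yourself flag as the main obstacle is fatal as stated: a polynomial $\pi_m$ of fixed degree $m$ has at most $m$ zeros and cannot be made small on an arbitrary closed $\omega_{z_0}$-null set carrying $\mu_s+\mu|_{\operatorname{int}(K)}$, let alone with a decay rate beating the loss $C(K,z_0)^{-mr}$ from the degree shift while remaining $L^r(f_{z_0}d\omega_{z_0})$-close to $1$. No such ``standard device'' exists in this form; killing the singular part is precisely the hard content of Szeg\H{o}'s theorem, and the paper's proof delegates it entirely to the known circle result rather than re-deriving it. To close your argument you would essentially have to replace the truncation-plus-$\pi_m$ scheme by the paper's two-parameter construction (fixed circle degree $m$, Faber lift to degree $n$, then $n\to\infty$, $L\uparrow\operatorname{int}(K)$, $m\to\infty$).
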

\begin{proof}
    Suppose $\Gamma\in C^{1+\alpha}$ for $0<\alpha<1$. First note that $\Phi_{z_0}$ extends to a diffeomorphism from $\overline{\Omega}$ to $\Cinf \backslash \D$ according to Kellogg's Theorem (see \cite{GM08}). Let $w_0= 1/\Phi_{z_0}(z_0)\geq 0$. Fix $m\in \N$. For $n\geq m$ we define $q_n(z)=\sum_{j=0}^m a_j F_{n-j}(z)$ where $F_j$ are the Faber polynomials associated with $K$ and $a_1,\dots,a_m\in \bbC$ such that $\sum_{j=0}^m \overline{a_j} z^j$ is the minimizer of $\lambda_m(\Phi_{z_0}^*\mu,w_0,r)$ satisfying $\sum_{j=0}^m \overline{a_j} w_0^j=1$. Then by Theorem \ref{thm6}
    \begin{align*}
        q_n(z) &= \sum_{j=0}^m a_j F_{n-j}(z) 
        = \sum_{j=0}^m a_j \Phi_{z_0}(z)^{n-j} + \mathcal{O}\left(m\frac{\log n}{(n-m)^{\alpha}}\right) 
        \\
        &= \sum_{j=0}^m a_j \Phi_{z_0}(z)^{n-j} + \mathcal{O}\left(\frac{\log n}{n^{\alpha}}\right)
    \end{align*}
    uniformly for $z\in \overline{\Omega}$. In particular, if $z_0\ne \infty$, then
    \begin{align*}
        \frac{\overline{q_n(z_0)}}{\Phi_{z_0}(z_0)^n} = \frac{ \sum_{j=0}^m \overline{a_j} \Phi_{z_0}(z_0)^{n-j}}{\Phi_{z_0}(z_0)^n} + \frac{1}{\Phi_{z_0}(z_0)^n} \mathcal{O}\left(\frac{\log n}{n^{\alpha}}\right)= \sum_{j=0}^m \overline{a_j}w_0^j + \mathcal{O}\left(\frac{\log n}{n^{\alpha}}\right) = 1+ \mathcal{O}\left(\frac{\log n}{n^{\alpha}}\right). 
    \end{align*}
    Then we can find $N\in \N$ such that $q_n(z_0)\ne 0$ for $n\geq N$. Let $G = K\backslash \Gamma$ be the open interior. For $z\in\Gamma$ we know $|\Phi_{z_0}(z)|=1$. Then the maximum principle shows
    \begin{align*}
        |q_n(z)|\leq \sum_{j=1}^m |a_j| + \mathcal{O}\left(\frac{\log n}{n^{\alpha}}\right).
    \end{align*}
    for $z\in G$. Therefore, we define $C(m)=\sum_{j=1}^m |a_j|$. The estimate can be improved for a compact set $L\subset G$. Theorem \ref{thm6} shows
    \begin{align*}
        |q_n(z)| \leq \frac{c(L)}{n^{\alpha}}, z\in L
    \end{align*}
    where $c(L)$ is a constant depending on the distance of $L$ to $\Gamma$.\\
    For $n\geq N$ and $z_0\ne \infty$ we have
    \begin{align*}
        W_{r,n}(\mu,z_0)^r \leq \frac{\Phi_{z_0}(z_0)^{nr}}{|q_n(z_0)|^r} \int_K \left|q_n(z)\right|^r d\mu(z).
    \end{align*}
    The factor $\frac{\Phi_{z_0}(z_0)^{nr}}{|q_n(z_0)|^r} $ converges to $1$. If $z_0=\infty$ the polynomial $q_n$ has leading coefficient $C(K,\infty)^{-n}$ because $w_0=0$, $a_0=1$ and $F_n$ has leading coefficient $\Phi_\infty'(\infty)^n = C(K,\infty)^{-n}$ by Theorem \ref{thm6}. Then
    \begin{align*}
        W_{r,n}(\mu,z_0)^r \leq   \int_K \left|q_n(z)\right|^r d\mu(z).
    \end{align*}
    Therefore, in both cases, we need to evaluate the integral
\begin{align*}
    \int_K \left|q_n(z)\right|^r d\mu(z) &= \int_L |q_n(z)|^rd\mu(z) + \int_{G\backslash L} |q_n(z)|^r d\mu(z) + \int_\Gamma |q_n(z)|^r d\mu(z) \\
    & \leq \mu(K) \frac{c(L)^r}{n^{r\alpha}} + \mu(G\backslash L) \left(C(m)^r+ \mathcal{O}\left(\frac{\log n}{n^{\alpha}}\right)\right) + \int_\Gamma |q_n(z)|^r d\mu(z)
\end{align*}
We further estimate using the push-forward $\Phi_{z_0}^*\mu$ of the measure $\mu$. 
\begin{align*}
    \int_\Gamma \left|q_n(z)\right|^r d\mu(z)
    &= \int_{\partial \D} |q_n(\Phi_{z_0}^{-1}(\zeta))|^r d\Phi_{z_0}^*\mu(\zeta) 
    =  \int_{\partial \D} \Big|\sum_{j=0}^m a_j \zeta^{n-j}\Big|^rd \Phi_{z_0}^*\mu(\zeta) +  \mathcal{O}\left(\frac{\log n}{n^{\alpha}}\right) \\
    &= \int_{\partial \D} \Big|\sum_{j=0}^m \overline{a_j} \zeta^{j}\Big|^rd \Phi_{z_0}^*\mu(\zeta) +  \mathcal{O}\left(\frac{\log n}{n^{\alpha}}\right) =   \lambda_m(\Phi_{z_0}^*\mu ,w_0, r) + \mathcal{O}\left(\frac{\log n}{n^{\alpha}}\right).
\end{align*}
Then for $n\to \infty$ we get
\begin{align*}
    \limsup_{n\to\infty}W_{r,n}(\mu,z_0)^r \leq  \lambda_m(\Phi_{z_0}^*\mu,w_0,r) + \mu(G\backslash L) C(m)^r.
\end{align*}
Letting $L$ exhaust $G$ and sending $m\to \infty$ proves $\limsup_{n\to\infty}W_{r,n}(\mu,z_0)^r \leq \lambda_\infty(\Phi_{z_0}^*\mu,w_0,r)$. We know
\begin{align}
    \Phi_{z_0}^*\mu = f_{z_0}\circ\Phi_{z_0}^{-1} \Phi_{z_0}^*\omega_{z_0} +\Phi_{z_0}^*\mu_s.
\end{align}
$\Phi_{z_0}$ is a diffeomorphism from $\Gamma$ to $\partial \D$ which implies that $\Phi_{z_0}^*\mu_s$ is singular w.r.t. $dm$ on $\partial \D$. Conformal invariance of the harmonic measure shows that $\Phi_{z_0}^*\omega_{z_0} = \omega_\D(w_0,\cdot)$. Thus the density of $\Phi_{z_0}^*\mu$ with respect to $dm$ is $(f_{z_0}\circ\Phi_{z_0}^{-1})P(w_0,\cdot)$ with the Poisson kernel (\ref{eq:3}). Then Theorem 2.5.4 in \cite{OPUC} shows
\begin{align*}
    \lambda_\infty(\Phi_{z_0}^*\mu,w_0,r) 
    &= \exp \left(\int_\D \log\left( \frac{f_{z_0}(\Phi_{z_0}^{-1}(\zeta))P(w_0,\zeta)}{P(w_0,\zeta)}\right)P(w_0,\zeta)dm(\zeta)\right) \\
    &=|D(f_{z_0}\circ\Phi_{z_0}^{-1}\,dm,1/\Phi_{z_0}(z_0))|^2,
\end{align*}
and we get the upper bound
\begin{align*}
    \limsup_{n\to\infty}W_{r,n}(\mu,z_0)^r\leq |D(f_{z_0}\circ\Phi_{z_0}^{-1}\,dm,1/\Phi_{z_0}(z_0))|^2.
\end{align*}
\end{proof}
\begin{remark}
    For this proof, we used that the Faber polynomials converge to $\Phi_{z_0}^n$ uniformly on $\overline{\Omega}$ and locally uniformly to zero in the interior for Jordan curves $\Gamma\in C^{1+\alpha}$ with $\alpha >0$ but the order of convergence $\mathcal{O}(\frac{\log n}{n^{\alpha}})$ was not important. Therefore, if one has the convergence of the Faber polynomials and that $\Phi_{z_0}$ extends to the boundary, then this theorem holds too.
\end{remark}
We define a Szeg\H{o} function on the Jordan curve according to the upper bound in Theorem \ref{thm2}.
\begin{definition}\label{def3.1}
    Let $K\subset\bbC$ be a compact set bounded by a $C^{1+}$ Jordan curve $\Gamma$. Let $\Omega = \Cinf\bs K$, $z_0\in \Omega$ and $\mu = f_{z_0} d\omega_{z_0} + \mu_s$ a positive and finite measure supported on $K$. We define the \emph{Szeg\H{o}  function} of the density $f_{z_0}$ as
    \begin{align*}
        R_{f_{z_0}}(z):= \overline{D(f_{z_0}\circ \Phi_{z_0}^{-1}\,dm,1/\overline{\Phi_{z_0}(z)})^{2}}.
    \end{align*}
\end{definition}
The Szeg\H o function was already defined in \eqref{eq:14} in a different way. We show below in Proposition \ref{prop4} that the two definitions are equivalent. Note that $R_{f_{z_0}}(\infty)= D(f_{z_0}\circ \Phi_{z_0}^{-1}\,dm,0)^2>0$ if $f_{z_0}\circ\Phi_{z_0}^{-1}\,dm$ satisfies the Szeg\H o condition. 
\begin{proposition}\label{prop4}
    Let $K\subset\bbC$ be a compact set bounded by a $C^{1+}$ Jordan curve $\Gamma$. Let $\Omega = \Cinf\bs K$, $z_0\in \Omega$ and $\mu = f_{z_0} d\omega_{z_0} + \mu_s$ a positive and finite measure supported on $K$. Then, $\Phi_{z_0}^*\mu$ satisfies the Szeg\H{o} condition \eqref{eq:sz_unit} if and only if $\mu$ satisfies the Szeg\H o condition, i.e.
    \begin{align}\label{eq:sz_curve}
        \int_\Gamma \log f_{z_0}\, d\omega_{z_0}>-\infty.
    \end{align}
    This is the case if and only if $R_{f_{z_0}}$ is nonzero and analytic in $\Omega$ with boundary values 
    \begin{align*}
        |R_{f_{z_0}}(z)|=f_{z_0}(z), \text{ for a.e.\ }z\in \Gamma\text{ w.r.t. }\omega_{z_0}.
    \end{align*}
    Furthermore, for $z\in \Omega$
    \begin{align}\label{OuterFn}
        R_{f_{z_0}}(z) &=  \exp\left( \int \log f_{z_0}\,d\omega_z + i* \int \log f_{z_0}\,d\omega_z\right)
\end{align}
where $i* \int \log f_{z_0}\,d\omega_z$ is the harmonic conjugate of the harmonic function with boundary values $\log f_{z_0}$ such that $R_{f_{z_0}}(\infty)>0$. In particular, $|R_{f_{z_0}}(z_0)|=S(f_{z_0},z_0)$.
\end{proposition}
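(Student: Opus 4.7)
My plan is to verify the three assertions in turn by pulling back the classical disc Szeg\H{o} theory via the boundary diffeomorphism $\Phi_{z_0}\colon\Gamma\to\T$ provided by Kellogg's theorem. First I write the Lebesgue decomposition of $\Phi_{z_0}^*\mu$ with respect to $dm$. Conformal invariance of harmonic measure gives $\Phi_{z_0}^*\omega_{z_0}=P(w_0,\cdot)\,dm$ with $w_0=1/\Phi_{z_0}(z_0)\in[0,1)$, while $\Phi_{z_0}^*\mu_s$ remains $dm$-singular. Hence the $dm$-density of $\Phi_{z_0}^*\mu$ is $(f_{z_0}\circ\Phi_{z_0}^{-1})P(w_0,\cdot)$. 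Since $|w_0|<1$, $P(w_0,\cdot)$ is bounded above and below by positive constants on $\T$, so the Szeg\H{o} condition \eqref{eq:sz_unit} for $\Phi_{z_0}^*\mu$ is equivalent to $\int\log(f_{z_0}\circ\Phi_{z_0}^{-1})\,dm>-\infty$. A change of variables identifies the latter with $\int_\Gamma\log f_{z_0}\,d\omega_\infty$, and Harnack's inequality (which controls $\omega_\infty$ by $\omega_{z_0}$ with bounded Radon--Nikodym derivatives) matches this with \eqref{eq:sz_curve}.

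Assuming the Szeg\H{o} condition, $w\mapsto D(f_{z_0}\circ\Phi_{z_0}^{-1}\,dm,w)$ is holomorphic and non-vanishing on $\D$. The map $z\mapsto 1/\overline{\Phi_{z_0}(z)}$ sends $\Omega$ anti-holomorphically into $\D$, so $D(\cdot,1/\overline{\Phi_{z_0}(z)})^2$ is anti-holomorphic in $z$, and therefore $R_{f_{z_0}}$, being its complex conjugate, is holomorphic and non-vanishing on $\Omega$. For $z\in\Gamma$ the point $\zeta:=\Phi_{z_0}(z)$ lies on $\T$, so $1/\overline{\zeta}=\zeta$, and the standard boundary identity $|D|^2=f_{z_0}\circ\Phi_{z_0}^{-1}$ a.e.\ on $\T$ transports via $\Phi_{z_0}$ to $|R_{f_{z_0}}|=f_{z_0}$ a.e.\ on $\Gamma$ with respect to $\omega_{z_0}$.

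For the integral representation I combine the Herglotz formula $D^2(\mu,w)=\exp\bigl(\int\tfrac{\zeta+w}{\zeta-w}\log f\,dm\bigr)$ with $\mathrm{Re}\,\tfrac{\zeta+w}{\zeta-w}=P(w,\zeta)$ to obtain $\log|R_{f_{z_0}}(z)|=\int P(1/\overline{\Phi_{z_0}(z)},\zeta)\log(f_{z_0}\circ\Phi_{z_0}^{-1})(\zeta)\,dm(\zeta)$. The anti-holomorphic involution $\zeta\mapsto 1/\overline{\zeta}$ fixes $\T$ pointwise and identifies the harmonic measure of $\Cinf\setminus\overline{\D}$ at $\Phi_{z_0}(z)$ with that of $\D$ at $1/\overline{\Phi_{z_0}(z)}$, so conformal invariance under $\Phi_{z_0}$ gives $\log|R_{f_{z_0}}(z)|=\int_\Gamma\log f_{z_0}\,d\omega_z$. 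Since $R_{f_{z_0}}$ is holomorphic and non-vanishing on the simply connected $\Omega$, it equals $\exp(u+i\tilde u)$ where $u$ is this harmonic function and $\tilde u$ its harmonic conjugate; the branch is fixed by $R_{f_{z_0}}(\infty)=D(f_{z_0}\circ\Phi_{z_0}^{-1}\,dm,0)^2>0$. Setting $z=z_0$ yields $|R_{f_{z_0}}(z_0)|=\exp\bigl(\int_\Gamma\log f_{z_0}\,d\omega_{z_0}\bigr)=S(f_{z_0},z_0)$. The main obstacle is the bookkeeping between $\Phi_{z_0}$ and $1/\overline{\Phi_{z_0}}$ needed to move between the exterior and interior of $\D$; nothing is genuinely deep once Kellogg's theorem is in hand.
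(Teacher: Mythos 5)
Your proposal is correct and follows essentially the same route as the paper: pushing forward to the circle via Kellogg's theorem, identifying the $dm$-density as $(f_{z_0}\circ\Phi_{z_0}^{-1})P(w_0,\cdot)$, transferring the Szeg\H{o} condition and the boundary values of $D$ back to $\Gamma$, and fixing $R_{f_{z_0}}=\exp(u+i\tilde u)$ by matching moduli and the normalization at $\infty$. The only cosmetic difference is that you compare $\int\log(f_{z_0}\circ\Phi_{z_0}^{-1})\,dm$ with $\int_\Gamma\log f_{z_0}\,d\omega_\infty$ and invoke Harnack, whereas the paper compares directly with $\Phi_{z_0}^*\omega_{z_0}$ using boundedness of the Poisson kernel; both are immediate.
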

\begin{proof}
    In the proof of Theorem \ref{thm2} we have seen that $\Phi_{z_0}^*\omega_{z_0}$ is the harmonic measure $\omega_\D(w_0,\cdot)$ for $w_0 = 1/\Phi_{z_0}(z_0)$ in the unit disc and, thus, $\Phi_{z_0}^*\mu = f_{z_0}\circ \Phi_{z_0}^{-1} P(w_0,\cdot)\,dm + \Phi_{z_0}^*\mu_s$ with the Poisson kernel (\ref{eq:3}). Then, $\Phi_{z_0}^*\mu$ satisfies the Szeg\H{o}  condition if and only if 
    \begin{align*}
        \int_\T\log\left( f_{z_0}\circ \Phi_{z_0}^{-1}P(w_0,\cdot)\right) \,dm>-\infty 
        &\iff \int_\T \log\left( f_{z_0}\circ \Phi_{z_0}^{-1}\right) \,dm>-\infty \\
        &\iff \int_\T \log\left( f_{z_0}\circ \Phi_{z_0}^{-1}\right) \,d\Phi_{z_0}^*\omega_{z_0}>-\infty \\
        &\iff \int_\Gamma \log f_{z_0}\, d\omega_{z_0} > -\infty .
    \end{align*}
    In particular, this is equivalent to $f_{z_0}\circ\Phi_{z_0}^{-1}dm$ satisfying the Szeg\H{o} condition as well, i.e. if and only if $D(f_{z_0}\circ \Phi_{z_0}^{-1}\,dm,\cdot)$ is nonzero and analytic in $\D$. This shows that $R_{f_{z_0}}$ is nonzero and analytic in $\Omega$. Since $D(f_{z_0}\circ \Phi_{z_0}^{-1}\,dm,\cdot)$ has boundary values a.e. w.r.t. the normalized Lebesgue measure $dm$, $R_{f_{z_0}}$ has boundary values for $z\in\Gamma$ a.e. w.r.t. $\omega_{z_0}$ where
    \begin{align*}
        |R_{f_{z_0}}(z)| = f_{z_0}\circ\Phi_{z_0}^{-1}( 1/ \overline{\Phi_{z_0}(z)})= f_{z_0}(z).
    \end{align*}
    We have seen that $\log f_{z_0}$ is integrable with respect to the harmonic measure of the point $z_0$ and therefore all other points $z\in \Omega$ as well. Hence,
    \begin{align}\label{eq:sz_fct}
        R:\Omega\to \bbC, z\mapsto \exp\left(\int \log f_{z_0}\,d\omega_z + i* \int \log f_{z_0}\,d\omega_z\right)
    \end{align}
    defines a nonzero and analytic function. We also know for $z\in \Omega$ that $\Phi_{z_0}^*\omega_z=\omega_\D(1/\overline{\Phi_{z_0}(z)},\cdot)$ because of conformal invariance of the harmonic measure. Then,
    \begin{align*}
        |R_{f_{z_0}}(z)|&= \exp\left(\int_\T \log\left(f_{z_0}\circ \Phi_{z_0}^{-1}\right) P(1/\overline{\Phi_{z_0}(z)},\cdot)\,dm\right) =\exp\left(\int_\T \log\left(f_{z_0}\circ \Phi_{z_0}^{-1}\right)\,d\omega_\D(1/\overline{\Phi_{z_0}(z)},\cdot) \right)\\
        &= \exp\left(\int_\T \log\left(f_{z_0}\circ \Phi_{z_0}^{-1}\right)\,\Phi_{z_0}^*\omega_z \right)=\exp\left(\int \log f_{z_0}\,d\omega_z\right)=|R(z)|
    \end{align*}
    Thus, $R_{f_{z_0}}$ and $R$ are both nonzero and holomorphic functions in $\Omega$ with the same absolute value. By choosing the appropriate harmonic conjugate in \eqref{eq:sz_fct}, we get $R_{f_{z_0}}=R$. 
\end{proof}

\begin{prf}{Theorem \ref{thm11}}
    In Theorem \ref{thm5} we have seen
    \begin{align*}
        W_{r,n}(\mu,z_0)^r\geq S(f_{z_0},z_0).
    \end{align*}
    Theorem \ref{thm2} and Proposition \ref{prop4} show
    \begin{align*}
        \limsup_{n\to\infty}W_{r,n}(\mu,z_0)^r \leq |R_{f_{z_0}}(z_0)| = S(f_{z_0},z_0).
    \end{align*}
    Thus,
    \begin{align*}
        \lim_{n\to\infty} W_{r,n}(\mu,z_0)^r=S(f_{z_0},z_0).
    \end{align*}
\end{prf}

\begin{definition}
Let $K\subset\bbC$ be a compact set bounded by a $C^{1+}$ Jordan curve $\Gamma$ and $\Omega = \Cinf\bs K$. For $0<r\leq\infty$ we define the Hardy space
\begin{align*}
    H^r(\Omega) := \left\{F\in \text{Hol}(\Omega): F \circ\Phi_{\infty}^{-1}(1/\cdot)\in H^r(\D) \right\}
\end{align*}
with the standard Hardy space $H^r(\D)$.  For $z_0\in \Omega$, $\mu=f_{z_0}d\omega_{z_0}+\mu_s$ a positive and finite measure supported on $K$ that satisfies the Szeg\H{o} condition and $0<r<\infty$, then the corresponding Hardy space is defined by
\begin{align*}
    H^r(\Omega,\mu) := \left\{F\in \text{Hol}(\Omega): F R_{f_{z_0}}^{1/r}\in H^r(\Omega) \right\}.
\end{align*}
\end{definition}
Note that a function $F$ belongs to $H^r(\Omega)$ if and only if $|F|^r$ has a harmonic majorant in $\Omega$. The fact that $f_{z_0}$ is integrable w.r.t. $\omega_{z_0}$ implies that $R_{f_{z_0}}^{1/r}\in H^r(\Omega)$ and therefore $H^\infty(\Omega)\subseteq H^r(\Omega,\mu)$. Since $\Phi_{z_0}^{-1}$ extends continuously to $\partial\D$, it easily follows from the definition that $F\in H^r(\Omega)$ has boundary values a.e. with respect to the harmonic measure. Thus, if $F\in H^r(\Omega,\mu)$, then $FR_{f_{z_0}}^{1/r}$ has boundary values a.e. and we define 
 \begin{align*}
\|F\|_{H^r(\Omega,\mu)}^r :=\int_\Gamma \left|FR_{f_{z_0}}^{1/r}\right|^r d\omega_{z_0} = \int_\Gamma |F|^r f_{z_0} d\omega_{z_0}. 
 \end{align*}
Then, $H^r(\Omega,\mu)$ is a Banach space for $r\geq 1$ and a complete metric space for $0<r<1$. In fact, $H^r(\Omega,\mu)$ is isometrically isomorphic to $H^r(\D)$. With $w_0 = 1/\Phi_{z_0}(z_0)\geq 0$, we have already seen that $\Phi_{z_0}^*\omega_{z_0}=\omega_D(w_0,\cdot)$. Thus,
\begin{align*}
    \int_\Gamma \left|FR_{f_{z_0}}^{1/r}\right|^r d\omega_{z_0}= \int_\T \left|(FR_{f_{z_0}}^{1/r})\circ \Phi_{z_0}^{-1}\right|^r d\Phi_{z_0}^*\omega_{z_0}= \int_\T \left|(FR_{f_{z_0}}^{1/r})\circ \Phi_{z_0}^{-1}(\zeta)\right|^r \frac{1-w_0^2}{|1-\zeta w_0|^2}dm(\zeta).
\end{align*}
This shows that the function
    \begin{align*}
        \Psi_{z_0}^r:\begin{cases}
            H^r(\Omega,\mu) &\to H^r(\D),\\
            F &\mapsto (FR_{f_{z_0}}^{1/r})\circ \Phi_{z_0}^{-1}(1/\cdot) \frac{(1-w_0^2)^{1/r}}{(1-zw_0)^{2/r}},
        \end{cases} 
    \end{align*}
    is an isometric isomorphism. 
Since $H^2(\D)$ is a reproducing kernel Hilbert space with reproducing kernel $1/(1-z\overline{w})$, we know that for $r=2$, the Hardy space $H^2(\Omega,\mu)$ is even a reproducing kernel Hilbert space with the reproducing kernel
\begin{align}\label{eq:99}
    K_\mu(z,w) = \frac{1}{\sqrt{R_{f_{z_0}}(z)}\overline{\sqrt{R_{f_{z_0}}(w)}}} \frac{1}{1-\Phi_{z_0}(z_0)^{-2}} \frac{(1-\frac{1}{\Phi_{z_0}(z_0)\Phi_{z_0}(z)})(1-\frac{1}{\Phi_{z_0}(z_0)\overline{\Phi_{z_0}(w)}})}{1- \frac{1}{\Phi_{z_0}(z)\overline{\Phi_{z_0}(w)}}}.
\end{align}
Then 
\begin{align*}
    S(f_{z_0},z_0) = |R_{f_{z_0}}(z_0)| = \frac{1}{K_\mu(z_0,z_0)} = \inf\{\|F\|_{H^2(\Omega,\mu)}^2: F \in H^2(\Omega,\mu),F(z_0)=1\}
\end{align*}
where this infinum is uniquely attained by $F_{\mu,z_0,2} = \frac{K_\mu(\cdot,z_0)}{K_\mu(z_0,z_0)}= \sqrt{\frac{R_{f_{z_0}}(z_0)}{R_{f_{z_0}}(z)}}$. \\
Similarly for $0<r<\infty$, we define $F_{\mu,z_0,r}= \left(\frac{R_{f_{z_0}}(z_0)}{R_{f_{z_0}}(z)}\right)^{1/r}$.
As a main result of this section we prove asymptotics of the extremal $L^r$ polynomials. For the proof we use the Keldysh Lemma, see \cite{Kel85} (in Russian) and \cite{Kal93}, \cite{Sim12} (in English). It states that for a sequence $(f_n)$ in $H^r(\D)$ with $f_n(0)\to 1$ and $\|f_n\|_{H^r(\D)} \to 1$, it holds that $\|f_n-1\|_{H^r(\D)} \to 1$.

\begin{prf}{Theorem \ref{thm10}}
    Suppose $S(f_{z_0},z_0)>0$ and let $P_{n,\mu,z_0,r}$ be the $n$-th minimizing polynomial of $\mu$. Then by the maximum principle $\frac{P_{n,\mu,z_0}}{\Phi_{z_0}^n} \in H^\infty(\Omega)$ and therefore  $\frac{P_{n,\mu,z_0}}{\Phi_{z_0}^n} \in H^r(\Omega,\mu)$.
    We define
    \begin{align*}
        F_n:= \frac{C(K,z_0)^{-n}P_{n,\mu,z_0,r}}{\Phi_{z_0}^n} R_{f_{z_0}}(z_0)^{-1/r} \in H^r(\Omega,\mu).
    \end{align*}
    Then, by Theorem \ref{thm11}
    \begin{align*}
        \|F_n\|_{H^r(\Omega,\mu)}^r &= \int_\Gamma \left|\frac{C(K,z_0)^{-n}P_{n,\mu,z_0,r}}{\Phi_{z_0}^n} R_{f_{z_0}}(z_0)^{-1/r}\right|^r f_{z_0}d\omega_{z_0} \\
        &= C(K,z_0)^{-nr}\int_\Gamma |P_{n,\mu,z_0,r}|^rf_{z_0}d\omega_{z_0} \frac{1}{|R_{f_{z_0}}(z_0)|}\leq  \frac{W_{r,n}(\mu,z_0)^r}{S(f_{z_0},z_0)} \xrightarrow{n\to \infty}1
    \end{align*}
    We define $f_n = \Psi_{z_0}^r(F_n)\in H^r(\D)$. Then $\|f_n\|_{H^r(\D)} = \|F_n\|_{H^r(\Omega,\mu)}$ and with $w_0=1/\Phi_{z_0}(z_0)\geq 0$
    \begin{align*}
        f_n(w_0) = \frac{F_n(z_0)R_{f_{z_0}}(z_0)^{1/r}}{(1-w_0^2)^{1/r}} = (1-w_0^2)^{-1/r}.
    \end{align*}
    We consider the Möbius transformations on the unit disc 
    \begin{align*}
        b_{-w_0}(z) = \frac{z+w_0}{1+zw_0}, b_{w_0}(z) = \frac{z-w_0}{1-zw_0}.
    \end{align*}
    Then $b_{-w_0}$ is an biholomorphic map from $\D\to \D$ with inverse $b_{w_0}$ but also a diffeomorphism on $\T$ with inverse $b_{w_0}$. Then
    \begin{align*}
        \|f_n\|_{H^r(\D)}^r = \int_\T |f_n|^r\,dm = \int_\T |f_n\circ b_{-w_0}|^r |b_{-w_0}'| dm =\|f_n(b_{-w_0}) (b_{-w_0}')^{1/r}\|^r_{H^r(\D)}
    \end{align*}
    We know
    \begin{align*}
        b_{-w_0}'(z) = \frac{1-w_0^2}{(1+zw_0)^2}, b_{w_0}'(z)= \frac{1-w_0^2}{(1-zw_0)^2}.
    \end{align*}
    If we define $g_n = (f_n\circ b_{-w_0})(b_{-w_0}')^{1/r} \in H^r(\D)$, then $g_n(0)= f_n(w_0)(1-w_0^2)^{1/r} = 1$ and $\|g_n\|_{H^r(\D)} = \|f_n\|_{H^r(\D)} \to 1$. The Keldysh Lemma tells us that $\|g_n - 1\|_{H^r(\D)} \to 0$. We calculate
    \begin{align*}
        \int_\T |g_n-1|^r\,dm &= \int_\T|f_n(b_{-w_0})(b_{-w_0}')^{1/r} - 1 |^r \,dm \\
        &= \int_\T |f_n\cdot (b_{-w_0}'(b_{w_0}))^{1/r} - 1|^r |b_{w_0}'| \,dm = \int_\T |f_n - (b_{w_0}')^{1/r}|^r \,dm. 
    \end{align*}
    We also have $(b_{w_0}')^{1/r} = \Psi_{z_0}^r(R_{f_{z_0}}^{-1/r})$. Thus,
    \begin{align*}
        \|F_n - R_{f_{z_0}}^{-1/r}\|_{H^r(\Omega,\mu)} 
        &= \|\Psi_{z_0}^r(F_n) - \Psi_{z_0}^r(R_{f_{z_0}}^{-1/r})\|_{H^r(\D)} \\
        &= \|f_n -(b_{w_0}')^{1/r}\|_{H^r(\D)}= \|g_n-1\|_{H^r(\D)} \xrightarrow{n\to \infty} 0.
    \end{align*}
    Multiplying with the constant $R_{f_{z_0}}(z_0)^{1/r}$ proves the convergence of $\frac{C(K,z_0)^{-n}}{\Phi_{z_0}^n} P_{n,\mu,z_0,r}$ to $F_{\mu,z_0,r}$ in $H^r(\Omega,\mu)$.
    
    Now, we want to prove the locally uniform convergence of the rescaled polynomials. Let $z_1\in \Omega$ and $F\in H^r(\Omega,\mu)$. Then, the least harmonic majorant of $|FR_{f_{z_0}}^{1/r}|^r$ is given by  $\int_\Gamma |F|^r |R_{f_{z_0}}| d\omega_{z}$  and therefore
    \begin{align*}
        |F(z_1)|^r |R_{f_{z_0}}(z_1)|\leq \int_\Gamma |F|^r |R_{f_{z_0}}| d\omega_{z_1} = \int_\Gamma |F|^r f_{z_0}d\omega_{z_1} \leq \tau_\Omega(z_0,z_1) \int_\Gamma |F|^r f_{z_0}d\omega_{z_0}
    \end{align*}
    where $\tau_\Omega(z_0,z_1)$ is the Harnack distance between $z_0$ and $z_1$. Then
    \begin{align*}
        |F(z_1)| \leq \left(\frac{\tau_\Omega(z_0,z_1)}{|R_{f_{z_0}}(z_1)|}\right)^{1/r} \|F\|_{H^r(\Omega,\mu)}.
    \end{align*}
     This proves that point-evaluation at $z_1$ is continuous in $H^r(\Omega, \mu)$ with a constant that is continuous in $z_1$. Thus, convergence in $H^r(\Omega,\mu)$ implies locally uniform convergence in $\Omega$.
\end{prf}

\section{Residual polynomials}\label{s3}

In the last section, we have proved asymptotics of $L^r$ extremal polynomials for $0<r<\infty$. Now we turn to asymptotics of Chebyshev and residual polynomials, corresponding to $r=\infty$. Therefore we consider $t_n(\rho,z_0)$ which is the equivalent of the Christoffel function with a weighted $L^\infty$ norm, see \eqref{intro21} and \eqref{intro22}. In this section, we will provide necessary prerequisites for proving the asymptotics in Section \ref{s4}. 

To simplify notation we introduce the set
\begin{align*}
    \mathcal{P}_{n,z_0} = \begin{cases}
        \{P\in\mathcal{P}_n:P(z_0)=1\}, &\text{ if }z_0\ne\infty, \\
        \{P:P\text{ is a monic polynomial of degree }n\}, &\text{ if }z_0=\infty,
    \end{cases}
\end{align*}
and note that $\mathcal{P}_{n,z_0}$ is a convex subset of $\mathcal{P}_n$. We start by showing that the weighted Chebyshev and residual polynomials exist and are unique.
\begin{lemma}\label{L.2.3}
    Let $K\subset\bbC$ be a compact set, $\rho:K\to[0,\infty)$ an upper-semicontinuous function positive on at least $n+1$ points of $K$, and $z_0\in\Cinf\bs K$. Then there exists a unique $T_{n,\rho,z_0} \in \mathcal{P}_{n,z_0}$ such that
    \begin{align}\label{ResChebNorm}
        \|T_{n,\rho,z_0}\|_\rho = t_n(\rho,z_0).
    \end{align}
\end{lemma}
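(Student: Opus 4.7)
The plan is to split the argument into existence by a finite-dimensional compactness argument and uniqueness by a Chebyshev-style extremal-set analysis. For existence, I would take a minimizing sequence $P_k \in \mathcal{P}_{n,z_0}$ with $\|P_k\|_\rho \to t_n(\rho,z_0)$. Positivity of $\rho$ at $n+1$ distinct points $z_1,\dots,z_{n+1}$ of $K$ gives $|P_k(z_i)| \le \|P_k\|_\rho/\min_i \rho(z_i)$, so after Vandermonde inversion (applied to $P_k - z^n$ rather than $P_k$ in the case $z_0 = \infty$) the coefficients of $P_k$ are bounded, and a subsequence converges to some $T \in \mathcal{P}_{n,z_0}$. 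Since $P \mapsto \|P\|_\rho$ is Lipschitz with respect to the uniform norm on $K$, this yields $\|T\|_\rho = t_n(\rho,z_0)$. I also record at this stage that $t_n(\rho,z_0) > 0$, since any nonzero element of $\mathcal{P}_{n,z_0}$ is a polynomial of degree at most $n$ and therefore cannot vanish at all $n+1$ points where $\rho > 0$.

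For uniqueness, suppose $T_1,T_2 \in \mathcal{P}_{n,z_0}$ both attain $t := t_n(\rho,z_0)$. Convexity of $\mathcal{P}_{n,z_0}$ together with subadditivity of $\|\cdot\|_\rho$ forces $T := \tfrac12(T_1+T_2)$ to be a minimizer as well. I would introduce the extremal set $E := \{z \in K : \rho(z)|T(z)| = t\}$, which is nonempty because $\rho|T|$ is upper-semicontinuous on the compact set $K$ and therefore attains its supremum. Since $t > 0$ we have $\rho > 0$ on $E$, and for each $z \in E$ the equality case of the triangle inequality combined with $|T_1(z)| = |T_2(z)| = t/\rho(z) > 0$ gives $T_1(z) = T_2(z)$. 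Hence once I show $|E| \ge n+1$, the polynomial $T_1 - T_2 \in \mathcal{P}_n$ will vanish at more than $n$ points, forcing $T_1 \equiv T_2$.

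The remaining step --- showing $|E| \ge n+1$ --- is the main obstacle, and I argue by contradiction. Assume $E = \{w_1,\dots,w_m\}$ with $m \le n$. Lagrange interpolation produces a polynomial $h$ in the linear subspace attached to $\mathcal{P}_{n,z_0}$ (namely $\{h \in \mathcal{P}_n : h(z_0) = 0\}$ if $z_0 \ne \infty$, and $\mathcal{P}_{n-1}$ if $z_0 = \infty$) with $h(w_j) = -T(w_j)$ for each $j$; then $T + \epsilon h \in \mathcal{P}_{n,z_0}$, and the plan is to show $\|T + \epsilon h\|_\rho < t$ for sufficiently small $\epsilon > 0$, contradicting minimality of $T$. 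Expanding
\[
\rho(z)^2|T(z)+\epsilon h(z)|^2 = \rho(z)^2|T(z)|^2 + 2\epsilon\rho(z)^2\operatorname{Re}\bigl(\overline{T(z)}h(z)\bigr) + \epsilon^2\rho(z)^2|h(z)|^2
\]
and splitting the analysis into a small neighborhood $V = \bigcup_j V_j$ of $E$ (on which continuity shrinks $\operatorname{Re}(\overline{T}h)$ to at most $-|T(w_j)|^2/2$ around each $w_j$) and its complement (on which $\rho|T| \le t - \eta$ for some $\eta > 0$ by upper-semicontinuity and finiteness of $E$) should deliver the claim. The technical subtlety I anticipate is that $\rho$ may vanish inside $V$, so the middle term is not uniformly bounded away from zero; however at such points the entire expression is identically zero, and since the upper-semicontinuous quantity $\rho|T + \epsilon h|$ attains its supremum on $K$, a case distinction at the maximizer still yields the required strict bound $\|T + \epsilon h\|_\rho < t$.
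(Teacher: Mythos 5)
Your proof is correct and follows essentially the same route as the paper: existence by finite-dimensional compactness, and uniqueness by showing that any minimizer has at least $n+1$ extreme points (via a Lagrange-interpolation perturbation) and then exploiting the equality case of the triangle inequality at the extreme points of the averaged minimizer $\tfrac12(T_1+T_2)$. Your treatment of the perturbation step --- in particular the case distinction at the point where the upper-semicontinuous function $\rho|T+\eps h|$ attains its supremum, which handles the points where $\rho$ vanishes --- is a more detailed version of what the paper simply asserts, namely that $\|T-\eps q_n\|_\rho<\|T\|_\rho$ for sufficiently small $\eps>0$.
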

\begin{proof}
    First we suppose that $z_0\in \bbC$ and introduce the subspace $V_{z_0}=\{P\in \mathcal{P}_n : P(z_0)=0\}$ of $\mathcal{P}_n$ with the norm $\|\cdot\|_\rho$. Then $t_n(\rho,z_0)=\mathrm{dist}(1,V_{z_0})$ admits a minimum $q_n \in V_{z_0}$ and we can set $T_{n,\rho,z_0} = 1-q_n$. For $z_0=\infty$ we define $V_\infty=\mathcal{P}_{n-1}$ as a subspace of $\mathcal{P}_n$ with the norm $\|\cdot\|_\rho$. Then $t_n(\rho,\infty)=\mathrm{dist}(z^n,V_\infty)$ is attained on some polynomial $q_n\in V_\infty$ and we set $T_{n,\rho,\infty} = z^n-q_n$.
        
    Next we show uniqueness of $T_{n,\rho,z_0}$. Since $\rho$ is upper-semicontinuous, the function $\rho(z)|T_{n,\rho,z_0}(z)|$ attains its maximum on $K$ and the maximum is strictly positive since $\rho$ is positive on at least $n+1$ points. A point $z\in K$  is called an extreme point of $T_{n,\rho,z_0}$ if $\rho(z)|T_{n,\rho,z_0}(z)|=t_n(\rho,z_0)$. When $z_0\in\Cinf\bs K$ any $T_{n,\rho,z_0}\in\mathcal{P}_{n,z_0}$ satisfying \eqref{ResChebNorm} has at least $n+1$ extreme points. Indeed, if there are only $m\le n$ extrem points $z_1,\dots,z_m$, then by Lagrange interpolation there exists a polynomial $q_n\in V_{z_0}$ such that $q_n(z_k)=T_{n,\rho,z_0}(z_k)$ for $k=1,\dots,m$ and if $z_0\neq\infty$ also $q_n(z_0)=0$. Then for sufficiently small $\eps>0$ the polynomial $P=T_{n,\rho,z_0}-\eps q_n$ is in $\mathcal{P}_{n,z_0}$ and $\|P\|_\rho<\|T_{n,\rho,z_0}\|_\rho$ contradicting the minimality of $\|T_{n,\rho,z_0}\|_\rho$. 
    Now if $\tilde T_{n,\rho,z_0}\in\mathcal{P}_{n,z_0}$ is another extremal polynomial satisfying \eqref{ResChebNorm}, then so is the polynomial $P=\frac12(T_{n,\rho,z_0}+\tilde T_{n,\rho,z_0})$ since by triangle inequality $\rho(z)|P(z)|\le\frac12\big(\rho(z)|T_{n,\rho,z_0}(z)|+\rho(z)|\tilde T_{n,\rho,z_0}(z)|\big)$. Let $z_1,\dots,z_{n+1}\in K$ be distinct extreme points of $P$. Then $\rho(z_k)|P(z_k)|=t_n(\rho,z_0)$ and since $\rho(z_k)|T_{n,\rho,z_0}(z_k)|$, $\rho(z_k)|\tilde T_{n,\rho,z_0}(z_k)| \le t_n(\rho,z_0)$ equality holds in the above triangle inequality at $z=z_k$. This implies $T_{n,\rho,z_0}(z_k) = \tilde T_{n,\rho,z_0}(z_k)$ for $k=1,\dots,n+1$ and hence $T_{n,\rho,z_0}=\tilde T_{n,\rho,z_0}$ since the polynomials are of degree at most $n$.
\end{proof}

In the following, we will use a similar approach as in \cite{CLW24}. To solve the $L^\infty$ problem we will relate it to an appropriate $L^2$ problem. We denote by $\mathcal{M}_1(K)$ the set of probability measures on a compact set $K$.
\begin{proposition}\label{prop7}
    Let $K$ be a compact set, $\rho:K\to \R$ nonnegative and upper-semicontinuous, and $z_0 \in \Cinf$. Then
    \begin{align*}
        t_n(\rho,z_0)^2=\sup_{\mu \in \mathcal{M}_1(K)} \lambda_n(\rho^2 d\mu,z_0,2).
    \end{align*}
\end{proposition}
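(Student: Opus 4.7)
The plan is to prove the proposition by recasting it as a minimax identity for the bilinear-like quantity
\[
F(P,\mu) \;=\; \int_K \rho^2|P|^2\, d\mu
\]
on the product $\mathcal{P}_{n,z_0}\times\mathcal{M}_1(K)$, and applying Sion's minimax theorem.

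For the easy direction I observe that, since $\rho$ is upper semicontinuous and $P$ is a polynomial, the function $z\mapsto \rho(z)^2|P(z)|^2$ is upper semicontinuous on the compact set $K$, and therefore attains its maximum. Consequently
\[
\sup_{\mu\in\mathcal{M}_1(K)} F(P,\mu) \;=\; \sup_{z\in K}\rho(z)^2|P(z)|^2 \;=\; \|P\|_\rho^2,
\]
the supremum being attained by the Dirac mass at any maximizer. Taking the infimum over $P\in\mathcal{P}_{n,z_0}$ then gives $\inf_P\sup_\mu F(P,\mu) = t_n(\rho,z_0)^2$.

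The essential content of the proposition is the interchange of $\inf_P$ and $\sup_\mu$. I invoke Sion's theorem in the form: if $X$ is convex and compact, $Y$ is convex, and $f:X\times Y\to\mathbb{R}$ satisfies $f(\cdot,y)$ USC and quasi-concave on $X$ for each $y$, and $f(x,\cdot)$ LSC and quasi-convex on $Y$ for each $x$, then $\max_X\inf_Y f = \inf_Y\max_X f$. I apply this with $X=\mathcal{M}_1(K)$, which is weak-$*$ compact by Banach--Alaoglu, $Y=\mathcal{P}_{n,z_0}$, a convex subset of the finite-dimensional space $\mathcal{P}_n$, and $f=F$. For fixed $\mu$, $F(\cdot,\mu)$ is continuous and convex on $Y$ (since $c\mapsto|c|^2$ is convex on $\mathbb{C}$), hence LSC and quasi-convex. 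For fixed $P$, $F(P,\cdot)$ is affine in $\mu$ (hence quasi-concave) and is upper semicontinuous in the weak-$*$ topology; this last point rests on the standard fact that any upper semicontinuous function $g:K\to\mathbb{R}$ can be written as $g=\inf_k g_k$ for a decreasing sequence of continuous $g_k\downarrow g$, so that $\mu\mapsto\int g\,d\mu = \inf_k\int g_k\,d\mu$ is an infimum of weak-$*$ continuous functions.

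Granting these conditions, Sion's theorem yields
\[
\sup_{\mu\in\mathcal{M}_1(K)} \lambda_n(\rho^2\, d\mu,z_0,2) \;=\; \sup_\mu \inf_P F(P,\mu) \;=\; \inf_P \sup_\mu F(P,\mu) \;=\; t_n(\rho,z_0)^2,
\]
which is the claim. The main technical obstacle I anticipate is precisely the USC verification in the $\mu$ variable, due to $\rho$ being only upper semicontinuous rather than continuous; the approximation-from-above trick is what resolves it. As a bonus, the supremum is actually attained, since $\mu\mapsto\lambda_n(\rho^2\,d\mu,z_0,2)$ is an infimum over the finite-dimensional set $\mathcal{P}_{n,z_0}$ of USC functions of $\mu$, and so is itself USC on the compact set $\mathcal{M}_1(K)$.
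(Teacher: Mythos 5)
Your proposal is correct and follows essentially the same route as the paper's proof: the easy direction via Dirac masses at a maximizer of the upper semicontinuous function $\rho^2|P|^2$, followed by Sion's minimax theorem, with weak-$*$ upper semicontinuity of $\mu\mapsto\int\rho^2|P|^2\,d\mu$ established by approximating from above with a decreasing sequence of continuous functions. Your closing remark that the supremum is attained is also exactly how the paper proceeds (it records this as a separate lemma).
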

\begin{proof}
    Let $P\in \mathcal{P}_{n,z_0}$ and $\mu\in \mathcal{M}_1(K)$. Then we have $\int_K |P|^2 \rho^2 d\mu \leq \|P\|_\rho^2 \mu(K)= \|P\|_\rho^2$. On the other hand using the point measure $\delta_x$ with $x\in K$ such that $|P(x)\rho(x)|=\|P\|_\rho$ we see that 
    \begin{align*}
        \sup_{\mu \in \mathcal{M}_1(K)}\int_K |P|^2\rho^2d\mu=\|P\|_\rho^2.
    \end{align*} 
    The map 
    \begin{align*}
        \mathcal{P}_{n,z_0}\times \mathcal{M}_1(K) \ni (P,\mu)\mapsto f(P,\mu):= \int_K |P|^2 \rho^2 d\mu
    \end{align*}
    is convex and continuous in $P$ for every $\mu\in \mathcal{M}_1(K)$. For $P\in \mathcal{P}_{n,z_0}$ it is clear that $f(P,\mu)$ linear in $\mu$. It is also upper-semicontinuous if we equip $\mathcal{M}_1(K)$ with the weak$^*$ topology. Let $\rho_m:K\to \R$ be a monotonously decreasing sequence for $m\in\N$ of continuous functions such that $\rho_m\to \rho$ pointwise. Since $\mu\mapsto \int_K |P|^2\rho_m^2d\mu$ is continuous for every $m\in\N$, we know that
    \begin{align*}
        \mu \mapsto \inf_{m\in\N}\int_K |P|^2\rho_m^2d\mu = \int_K|P|^2\rho^2d\mu = f(P,\mu)
    \end{align*}
    is upper-semicontinuous. Then we can apply Sion's Minmax theorem \cite{sion} to get
    \begin{align*}
    \sup_{\mu \in \mathcal{M}_1(K)} \lambda_n(\rho^2d\mu,z_0,2) = \inf_{P\in \mathcal{P}_{n,z_0}} \sup_{\mu \in \mathcal{M}_1(K)} \int_K |P|^2\rho^2d\mu= \inf_{P\in \mathcal{P}_{n,z_0}} \|P\|_\rho^2 = t_n(\rho,z_0)^2.
    \end{align*}
\end{proof}
We see that we can relate the $L^\infty$-problem $t_n(\rho,z_0)$ to the supremum over probability measures of $L^2$ problems $\lambda_n(\rho^2 d\mu,z_0,2)$. The next lemma shows that it is a maximum.
\begin{lemma}\label{lem2}For $K\subset \bbC$ a compact set, $\rho:K\to \R$ nonnegative and upper-semicontinuous and $z_0\in \Cinf$ the map
\begin{align*}
 \mu \in \mathcal{M}_1(K)\mapsto \lambda_n(\rho^2d\mu,z_0,2)   
\end{align*}
 is upper-semicontinuous. Furthermore, it holds that 
 \begin{align*}
 \sup_{\mu \in \mathcal{M}_1(K)} \lambda_n(\rho^2 d\mu,z_0,2)= \max_{\mu \in \mathcal{M}_1(K)} \lambda_n(\rho^2d\mu,z_0,2).    
 \end{align*}
 If $\mu\in \mathcal{M}_1(K)$ is such a maximizer, it is called an Optimal Prediction Measure (OPM) of order $n$.
\end{lemma}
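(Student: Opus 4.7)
The plan is to deduce both assertions from standard facts about upper-semicontinuous functions on weak-$*$ compact spaces. First, I would fix $P\in \mathcal{P}_{n,z_0}$ and observe that the argument already appearing in the proof of Proposition~\ref{prop7} shows $\mu\mapsto \int_K |P|^2 \rho^2\, d\mu$ is upper-semicontinuous in the weak-$*$ topology on $\mathcal{M}_1(K)$: choosing a monotonically decreasing sequence of continuous functions $\rho_m:K\to\R$ with $\rho_m\searrow \rho$ pointwise, one has $|P|^2\rho^2 = \inf_m |P|^2\rho_m^2$, each map $\mu\mapsto \int_K |P|^2 \rho_m^2\,d\mu$ is weak-$*$ continuous (and uniformly bounded since $|P|^2\rho_m^2$ is bounded on $K$), and monotone convergence renders the pointwise infimum upper-semicontinuous.

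Since $\lambda_n(\rho^2 d\mu, z_0, 2) = \inf_{P\in \mathcal{P}_{n,z_0}} \int_K |P|^2 \rho^2 \,d\mu$ and the pointwise infimum of any family of upper-semicontinuous functions is upper-semicontinuous, the map
\[
\mu \longmapsto \lambda_n(\rho^2 d\mu, z_0, 2)
\]
is upper-semicontinuous on $\mathcal{M}_1(K)$, proving the first assertion.

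For the second assertion, I would invoke compactness. The space $\mathcal{M}_1(K)$ is weak-$*$ compact by the Banach--Alaoglu theorem (as a weak-$*$ closed subset of the unit ball of $C(K)^*$, using compactness of $K$), and every upper-semicontinuous function on a compact space attains its supremum. Combining with Proposition~\ref{prop7}, the supremum defining $t_n(\rho,z_0)^2$ is in fact a maximum, so an OPM of order $n$ exists.

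I expect no serious obstacle: the proof is entirely structural and relies only on (i) approximating a bounded USC function from above by continuous ones, (ii) stability of upper-semicontinuity under arbitrary pointwise infimum, and (iii) weak-$*$ compactness of $\mathcal{M}_1(K)$. The only minor subtlety to keep straight is the direction of the infimum over $P$ (which preserves USC) versus the supremum over $\mu$ (to which we apply compactness), but this is immediate from the formulation of $\lambda_n$ as an infimum over a fixed normalization class.
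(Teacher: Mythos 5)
Your proposal is correct and follows essentially the same route as the paper: upper-semicontinuity of $\mu\mapsto\int_K|P|^2\rho^2\,d\mu$ via the decreasing continuous approximations $\rho_m\searrow\rho$ from the proof of Proposition~\ref{prop7}, stability of upper-semicontinuity under the infimum over $P\in\mathcal{P}_{n,z_0}$, and attainment of the supremum on the weak-$*$ compact set $\mathcal{M}_1(K)$. No issues to report.
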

\begin{proof}
    In the proof of Proposition \ref{prop7} we have seen that for $P\in \mathcal{P}_n$ the map 
    \begin{align*}
        \mu \mapsto \int_K |P|^2 \rho^2 d\mu
    \end{align*}
    is weak$^*$ upper-semicontinuous. $\lambda_n(\rho^2 d\mu,z_0,2)$ is the infimum over $P\in \mathcal{P}_{n,z_0}$ of such maps and, therefore, also upper-semicontinuous. This implies that they admit maxima on the compact set $\mathcal{M}_1(K)$.
\end{proof}

The map from Lemma \ref{lem2} is not only upper-semicontinuous but also continuous if the weight $\rho$ is continuous.
\begin{lemma}\label{lem4} Let $K$ be a compact set, $\rho:K\to \R$ nonnegative and continuous and $z_0\in \Cinf$. Then,
\begin{align*}
 \mu \in \mathcal{M}_1(K)\mapsto \lambda_n(\rho^2d\mu,z_0,2)   
\end{align*}
 is continuous.
\end{lemma}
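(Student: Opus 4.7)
Lemma~\ref{lem2} already provides upper semicontinuity, so the task is to prove lower semicontinuity at an arbitrary $\mu\in\mathcal{M}_1(K)$. The continuity of $\rho$ enters through the observation that if $\mu_k\to\mu$ weak-$*$, then $\nu_k:=\rho^2 d\mu_k$ converges weak-$*$ to $\nu:=\rho^2 d\mu$, and moreover $\int f\,d\nu_k\to\int f\,d\nu$ uniformly in $f$ ranging over any uniformly bounded, equicontinuous family of continuous functions on $K$. In particular, for any fixed norm $\|\cdot\|$ on the finite-dimensional space $\mathcal{P}_n$ and any $R>0$, the family $\{|P|^2:P\in\mathcal{P}_n,\;\|P\|\leq R\}$ is such a family (polynomial coefficients are bounded and derivatives are bounded on $K$), so $P\mapsto\int|P|^2\,d\nu_k$ converges to $P\mapsto\int|P|^2\,d\nu$ uniformly on $\{\|P\|\leq R\}$.

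When $\lambda_n(\rho^2 d\mu,z_0,2)=0$ there is nothing to prove, so I assume the contrary. This forces $\supp\nu$ to contain at least $n+1$ points: otherwise a product of at most $n$ linear factors vanishing on $\supp\nu$, normalized at $z_0$ (or, when $z_0=\infty$, extended by arbitrary additional roots to a monic polynomial of degree $n$), would give an element of $\mathcal{P}_{n,z_0}$ with $\int|P|^2\,d\nu=0$. Consequently $\|P\|_\nu:=(\int|P|^2\,d\nu)^{1/2}$ is a genuine norm on $\mathcal{P}_n$, equivalent to the coefficient norm.

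Pass to a subsequence (not relabeled) along which $\lambda_n(\nu_k,z_0,2)$ tends to the $\liminf$ $L$, and let $P_k\in\mathcal{P}_{n,z_0}$ be the minimizer for $\nu_k$, so that $\|P_k\|_{\nu_k}^2=\lambda_n(\nu_k,z_0,2)\to L$. The crux is the uniform boundedness of $(P_k)$. If $\|P_k\|\to\infty$ along a further subsequence, set $\tilde P_k:=P_k/\|P_k\|$; compactness of the unit sphere in $\mathcal{P}_n$ yields a subsequential limit $\tilde P$ with $\|\tilde P\|=1$. On the one hand
\[
\|\tilde P_k\|_{\nu_k}^2=\frac{\lambda_n(\nu_k,z_0,2)}{\|P_k\|^2}\to 0,
\]
while on the other hand the uniform-convergence principle above applied to the bounded family $\{\tilde P_k\}$ gives $\|\tilde P_k\|_{\nu_k}^2\to\|\tilde P\|_\nu^2$. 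Hence $\|\tilde P\|_\nu=0$, contradicting that $\|\cdot\|_\nu$ is a norm on $\mathcal{P}_n$. Once boundedness is established, extract $P_k\to P^*\in\mathcal{P}_{n,z_0}$ and apply the same principle once more to conclude $\|P_k\|_{\nu_k}^2\to\|P^*\|_\nu^2$, whence $L=\|P^*\|_\nu^2\geq\lambda_n(\nu,z_0,2)$.

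The main obstacle is the uniform boundedness of the minimizing sequence. It is precisely here that positivity of $\lambda_n(\rho^2 d\mu,z_0,2)$—equivalently, non-degeneracy of the quadratic form $\|\cdot\|_\nu$ on $\mathcal{P}_n$—is used decisively, and where the \emph{continuity} (rather than mere upper-semicontinuity) of $\rho$ is essential, since it is what delivers the uniform passage $\int|\tilde P_k|^2\rho^2\,d\mu_k\to\int|\tilde P|^2\rho^2\,d\mu$ that closes the contradiction.
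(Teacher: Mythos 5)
Your argument is correct in substance but takes a genuinely different route from the paper. The paper first observes that $\mu\mapsto\rho^2 d\mu$ is weak$^*$-to-weak$^*$ continuous (this is the only place continuity of $\rho$ enters there), reducing to the unweighted case $\rho\equiv1$; it then quotes \cite[Theorem~2.1]{AZ20b} for $z_0=\infty$ and, for $z_0\in\bbC$, identifies $\lambda_n(d\mu,z_0,2)$ as the squared $L^2(\mu)$-distance from $1$ to the fixed finite-dimensional subspace $\{P\in\mathcal{P}_n:P(z_0)=0\}$, appealing to continuous dependence of the orthogonal projection on $\mu$. You instead give a self-contained compactness argument: upper semicontinuity from Lemma~\ref{lem2}, and lower semicontinuity via uniform convergence of the quadratic forms on bounded subsets of $\mathcal{P}_n$ together with a normalization/contradiction argument that bounds the minimizers. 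What your approach buys is independence from the external reference and a transparent identification of exactly where nondegeneracy of $\|\cdot\|_\nu$ is needed; the cost is length. Your treatment of $z_0=\infty$ and $z_0\in\bbC$ in parallel is fine, since in both cases the normalization constraint is closed under coefficient convergence.

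One step deserves a caveat. The implication ``$\lambda_n(\rho^2d\mu,z_0,2)>0$ forces $\supp\nu$ to contain at least $n+1$ points'' is valid only when $z_0\notin\supp\nu$ (or $z_0=\infty$): if $z_0$ is an atom of $\nu$, your product of linear factors cannot be normalized at $z_0$, and $\lambda_n$ can indeed be positive while $\|\cdot\|_\nu$ is degenerate on $\mathcal{P}_n$. Since the lemma allows arbitrary $z_0\in\Cinf$, hence $z_0\in K$, this is a real gap in that regime --- but it is a gap in the statement as much as in your proof: for $K=[-1,1]$, $\rho\equiv1$, $z_0=0$, $n=2$ and $\mu_k=\tfrac12(\delta_{1/k}+\delta_{-1/k})\to\delta_0$ one has $\lambda_2(\mu_k,0,2)=0$ (take $P(z)=1-k^2z^2$) while $\lambda_2(\delta_0,0,2)=1$, so lower semicontinuity actually fails there, and the paper's projection argument breaks at the same spot (the projection of $1$ onto $\mathrm{span}(z,z^2)$ in $L^2(\mu_k)$ is $k^2z^2$, which does not converge). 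In the only case the paper uses, $z_0\in\Cinf\setminus K$, your argument is complete: degeneracy of $\|\cdot\|_\nu$ then does force $\lambda_n(\nu,z_0,2)=0$, and the rest goes through.
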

\begin{proof}
    Since $\mu \mapsto \rho^2 d\mu$ is weak$^*$-weak$^*$ continuous, it suffices to consider the case $\rho\equiv1$. The result for the case $z_0=\infty$ is proved in \cite[Theorem~2.1]{AZ20b}. For $z_0\in\bbC$,
    the extremal polynomial $P_{n,\mu,z_0,2}$ is the difference of the constant function $1$ and the orthogonal projection of $1$ onto the finite dimensional subspace of polynomials of degree at most $n$ that vanish at $z_0$. Since the orthogonal projection depends continuously on $\mu$ so does $P_{n,\mu,z_0,2}$ and hence also $\lambda_n(d\mu,z_0,2)$.    
\end{proof}

Now we focus again on $K$ bounded by a smooth Jordan curve $\Gamma\in C^{1+}$. Let $\Omega=\Cinf\bs K$. Suppose $\rho:K\to \R$ is nonnegative and upper-semicontinuous. Like before we need to rescale $t_n(\rho,z_0)$ and we define 
\begin{align*}
    W_{\infty,n}(\rho,z_0)&= C(K,z_0)^{-n} t_n(\rho,z_0).
\end{align*}
We call $W_{\infty,n}(\rho,z_0)$ the $L^\infty$ \emph{Widom factor}. Again these are considered mostly for $z_0=\infty$ where the residual polynomial becomes the weighted Chebyshev polynomial of $K$.\\
We have seen that the $L^r$-Widom factors converge for $C^{1+}$ Jordan curves, including $r=2$. Since $W_{\infty,n}(\rho,z_0)$ is the maximum of $W_{2,n}(\rho^2d\mu,z_0)$ over probability measures $\mu\in \mathcal{M}_1(K)$, it is natural to consider which measures maximize the limit $W_{2,\infty}(\rho^2d\mu,z_0)=\lim_{n\to\infty} W_{2,n}(\rho^2d\mu,z_0)$.

\begin{lemma}\label{prop3}
    Let $K\subset\bbC$ be a compact set bounded by a $C^{1+}$ Jordan curve $\Gamma$. Let $\Omega = \Cinf\bs K$, $z_0\in \Omega$ and $\rho:K\to \R$ nonnegative and upper-semicontinuous. Let $\mu=f_{z_0}d\omega_{z_0}+d\mu_s $ be supported on $K$ with singular part $\mu_s$. Then
    \begin{align*}
        W_{2,\infty}(\rho^2 d\mu,z_0)^2 = S(\rho,z_0)^2 S(f_{z_0},z_0).
    \end{align*}
\end{lemma}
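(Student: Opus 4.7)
The plan is to reduce the lemma to Theorem~\ref{thm11} applied to the finite measure $\nu := \rho^2\,d\mu$, and then unwind the definition of the entropy integral.

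First, I would write down the Lebesgue decomposition of $\nu$ with respect to $\omega_{z_0}$. Since $\rho$ is upper-semicontinuous it is bounded and Borel measurable, so
\[
\rho^2\,d\mu = \rho^2 f_{z_0}\,d\omega_{z_0} + \rho^2\,d\mu_s,
\]
and $\rho^2\,d\mu_s$ remains singular with respect to $\omega_{z_0}$ because multiplying a singular measure by a bounded nonnegative Borel function preserves mutual singularity. Thus $\rho^2 f_{z_0}$ is precisely the $\omega_{z_0}$-density of $\nu$. Applying Theorem~\ref{thm11} to $\nu$ then yields
\[
W_{2,\infty}(\rho^2 d\mu,z_0)^2 = \lim_{n\to\infty}\frac{\lambda_n(\nu,z_0,2)}{C(K,z_0)^{2n}} = S(\rho^2 f_{z_0},z_0).
\]

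It remains to verify the factorization $S(\rho^2 f_{z_0},z_0) = S(\rho,z_0)^2\, S(f_{z_0},z_0)$. Formally this is just exponentiation of the additive identity
\[
\int_\Gamma \log(\rho^2 f_{z_0})\,d\omega_{z_0} = 2\int_\Gamma \log\rho\,d\omega_{z_0} + \int_\Gamma \log f_{z_0}\,d\omega_{z_0}.
\]
The only delicate point, which I expect to be the main (mild) obstacle, is ensuring that all three integrals are well-defined and that the identity is unambiguous in the presence of possible $-\infty$ values. For this, I would use that $\omega_{z_0}$ is a finite measure and $\log^+ t\le t$ for $t>0$: since $\rho$ is bounded and $f_{z_0}\in L^1(\omega_{z_0})$ (because $\mu$ is finite), both positive parts $\log^+\rho$ and $\log^+ f_{z_0}$ are $\omega_{z_0}$-integrable. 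Consequently, each of the three integrals takes values in $[-\infty,\infty)$, the additive identity holds with the convention $-\infty+c=-\infty$, and exponentiation gives the claimed product formula with the convention $0\cdot x = 0$.

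Finally, I would note that this covers the degenerate cases consistently with Definition~\ref{def_ent}: $\rho^2 f_{z_0}$ fails the Szeg\H o condition if and only if $\rho$ fails to lie in the Szeg\H o class or $\mu$ fails the Szeg\H o condition, in which case both sides vanish. Combining the two displays completes the proof.
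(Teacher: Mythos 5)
Your proposal is correct and follows essentially the same route as the paper: identify $\rho^2 f_{z_0}$ as the $\omega_{z_0}$-density of $\rho^2 d\mu$, apply Theorem~\ref{thm11} with $r=2$, and factor the entropy integral. The paper states this in one line; your added care about integrability of the positive parts and the $-\infty$ conventions is a harmless (and correct) elaboration of the same argument.
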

\begin{proof}
    This follows immediately from the fact that the density of $\rho^2d\mu$ with respect to the harmonic measure $\omega_{z_0}$ is $\rho^2 f_{z_0}$ and thus Theorem \ref{thm11} shows
    \begin{align*}
        W_{2,\infty}(\rho^2 d\mu,z_0)^2 = S(\rho^2 f_{z_0},z_0)=S(\rho,z_0)^2 S(f_{z_0},z_0).
    \end{align*}
\end{proof}

\begin{lemma}\label{lem5}
    Let $K\subset\bbC$ be a compact set bounded by a $C^{1+}$ Jordan curve $\Gamma$. Let $\Omega = \Cinf\bs K$, $z_0\in \Omega$ and $\rho:K\to \R$ nonnegative and upper-semicontinuous. Suppose $S(\rho,z_0)>0$, then the harmonic measure $\omega_{z_0}$ is the unique maximizer of $W_{2,\infty}(\rho^2\mu,z_0)$ over all probability measures $\mu\in \mathcal{M}_1(K)$ and the maximum is given by 
    \[
    \max_{\mu\in\mathcal M_1(K)}W_{2,\infty}(\rho^2\mu,z_0)=S(\rho,z_0).
    \]
\end{lemma}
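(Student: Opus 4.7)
The plan is to reduce the extremization to a Jensen-inequality computation, using Lemma~\ref{prop3} to handle the $n \to \infty$ limit. Let $\mu \in \mathcal{M}_1(K)$ be arbitrary and write its Lebesgue decomposition with respect to the harmonic measure as $\mu = f_{z_0} d\omega_{z_0} + \mu_s$. Since $\omega_{z_0}$ is supported on $\Gamma \subseteq K$, Lemma~\ref{prop3} immediately gives
\[
W_{2,\infty}(\rho^2 \mu,z_0)^2 = S(\rho,z_0)^2 \, S(f_{z_0},z_0),
\]
so maximizing the left-hand side over $\mu \in \mathcal{M}_1(K)$ is equivalent to maximizing $S(f_{z_0},z_0) = \exp\!\left(\int_\Gamma \log f_{z_0}\, d\omega_{z_0}\right)$.

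Next, I would establish the universal upper bound $S(f_{z_0},z_0) \le 1$. Two facts combine: first, because $\mu_s \ge 0$ and $\mu(K)=1$,
\[
\int_\Gamma f_{z_0}\, d\omega_{z_0} = \mu(K) - \mu_s(K) \le 1,
\]
with equality if and only if $\mu_s = 0$. Second, since $\omega_{z_0}$ is a probability measure on $\Gamma$, Jensen's inequality applied to $\log$ yields
\[
S(f_{z_0},z_0) = \exp\!\left(\int_\Gamma \log f_{z_0}\, d\omega_{z_0}\right) \le \int_\Gamma f_{z_0}\, d\omega_{z_0} \le 1.
\]
Consequently, $W_{2,\infty}(\rho^2 \mu,z_0) \le S(\rho,z_0)$ for every $\mu \in \mathcal{M}_1(K)$.

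For attainment and uniqueness, I would check that $\mu = \omega_{z_0}$ realizes the bound: there $f_{z_0} \equiv 1$ and $\mu_s = 0$, so $S(f_{z_0},z_0) = 1$ and $W_{2,\infty}(\rho^2 d\omega_{z_0},z_0) = S(\rho,z_0)$. For uniqueness, suppose $\mu$ is another maximizer. Then both inequalities above must be equalities. Equality in Jensen forces $f_{z_0}$ to be constant $\omega_{z_0}$-a.e., say $f_{z_0} \equiv c$; equality in the first inequality forces $\mu_s = 0$, so $\mu = c\, \omega_{z_0}$; and $\mu(K)=1$ together with $\omega_{z_0}(\Gamma)=1$ forces $c = 1$, giving $\mu = \omega_{z_0}$.

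There is no real obstacle here; the argument is a one-line application of Lemma~\ref{prop3} followed by Jensen. The only mild care needed is in the uniqueness step, to make sure the equality cases in both inequalities are tracked together (rather than each separately allowing a family of extremizers), and to note that $S(\rho,z_0) > 0$ is used only to ensure that multiplication by $S(\rho,z_0)^2$ is not a trivial identity on zero and so the extremizer is uniquely singled out.
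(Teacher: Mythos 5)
Your proposal is correct and follows essentially the same route as the paper: reduce via Lemma~\ref{prop3} to maximizing the entropy $S(f_{z_0},z_0)$, then apply Jensen's inequality to get $S(f_{z_0},z_0)\le 1-\mu_s(K)\le 1$ with equality forcing $f_{z_0}\equiv 1$ and $\mu_s=0$. Your tracking of the two equality cases together, and the remark on where $S(\rho,z_0)>0$ is needed, are both consistent with (and slightly more explicit than) the paper's argument.
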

\begin{proof}
    Proposition \ref{prop3} tells us that in order to maximize $W_{2,\infty}(\mu,z_0)$ over all probability measures $\mu= f_{z_0}d\omega_{z_0}+\mu_s\in \mathcal{M}_1(K)$ we have to maximize the entropy integral 
    \begin{align*}
        S(f_{z_0},z_0)=\exp\left(\int_\Gamma \log f_{z_0} d\omega_{z_0} \right)\leq \exp\left(\log\left(\int_\Gamma f_{z_0}d\omega_{z_0}\right) \right)= 1-\mu_s(K).
    \end{align*}
    Here we have used Jensen's inequality where equality is attained if and only if $f_{z_0}$ is constant (a.e.). Thus, $f_{z_0}\equiv 1$ and $\mu_s=0$ is the unique maximizer which corresponds to the harmonic measure $\omega_{z_0}$ and $S(\omega_{z_0},z_0)=1$.
\end{proof}
We will see that the maximizing property of the measure $\rho^2 d\omega_{z_0}$ with $ W_{2,\infty}(\rho^2d\omega_{z_0},z_0)=S(\rho,z_0)$ is crucial for the asymptotics of $t_n(\rho,z_0)$. \\
We want to give one more different characterization of the entropy integral of $\rho$. For that we define
\begin{align*}
    H^\infty(\Omega,\rho)=\{F\in \text{Hol}(\Omega): F R_\rho \in H^\infty(\Omega)\}.
\end{align*}
Furthermore we define the norms $\|F\|_{H^\infty(\Omega)}=\sup_{z\in \Omega} |F(z)|$ and $\|F\|_{H^\infty(\Omega,\rho)} = \|FR_\rho\|_{H^\infty(\Omega)}$, as well as the extremal function 
\[
F_{\rho,z_0,\infty} = R_\rho(z_0)/R_\rho \in H^\infty(\Omega,\rho).
\]
\begin{proposition}\label{prop6}
    Let $K\subset\bbC$ be a compact set bounded by a $C^{1+}$ Jordan curve $\Gamma$. Let $\Omega = \Cinf\bs K$, $\rho:K\to \R$ nonnegative and upper-semicontinuous and $z_0\in \Omega$. Then 
    \begin{align}\label{eq:2}
        S(\rho,z_0) = \inf\{\|F\|_{H^\infty(\Omega,\rho)}: F\in H^\infty(\Omega),F(z_0)=1\}.
    \end{align}
    If $S(\rho,z_0)> 0 $, then this infimum is attained if and only if $\sup_{z\in \Gamma}1/\rho(z)<\infty$. In this case, the unique minimizer is $F_{\rho,z_0,\infty}$.
\end{proposition}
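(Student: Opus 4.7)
The plan is to prove both inequalities separately and then handle attainment via the maximum modulus principle combined with the upper-semicontinuity of $\rho$. Throughout I will use Proposition~\ref{prop4}, which gives $|R_\rho(z_0)| = S(\rho,z_0)$ and $|R_\rho(z)| = \exp\bigl(\int \log\rho\, d\omega_z\bigr)$ for $z\in\Omega$. Hence $\rho\le\tilde\rho$ a.e.\ on $\Gamma$ implies $|R_\rho|\le|R_{\tilde\rho}|$ everywhere on $\Omega$.

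\textbf{Lower bound.} For any $F\in H^\infty(\Omega)$ with $F(z_0)=1$, the product $FR_\rho$ lies in $H^\infty(\Omega)$ since $R_\rho$ is bounded (as $\rho$ is), and $\|F\|_{H^\infty(\Omega,\rho)}=\sup_{z\in\Omega}|F(z)R_\rho(z)|\ge|F(z_0)R_\rho(z_0)|=|R_\rho(z_0)|=S(\rho,z_0)$.

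\textbf{Upper bound via approximation.} For each $m\in\mathbb N$ set $\rho_m:=\max(\rho,1/m)$. This is still upper-semicontinuous (the sub-level sets are open as a union of two open sets), and $1/\rho_m\le m$ everywhere on $\Gamma$. Consequently $|R_{\rho_m}(z)|=\exp\bigl(\int\log\rho_m\,d\omega_z\bigr)\ge 1/m$, so $F_m:=R_{\rho_m}(z_0)/R_{\rho_m}$ is holomorphic and bounded on $\Omega$, i.e.\ $F_m\in H^\infty(\Omega)$, with $F_m(z_0)=1$. Because $F_m R_{\rho_m}\equiv R_{\rho_m}(z_0)$ and $\rho\le\rho_m$ gives $|R_\rho|\le|R_{\rho_m}|$ on $\Omega$, I obtain $|F_m R_\rho|\le|R_{\rho_m}(z_0)|=S(\rho_m,z_0)$ pointwise, hence $\|F_m\|_{H^\infty(\Omega,\rho)}\le S(\rho_m,z_0)$. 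The sequence $\log\rho_m$ decreases to $\log\rho$ and is bounded above by $\log(\|\rho\|_\infty+1)$, so monotone convergence yields $S(\rho_m,z_0)\to S(\rho,z_0)$, completing the matching upper bound.

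\textbf{Uniqueness and the attainment dichotomy.} Suppose $F\in H^\infty(\Omega)$, $F(z_0)=1$, realizes the infimum. Then $G:=FR_\rho\in H^\infty(\Omega)$ satisfies $|G(z_0)|=|R_\rho(z_0)|=S(\rho,z_0)=\|G\|_{H^\infty(\Omega)}$, so $G$ attains its supremum at the interior point $z_0$. The maximum modulus principle forces $G\equiv R_\rho(z_0)$, i.e.\ $F=F_{\rho,z_0,\infty}$. Now if $\sup_\Gamma 1/\rho<\infty$, i.e.\ $\rho\ge\varepsilon>0$ pointwise, then $|R_\rho(z)|\ge\varepsilon$ on $\Omega$, so $F_{\rho,z_0,\infty}$ is bounded and lies in $H^\infty(\Omega)$, attaining the infimum. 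Conversely, if a minimizer exists, it must equal $F_{\rho,z_0,\infty}$, so $|R_\rho|\ge\delta>0$ on $\Omega$; taking nontangential boundary values a.e.\ yields $\rho\ge\delta$ a.e.\ w.r.t.\ $\omega_{z_0}$. Since $\omega_{z_0}$ is equivalent to arc length on a $C^{1+}$ Jordan curve and $\{\rho<\delta\}$ is open by upper-semicontinuity, this open set must be empty, giving $\rho\ge\delta$ everywhere and $\sup_\Gamma 1/\rho\le 1/\delta<\infty$.

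The main technical point is this last step—the passage from the a.e.\ lower bound on $\rho$ produced by the boundary behavior of $R_\rho$ to a genuine pointwise lower bound—which is exactly where the upper-semicontinuity hypothesis on $\rho$ is essential. Everything else is a fairly direct application of the maximum principle and standard properties of outer functions.
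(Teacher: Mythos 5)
Your proof is correct and follows essentially the same route as the paper: the lower bound via the maximum principle applied to $FR_\rho$, the upper bound by approximating $\rho$ from above by weights bounded away from zero (your $\max(\rho,1/m)$ versus the paper's $\rho+\epsilon$), and the attainment dichotomy via the equality case of the maximum principle. Your final step--upgrading $\rho\ge\delta$ $\omega_{z_0}$-a.e.\ to $\rho\ge\delta$ everywhere on $\Gamma$ using openness of $\{\rho<\delta\}$ and full support of harmonic measure--is a welcome elaboration of a point the paper leaves implicit (note only the trivial slip that $\{\max(\rho,1/m)<c\}$ is an \emph{intersection}, not a union, of open sets).
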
 
\begin{proof}
Let us denote $\rho_0\equiv 1$. It is a consequence of the maximum principle that for any $F\in H^\infty(\Omega)$ and $z_0\in\Omega$ it holds that
$
|F(z_0)|\leq \|F\|_{H^\infty(\Omega)}
$
with equality if and only if $F$ is a constant. We start by assuming that $S(\rho,z_0)>0$. Let $F\in H^\infty(\Omega)$ with $F(z_0)=1$. Then 
\begin{align}\label{eq:1}
\|F\|_{H^\infty(\Omega,\rho)}=\|F R_{\rho}\|_{H^\infty(\Omega)}\geq |R_{\rho}(z_0)|=S(\rho,z_0)   
\end{align}
with equality if and only if $F R_{\rho}$ is a constant. If $\rho$ is so that $\sup_{z\in\Gamma} 1/\rho(z)<\infty$, then $F_{\rho,z_0,\infty}=(R_{1/\rho}(z_0))^{-1}R_{1/\rho}\in H^\infty(\Omega)$ is the unique minimizer. If $\sup_{z\in\Gamma} 1/\rho(z)=\infty$, define $\rho_\epsilon=\rho+\epsilon$. Then
\begin{align}
S(\rho_{\epsilon},z_0)=\|F_{\rho_\epsilon,z_0,\infty}\|_{H^\infty(\Omega,\rho_\epsilon)} 
&\geq\|F_{\rho_\epsilon,z_0,\infty}\|_{H^\infty(\Omega,\rho)} 
\notag\\ 
&\geq \inf\{\|F\|_{H^\infty(\Omega,\rho)}: F\in H^\infty(\Omega),F(z_0)=1\}\geq S(\rho,z_0).
\end{align}
 Note that the last inequality is trivially true if $S(\rho,z_0)=0$ and if $S(\rho,z_0)>0$ this is exactly \eqref{eq:1}. Since $S(\rho_{\epsilon},z_0)\to S(\rho,z_0)$, as $\epsilon\to 0$, this proves \eqref{eq:2}.
 If $S(\rho,z_0)>0$, then equality is attained in \eqref{eq:1}, if and only if $F R_{\rho}$ is a constant. Thus, if the infimum is attained, we can conclude that $1/R_{\rho}\in H^\infty(\Omega),$ which implies that $\sup_{z\in\Gamma} 1/\rho(z)<\infty$.
\end{proof}

\begin{corollary}\label{cor1}
   Let $K\subset\bbC$ be a compact set bounded by a $C^{1+}$ Jordan curve $\Gamma$. Let $\Omega = \Cinf\bs K$, $z_0\in \Omega$ and $\rho:K\to \R$ nonnegative and upper-semicontinuous. Let $\nu_n$ be an OPM of order $n$. Then
    \begin{align*}
        W_{2,n}(\rho^2 d\nu_n,z_0) \geq S(\rho,z_0).
    \end{align*}
\end{corollary}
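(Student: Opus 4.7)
The plan is to combine the extremality property defining the OPM $\nu_n$ with the universal lower bound of Theorem~\ref{thm5}; no new machinery is required.

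First, since $\nu_n$ is an OPM of order $n$ (see Lemma~\ref{lem2}), it maximizes the functional $\mu\mapsto \lambda_n(\rho^2 d\mu,z_0,2)$ over $\mathcal{M}_1(K)$. The harmonic measure $\omega_{z_0}$ is a probability measure supported on $\Gamma\subset K$ (by Frostman's theorem), so it is a legitimate competitor and we obtain
\begin{align*}
\lambda_n(\rho^2 d\nu_n,z_0,2) \ge \lambda_n(\rho^2 d\omega_{z_0},z_0,2).
\end{align*}

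Second, I would apply Theorem~\ref{thm5} to the measure $\rho^2 d\omega_{z_0}$. Its Lebesgue decomposition with respect to $\omega_{z_0}$ has density $\rho^2$ and no singular part, so the theorem yields
\begin{align*}
\lambda_n(\rho^2 d\omega_{z_0},z_0,2) \ge S(\rho^2,z_0)\,C(K,z_0)^{2n}.
\end{align*}
From Definition~\ref{def_ent} and $\log\rho^2=2\log\rho$ one reads off $S(\rho^2,z_0)=S(\rho,z_0)^2$ (both sides equal zero when $\rho$ fails the Szeg\H{o} condition).

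Putting the two inequalities together and dividing by $C(K,z_0)^{2n}$ gives $W_{2,n}(\rho^2 d\nu_n,z_0)^2\ge S(\rho,z_0)^2$, and taking the square root finishes the argument. There is no genuine obstacle here: the statement is essentially a one-line consequence of the OPM's extremality and the universal lower bound already in hand, and the case $S(\rho,z_0)=0$ is automatic from $W_{2,n}\ge 0$.
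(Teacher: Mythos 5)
Your proof is correct, but it takes a genuinely different route from the paper's. You exploit only the defining maximality of the OPM against the single competitor $\omega_{z_0}\in\mathcal{M}_1(K)$ and then invoke the universal lower bound of Theorem~\ref{thm5} for the measure $\rho^2 d\omega_{z_0}$ (whose density w.r.t.\ $\omega_{z_0}$ is $\rho^2$ with no singular part), together with the elementary identity $S(\rho^2,z_0)=S(\rho,z_0)^2$; all of these steps are sound, and the boundedness of $\rho$ guarantees that $\rho^2 d\omega_{z_0}$ is a finite measure so the theorem applies. The paper instead passes through the $L^\infty$ problem: by Proposition~\ref{prop7} the extremality of $\nu_n$ gives $W_{2,n}(\rho^2 d\nu_n,z_0)=C(K,z_0)^{-n}t_n(\rho,z_0)=C(K,z_0)^{-n}\|T_{n,\rho,z_0}\|_\rho$, and the lower bound $t_n(\rho,z_0)\ge S(\rho,z_0)C(K,z_0)^n$ is then obtained from the $H^\infty$ extremal characterization of $S(\rho,z_0)$ in Proposition~\ref{prop6} applied to $Q_n=C(K,z_0)^{-n}T_{n,\rho,z_0}/\Phi_{z_0}^n$. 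Your argument is more elementary in that it stays entirely within the $L^2$ framework and needs neither Proposition~\ref{prop6} nor the residual polynomial $T_{n,\rho,z_0}$; what the paper's route buys is the explicit identification of $W_{2,n}(\rho^2 d\nu_n,z_0)$ with the $L^\infty$ Widom factor $W_{\infty,n}(\rho,z_0)$ and, in passing, the weighted Bernstein--Walsh-type bound \eqref{intro8}, both of which are used again when the asymptotics are transferred back to $t_n(\rho,z_0)$ in Corollary~\ref{thm1}.
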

\begin{proof}
    If $S(\rho,z_0)=0$ this is true. Therefore, let us suppose $S(\rho,z_0)>0$. Then, we see that $Q_n = C(K,z_0)^{-n} T_{n,\rho,z_0} / \Phi_{z_0}^n$ is in $ H^\infty(\Omega)$ and satisfies $Q_n(z_0)=1$ where $T_{n,\rho,z_0}$ is the residual polynomial for the weight $\rho$. Then Proposition \ref{prop6} and the maximum principle show
    \begin{align*}
         S(\rho,z_0) \leq \|Q_n\|_{H^\infty(\Omega,\rho)} \leq \esssup_{z\in \Gamma} |C(K,z_0)^{-n} T_{n,\rho,z_0}(z) / \Phi_{z_0}(z)^n R_\rho(z)|.
    \end{align*}
    Since $R_\rho = \rho$ on $\Gamma$ a.e. we have
    \begin{align*}
        \esssup_{z\in \Gamma} |C(K,z_0)^{-n} T_{n,\rho,z_0}(z) / \Phi_{z_0}(z)^n R_\rho(z)| \leq C(K,z_0)^{-n}\sup_{z\in K} |\rho (z)T_{n,\rho,z_0}(z)| .
    \end{align*}
    With Proposition \ref{prop7} we get
    \begin{align*}
        W_{2,n}(\rho^2 d\nu_n,z_0)=C(K,z_0)^{-n}t_n(\rho,z_0) = C(K,z_0)^{-n}\|T_{n,\rho,z_0}\|_\rho  \geq S(\rho,z_0).
    \end{align*}
\end{proof}

\section{Asymptotics}\label{s4}
In this section we will prove asymptotics of the extremal polynomials $T_{n,\rho,z_0}$ and extremal value $t_n(\rho,z_0)$. 
We need to generalize Lemma 5.1 from \cite{CLW24} slightly by considering measures supported in the whole Jordan region $K$ that may be finitely supported. 
\begin{lemma}\label{lem3}Let $K\subset\bbC$ be a compact set bounded by a $C^{1+\alpha}$ Jordan curve $\Gamma$ with $0<\alpha<1$. Let $\Omega = \Cinf\bs K$, $z_0\in \Omega$, $\mu$ be a positive and finite measure supported on $K$ and $n<N$. Then there exists $c_n\geq 1$ such that
    \begin{align*}
        W_{2,N}(\mu,z_0)\leq c_n W_{2,N-n}(\mu,z_0),  \quad c_n = 1+ \mathcal{O}\left(\frac{\log n}{n^\alpha}\right) \text{ as }n\to \infty
    \end{align*}
    where $c_n$ is independent of $\mu$ and $N$.
\end{lemma}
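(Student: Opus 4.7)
The plan is to pass from the degree $(N-n)$ minimizer to a degree-$N$ trial polynomial by multiplying by a polynomial that closely tracks $\Phi_{z_0}^n$ on $K$, namely the $n$-th Faber polynomial $F_n$ from Theorem~\ref{thm6}. Let $P_{N-n}:=P_{N-n,\mu,z_0,2}$. In the case $z_0\in\Omega\setminus\{\infty\}$, I would set $Q_n(z):=F_n(z)/F_n(z_0)$; this is well-defined for all large $n$ since $F_n(z_0)=\Phi_{z_0}(z_0)^n+O(\log n/n^\alpha)$ and $\Phi_{z_0}(z_0)>1$, and then $\tilde P:=P_{N-n}Q_n\in\mathcal{P}_{N,z_0}$ with $\tilde P(z_0)=1$. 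In the case $z_0=\infty$, Theorem~\ref{thm6} identifies the leading coefficient of $F_n$ as $\Phi_\infty'(\infty)^n=C(K,\infty)^{-n}$, so $\tilde P(z):=C(K,\infty)^n F_n(z)P_{N-n}(z)$ is monic of degree $N$ and is a valid trial for $\lambda_N(\mu,\infty,2)$.

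In both cases I would estimate
\[
\lambda_N(\mu,z_0,2)\le\int_K|\tilde P|^2\,d\mu\le\|Q_n\|_{L^\infty(K)}^2\,\lambda_{N-n}(\mu,z_0,2),
\]
where in the $z_0=\infty$ case one sets $Q_n:=C(K,\infty)^nF_n$. The sup-norm of $Q_n$ is controlled by Theorem~\ref{thm6}: since $|\Phi_{z_0}|=1$ on $\Gamma$, the Faber approximation $F_n=\Phi_{z_0}^n+O(\log n/n^\alpha)$ on $\overline\Omega$ gives $\|F_n\|_{L^\infty(\Gamma)}\le 1+O(\log n/n^\alpha)$, and the maximum modulus principle applied to the polynomial $F_n$ on the Jordan region $K$ upgrades this to $\|F_n\|_{L^\infty(K)}\le 1+O(\log n/n^\alpha)$. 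When $z_0\neq\infty$ the lower bound $|F_n(z_0)|=\Phi_{z_0}(z_0)^n(1+o(1))$ for large $n$ yields $\|Q_n\|_{L^\infty(K)}\le C(K,z_0)^n(1+O(\log n/n^\alpha))$, and the same bound holds trivially when $z_0=\infty$.

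Taking square roots in the displayed inequality and dividing by $C(K,z_0)^N$ then produces $W_{2,N}(\mu,z_0)\le c_nW_{2,N-n}(\mu,z_0)$ with $c_n=1+O(\log n/n^\alpha)$; the constant depends only on $K$ (through the Faber polynomial estimates) and hence is independent of $\mu$ and $N$, as required. Small values of $n$ for which $F_n(z_0)$ might vanish or the estimate is not yet effective can be absorbed by enlarging $c_n$, since the claim only asks for $c_n\ge1$ with the stated asymptotic. There is no serious obstacle; the only point requiring care is the $z_0=\infty$ case, where the precise leading coefficient of $F_n$ from Theorem~\ref{thm6} must be used to guarantee that the trial polynomial is monic.
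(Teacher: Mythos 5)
Your proposal is correct and follows essentially the same route as the paper: multiply the degree-$(N-n)$ minimizer by the $n$-th Faber polynomial (suitably normalized at $z_0$, or to be monic when $z_0=\infty$), bound $\|F_n\|_K$ via the Faber estimate on $\Gamma$ and the maximum principle, and divide out the capacity factors. The paper's proof is the same argument, with $c_n$ written explicitly as $\max\bigl(1,\ |\Phi_{z_0}(z_0)|^{n}\|F_n\|_K/|F_n(z_0)|\bigr)$ in the finite case and $\max(1,\|F_n\|_K)$ at infinity.
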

\begin{proof}
    Let $F_n$ again be the $n$-th Faber polynomial associated with $K$. Then $F_n$ is a polynomial of degree $n$ with leading coefficient $(\Phi_{z_0}'(\infty))^{-n}$ and satisfies
    \begin{align}\label{eq:4}
        F_n(z)= \Phi_{z_0}(z)^n +\mathcal{O}\left(\frac{\log n}{n^\alpha}\right)
    \end{align}
    uniformly for $z\in \overline{\Omega}$ by Theorem \ref{thm6}. We note that
    \begin{align*}
        W_{2,N}(\mu,z_0)\leq c_n W_{2,N-n}(\mu,z_0) \iff C(K,z_0)^{-2n}\lambda_N(\mu,z_0,2) \leq c_n^2 \lambda_{N-n}(\mu,z_0,2).
    \end{align*}
    If $z_0\ne \infty$, then
    \begin{align*}
        \lambda_N(\mu,z_0,2) &= \min_{p\in \mathcal{P}_N,p(z_0)=1} \int_K |p|^2 d\mu \\
        &\leq \min_{p\in \mathcal{P}_{N-n},p(z_0)=1} \frac{1}{|F_n(z_0)|^2}\int_K |F_n p|^2 d\mu\\
        &\leq \frac{\|F_n\|^2_K}{|F_n(z_0)|^2} \lambda_{N-n}(\mu,z_0,2).
    \end{align*}
    For $z_0 = \infty$ we have
    \begin{align*}
        \lambda_N(\mu,z_0,2) &= \min_{p\text{ monic}, \deg(p)=N} \int_K |p|^2 d\mu \\
        &\leq \min_{p\text{ monic}, \deg(p)=N-n} \frac{1}{C(K,\infty)^{-2n}}\int_\Gamma |F_n p|^2 d\mu\\
        &\leq \frac{\|F_n\|^2_K}{C(K,\infty)^{-2n}} \lambda_{N-n}(\mu,z_0,2).
    \end{align*}
    The maximum principle and (\ref{eq:4}) show
    \begin{align*}
        \|F_n\|_K \leq 1+ \mathcal{O}\left(\frac{\log n}{n^\alpha}\right).
    \end{align*}
    The result follows for
    \begin{align*}
        c_n = \begin{cases}
        \max\left(1, \frac{|\Phi_{z_0}(z_0)|^{n}}{|F_n(z_0)|} \|F_n\|_K \right),  & \text{if }z_0\ne\infty, \\
        \max\left(1, \|F_n\|_K \right), &\text{if }z_0=\infty.
        \end{cases}
    \end{align*}
\end{proof}
\begin{proposition}\label{lem:99}
    Let $K\subset\bbC$ be a compact set bounded by a $C^{1+}$ Jordan curve $\Gamma$. Let $\Omega = \Cinf\bs K$, $\rho: K\to \R$ nonnegative and upper-semicontinuous, and $z_0\in \Omega$. Suppose that $(\mu_n)_{n\in \N}$ is a sequence of measures in $\mathcal{M}_1(K)$ with $\mu_n\to \beta$ weak$^*$. Then 
    \[
    \limsup_{n\to\infty}W_{2,n}(\rho^2d\mu_n,z_0)\leq W_{2,\infty}(\rho^2d\beta,z_0).
    \]
\end{proposition}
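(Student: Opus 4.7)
The plan is to reduce to continuous weights and then combine Lemma~\ref{lem3} with the weak$^*$-continuity of $\mu\mapsto\lambda_k(\rho^2\,d\mu,z_0,2)$ from Lemma~\ref{lem4}. First I would handle the weight: since $\rho$ is bounded and upper-semicontinuous, choose continuous functions $\tilde\rho_m:K\to[0,\infty)$ with $\tilde\rho_m\downarrow\rho$ pointwise. Because $\rho\leq\tilde\rho_m$, one has $W_{2,n}(\rho^2 d\mu_n,z_0)\leq W_{2,n}(\tilde\rho_m^2 d\mu_n,z_0)$, so it suffices to prove the proposition for each continuous $\tilde\rho_m$ and then let $m\to\infty$: by Lemma~\ref{prop3} the right-hand side tends to $W_{2,\infty}(\tilde\rho_m^2 d\beta,z_0)=S(\tilde\rho_m,z_0)\sqrt{S(f_{z_0},z_0)}$ (writing $\beta=f_{z_0}d\omega_{z_0}+\beta_s$), and dominated convergence applied to $\log\tilde\rho_m\downarrow\log\rho$ (with $\log\tilde\rho_1$ giving a uniform upper bound) sends this further to $S(\rho,z_0)\sqrt{S(f_{z_0},z_0)}=W_{2,\infty}(\rho^2 d\beta,z_0)$.

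For the main case, assume $\rho$ is continuous. I would fix $k\in\N$ and apply Lemma~\ref{lem3} to the positive finite measure $\rho^2 d\mu_n$ with degree decrement equal to $n-k$ (valid for $n>k$), obtaining
\[
W_{2,n}(\rho^2 d\mu_n,z_0)\leq c_{n-k}\,W_{2,k}(\rho^2 d\mu_n,z_0),
\]
where the constant $c_{n-k}$ depends only on $n-k$, $K$ and $z_0$ and satisfies $c_{n-k}\to 1$ as $n\to\infty$. With $k$ fixed, Lemma~\ref{lem4} guarantees that $\mu\mapsto\lambda_k(\rho^2 d\mu,z_0,2)$ is weak$^*$-continuous on $\mathcal{M}_1(K)$, so $W_{2,k}(\rho^2 d\mu_n,z_0)\to W_{2,k}(\rho^2 d\beta,z_0)$ as $n\to\infty$. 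Passing to $\limsup$ yields
\[
\limsup_{n\to\infty}W_{2,n}(\rho^2 d\mu_n,z_0)\leq W_{2,k}(\rho^2 d\beta,z_0)\quad\text{for every }k,
\]
and Theorem~\ref{thm11} applied to the fixed measure $\rho^2 d\beta$ shows that the right side tends to $W_{2,\infty}(\rho^2 d\beta,z_0)$ as $k\to\infty$, completing the proof.

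The principal obstacle, and the key idea, is recognizing that Lemma~\ref{lem3} converts the ``diagonal'' quantity $W_{2,n}(\rho^2 d\mu_n)$ (in which both the degree $n$ and the measure $\mu_n$ vary) into the fixed-degree quantity $W_{2,k}(\rho^2 d\mu_n)$, where weak$^*$-continuity in $\mu$ is available; once this is noticed, only standard convergence arguments remain. The reduction in the first step is routine but indispensable, since the continuity statement of Lemma~\ref{lem4} genuinely requires a continuous weight.
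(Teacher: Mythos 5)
Your proof is correct, and its core is the same as the paper's: use Lemma~\ref{lem3} to trade the varying degree $n$ for a fixed degree $k$ at the cost of a factor $c_{n-k}\to1$, exploit (semi)continuity in the measure at fixed degree, and then let $k\to\infty$ via the convergence of $W_{2,k}(\rho^2d\beta,z_0)$ to $W_{2,\infty}(\rho^2d\beta,z_0)$. The one difference is that you invoke the full weak$^*$ continuity of Lemma~\ref{lem4}, which forces you to first approximate $\rho$ from above by continuous weights and then pass to the limit in the entropy integrals. This reduction is sound (the monotone approximation exists for bounded upper-semicontinuous functions, the comparison $W_{2,n}(\rho^2d\mu_n,z_0)\le W_{2,n}(\tilde\rho_m^2d\mu_n,z_0)$ is clear, and $S(\tilde\rho_m,z_0)\to S(\rho,z_0)$ by monotone convergence), but it is unnecessary: since you only need an upper bound on a $\limsup$, the upper semicontinuity of $\mu\mapsto\lambda_k(\rho^2d\mu,z_0,2)$ from Lemma~\ref{lem2}, which holds for arbitrary upper-semicontinuous $\rho$, already gives $\limsup_n W_{2,k}(\rho^2d\mu_n,z_0)\le W_{2,k}(\rho^2d\beta,z_0)$ directly. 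The paper proceeds this way and thereby avoids the approximation step entirely; your version buys nothing extra but costs an additional limit interchange.
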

\begin{proof}
    For a fixed $m\in \N$ Lemma \ref{lem3} shows 
    \begin{align*}
         \limsup_{n\to\infty} W_{2,n}(\rho^2d\mu_{n},z_0) \leq \limsup_{n\to\infty} c_{(n-m)} W_{2,m}(\rho^2d\mu_{n},z_0).
    \end{align*}
    By Lemma \ref{lem3} we know that $c_{n-m} \xrightarrow{n\to \infty} 1$. Lemma \ref{lem2} shows that the map $\mu \mapsto W_{2,m}(\rho^2d\mu,z_0)$ is upper-semicontinuous which implies
    \begin{align*}
        \limsup_{n\to\infty} c_{(n-m)} W_{2,m}(\rho^2d\mu_{n},z_0)= \limsup_{n\to\infty} W_{2,m}(\rho^2d\mu_{n},z_0) \leq W_{2,m}(\rho^2d\beta,z_0).
    \end{align*}
    Letting $m\to \infty$ shows
    \begin{align}\label{eq:5}
       \limsup_{n\to\infty}W_{2,n}(\rho^2d\mu_{n},z_0) \leq W_{2,\infty}(\rho^2d\beta,z_0)
    \end{align}
\end{proof}
\begin{theorem}\label{thm7}
    Let $K\subset\bbC$ be a compact set bounded by a $C^{1+}$ Jordan curve $\Gamma$. Let $\Omega = \Cinf\bs K$, $\rho: K\to \R$ nonnegative and upper-semicontinuous satisfying $S(\rho,z_0)>0$, and $z_0\in \Omega$. Suppose that $(\mu_n)_{n\in \N}$ is a sequence of measures in $\mathcal{M}_1(K)$ with 
    \begin{align*}
        S(\rho,z_0) \leq \liminf_{n\to\infty} W_{2,n}(\rho^2d\mu_n,z_0).
    \end{align*}
    Then $\mu_n$ converges weak$^*$ to the harmonic measure $\omega_{z_0}$ and
    \begin{align}\label{eq:98}
        \lim_{n\to\infty}W_{2,n}(\rho^2d\mu_n,z_0) = S(\rho,z_0).
    \end{align}
\end{theorem}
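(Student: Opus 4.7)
The plan is as follows. By weak* compactness (and metrizability, since $K\subset\bbC$ is a compact metric space) of $\mathcal{M}_1(K)$, it suffices by a standard subsequence argument to identify the weak* limit of every convergent subsequence of $(\mu_n)$ as $\omega_{z_0}$, and then read off the numerical limit from Proposition \ref{lem:99}.

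First I would pass to an arbitrary weak* convergent subsequence $\mu_{n_k}\to\beta\in\mathcal{M}_1(K)$. Proposition \ref{lem:99} yields
\begin{align*}
\limsup_{k\to\infty}W_{2,n_k}(\rho^2 d\mu_{n_k},z_0) \leq W_{2,\infty}(\rho^2 d\beta,z_0),
\end{align*}
while the hypothesis of the theorem, Proposition \ref{lem:99}, and Lemma \ref{lem5} combine into the chain
\begin{align*}
S(\rho,z_0) \leq \liminf_{k\to\infty} W_{2,n_k}(\rho^2 d\mu_{n_k},z_0) \leq \limsup_{k\to\infty} W_{2,n_k}(\rho^2 d\mu_{n_k},z_0) \leq W_{2,\infty}(\rho^2 d\beta,z_0) \leq S(\rho,z_0).
\end{align*}
Equality must therefore hold throughout; in particular $W_{2,\infty}(\rho^2 d\beta,z_0) = S(\rho,z_0)$. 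Since Lemma \ref{lem5} identifies $\omega_{z_0}$ as the \emph{unique} maximizer of $\mu\mapsto W_{2,\infty}(\rho^2 d\mu,z_0)$ over $\mathcal{M}_1(K)$, this forces $\beta=\omega_{z_0}$.

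Because every weak* convergent subsequence of $(\mu_n)$ has the same limit $\omega_{z_0}$, and $\mathcal{M}_1(K)$ is weak* sequentially compact and metrizable, the whole sequence converges weak* to $\omega_{z_0}$. Applying Proposition \ref{lem:99} once more to the entire sequence gives
\begin{align*}
\limsup_{n\to\infty} W_{2,n}(\rho^2 d\mu_n,z_0) \leq W_{2,\infty}(\rho^2 d\omega_{z_0},z_0) = S(\rho,z_0),
\end{align*}
which combined with the hypothesised lower bound yields \eqref{eq:98}.

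I do not anticipate any real obstacle: all the substantive work has already been done, namely the upper semicontinuity statement of Proposition \ref{lem:99} (which in turn rests on the Faber-polynomial estimate of Lemma \ref{lem3} and on the $L^2$ Szeg\H{o} asymptotics of Theorem \ref{thm11}) and the sharp uniqueness statement of Lemma \ref{lem5} (the equality case of Jensen's inequality). The only subtle point to verify along the way is the metrizability of the weak* topology on $\mathcal{M}_1(K)$, used so that subsequence arguments can upgrade to full-sequence convergence; this holds because $K$ is a compact metric space.
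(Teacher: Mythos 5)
Your proposal is correct and follows essentially the same route as the paper: pass to weak* convergent subsequences, apply the upper semicontinuity of Proposition \ref{lem:99}, squeeze against the hypothesis using the maximality and uniqueness statements of Lemma \ref{lem5} (which the paper unpacks via Lemma \ref{prop3} and Jensen's inequality) to identify every subsequential limit as $\omega_{z_0}$, and conclude by the subsequence principle. The only difference is cosmetic ordering — the paper first extracts the numerical limit from a limsup-achieving subsequence and then treats arbitrary subsequences, while you handle arbitrary subsequences at once — so no further commentary is needed.
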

\begin{proof}
Let $(\mu_{n_k})_{k\in\N}$ be a subsequence of $\mu_n$ so that 
\[
\lim_{k\to\infty}W_{2,n_k}(\rho^2d\mu_{n_k},z_0)=\limsup_{n\to\infty}W_{2,n}(\rho^2d\mu_{n},z_0).
\]
By passing to a further subsequence, using compactness of $\mathcal M_1(K)$, we can assume that $\mu_{n_k}\to\beta\in \mathcal{M}_1(K).$
Then Proposition \ref{lem:99} implies 
    \begin{align*}
       \limsup_{n\to\infty} W_{2,n}(\rho^2d\mu_{n},z_0) =\lim_{k\to\infty} W_{2,n_k}(\rho^2d\mu_{n_k},z_0)\leq W_{2,\infty}(\rho^2d\beta,z_0).
    \end{align*}
     Since $\beta=f_{z_0}d\omega_{z_0}+\beta_s\in\mathcal M_1(K)$, Jensen's inequality implies that $S(f_{z_0},z_0)\leq 1$. Hence, by Lemma  \ref{prop3}
     \[
     W_{2,\infty}(\rho^2d\beta,z_0)=S(\rho,z_0)\sqrt{S(f_{z_0},z_0)}\leq S(\rho,z_0).
     \]
    By assumption, it follows that
    \[
    \lim_{n\to\infty} W_{2,n}(\rho^2d\mu_{n},z_0)=W_{2,\infty}(\rho^2d\beta,z_0)=S(\rho,z_0).
    \]
    This shows \eqref{eq:98}. Now take an arbitrary subsequence  $(\mu_{n_k})_{k\in\N}$ that converges to some $\beta\in\mathcal{M}_1(K)$. Then, by what we have shown 
    \[
    \lim_{k\to\infty} W_{2,n_k}(\rho^2d\mu_{n_k},z_0)=W_{2,\infty}(\rho^2d\beta,z_0)=S(\rho,z_0).
    \]
    Hence, Lemma \ref{lem5} implies that $\beta = \omega_{z_0}$.
    Thus, every subsequence of $(\mu_n)_{n\in\N}$ has a further subsequence that converges to $\omega_{z_0}$. Then, the whole sequence $\mu_n \xrightarrow{n\to\infty} \omega_{z_0}$.
\end{proof}
This will allow us to show the convergence of OPMs and $L^\infty$ Widom factors for weights with a positive entropy integral.
\begin{corollary}\label{thm1}Let $K\subset\bbC$ be a compact set bounded by a $C^{1+}$ Jordan curve $\Gamma$. Let $\Omega = \Cinf\bs K$, $\rho:K\to \R$ nonnegative and upper-semicontinuous satisfying $S(\rho,z_0)>0$, and $z_0\in \Omega$.
    Let $(\nu_n)$ be a sequence of OPMs with $\nu_n$ of order $n$. Then $\nu_n $ converges weak$^*$ to the harmonic measure $\omega_{z_0}$ for $K$ and the point $z_0$. Furthermore,
    \begin{align*}
        W_{\infty,n}(\rho,z_0)= W_{2,n}(\rho^2d\nu_n,z_0) \xrightarrow{n\to\infty}S(\rho,z_0).
    \end{align*}
\end{corollary}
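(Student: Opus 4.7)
The plan is to recognize that this corollary is essentially a direct synthesis of three earlier results: the identification of $t_n(\rho,z_0)$ with an $L^2$ extremal problem on OPMs (Proposition \ref{prop7}), the universal lower bound for $L^2$ Widom factors of OPMs (Corollary \ref{cor1}), and the convergence criterion for sequences of measures (Theorem \ref{thm7}). Essentially no new work is required beyond gluing these together.

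First, I would observe that since $\nu_n$ attains the supremum in Proposition \ref{prop7}, we have $t_n(\rho,z_0)^2 = \lambda_n(\rho^2 d\nu_n,z_0,2)$. Dividing both sides by $C(K,z_0)^{2n}$ and taking square roots yields the equality $W_{\infty,n}(\rho,z_0) = W_{2,n}(\rho^2 d\nu_n,z_0)$ claimed in the statement. This reduces both assertions of the corollary to statements about the sequence of $L^2$ Widom factors along the OPMs $\nu_n$.

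Next, I would invoke Corollary \ref{cor1}, which states that $W_{2,n}(\rho^2 d\nu_n,z_0) \ge S(\rho,z_0)$ for every $n$. In particular,
\[
\liminf_{n\to\infty} W_{2,n}(\rho^2 d\nu_n,z_0) \ge S(\rho,z_0),
\]
so the sequence $(\nu_n)$ meets the hypothesis of Theorem \ref{thm7}. Applying that theorem directly gives simultaneously the weak$^*$ convergence $\nu_n \to \omega_{z_0}$ and the limit $\lim_{n\to\infty} W_{2,n}(\rho^2 d\nu_n,z_0) = S(\rho,z_0)$, which together with the identification from the first step completes the proof.

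Since every ingredient is already in place, I do not anticipate any genuine obstacle; the only minor thing to verify is that an OPM of order $n$ exists for each $n$, which is guaranteed by Lemma \ref{lem2}, so that the sequence $(\nu_n)$ is well-defined in the first place.
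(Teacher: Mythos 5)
Your proposal is correct and follows essentially the same route as the paper: identify $W_{\infty,n}(\rho,z_0)=W_{2,n}(\rho^2 d\nu_n,z_0)$ via Proposition \ref{prop7} (with existence of the maximizer from Lemma \ref{lem2}), use Corollary \ref{cor1} to verify the hypothesis of Theorem \ref{thm7}, and apply that theorem to get both the weak$^*$ convergence and the limit.
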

\begin{proof}
    By Corollary \ref{cor1} $\nu_n$ satisfies
    \begin{align*}
        W_{2,n}(\rho^2d\nu_n,z_0)\geq S(\rho,z_0).
    \end{align*}
    Thus, the sequence $(\nu_n)_{n\in\N}$ satisfies the requirements of Theorem \ref{thm7} and converges to $\omega_{z_0}$. By Proposition \ref{prop7} and Theorem \ref{thm7} we have $W_{\infty,n}(\rho,z_0)=W_{2,n}(\rho^2d\nu_n,z_0)\xrightarrow{n\to\infty}S(\rho,z_0)$ which proves the second assertion.
\end{proof} 
For the asymptotics of the extremal polynomials, we consider the Hardy space $H^2(\Omega,\rho^2d\omega_{z_0})$. To simplify notation we define $H^2(\Omega,\rho)=H^2(\Omega,\rho^2d\omega_{z_0})$ with reproducing kernel $K_\rho = K_{\rho^2d\omega_{z_0}}$ and extremal function $ F_{\rho^2d\omega_{z_0},z_0,2} = \frac{R_\rho(z_0)}{R_\rho}$ which is exactly the extremal function $F_{\rho,z_0,\infty}$ in $H^\infty(\Omega,\rho)$.
\begin{prf}{Theorem \ref{thm9}}
    If $S(\rho,z_0)>0$ we already know that the $L^\infty$ Widom factors converge to $S(\rho,z_0)$ by Corollary \ref{thm1}. Therefore, suppose $S(\rho,z_0)=0$. We define $\rho_\varepsilon= \rho+ \varepsilon$ for $\varepsilon>0$. Then $\rho_\varepsilon$ is nonnegative and upper-semicontinuous with $S(\rho_\varepsilon)>0$. Thus Corollary \ref{thm1} shows 
    \begin{align*}
        \limsup_{n\to \infty} W_{\infty,n}(\rho,z_0) \leq \lim_{n\to \infty}  W_{\infty,n}(\rho_\varepsilon,z_0) = S(\rho_\varepsilon,z_0). 
    \end{align*}
    Taking the limit $\varepsilon\to 0$ implies $S(\rho_\varepsilon,z_0)\to S(\rho,z_0)=0$ and 
    \begin{align*}
        \lim_{n\to\infty} W_{\infty,n}(\rho,z_0) = S(\rho,z_0).
    \end{align*}
    Let us assume that $S(\rho,z_0)>0$. Then we can consider $H^2(\Omega,\rho)$ where the fact that $\frac{T_{n,\rho,z_0}}{\Phi_{z_0}^n} \in H^\infty(\Omega)$ implies $\frac{T_{n,\rho,z_0}}{\Phi_{z_0}^n} \in H^2(\Omega,\rho)$. It follows that
    \begin{align*}
        \int_\Gamma \left|\frac{C(K,z_0)^{-n}}{\Phi_{z_0}(z)^n} T_{n,\rho,z_0}(z) - F_{\rho,z_0,\infty}(z) \right|^2 \rho^2 d\omega_{z_0}(z) &= \int_\Gamma |C(K,z_0)^{-n} T_{n,\rho,z_0}(z)|^2 \rho^2 d\omega_{z_0}(z) \\
        &-2\text{Re}\int_\Gamma \frac{C(K,z_0)^{-n}}{\Phi_{z_0}(z)^n} T_{n,\rho,z_0}(z) \overline{F_{\rho,z_0,\infty}(z)} \rho^2 d\omega_{z_0}(z) \\
        &+ \int_\Gamma |F_{\rho,z_0,\infty}|^2 \rho^2 d\omega_{z_0}
    \end{align*}
    The first term can be estimated by
    \begin{align*}
        \int_\Gamma |C(K,z_0)^{-n} T_{n,\rho,z_0}(z)|^2 \rho^2 d\omega_{z_0}(z) \leq C(K,z_0)^{-2n}t_n(\rho,z_0)^2 =W_{\infty,n}(\rho,z_0)^2.
    \end{align*}
    For the second term, we use the reproducing kernel property of $K_{\rho}$, whence
    \begin{align*}
        \int_\Gamma \frac{C(K,z_0)^{-n}}{\Phi_{z_0}(z)^n} T_{n,\rho,z_0}(z) \overline{F_{\rho,z_0,\infty}(z)} \rho^2 d\omega_{z_0}(z) &= \frac{1}{K_{\rho}(z_0,z_0)}\int_\Gamma \frac{C(K,z_0)^{-n}}{\Phi_{z_0}(z)^n} T_{n,\rho,z_0}(z) \overline{K_{\rho}(z,z_0)}\rho^2d\omega_{z_0}(z) \\
        &=S(\rho,z_0)^2 \frac{C(K,z_0)^{-n}}{\Phi_{z_0}(z_0)^n} T_{n,\rho,z_0}(z_0)= S(\rho,z_0)^2.
    \end{align*}
    For the third term, we note
    \begin{align*}
        \int_\Gamma |F_{\rho,z_0,\infty}|^2 \rho^2 d\omega_{z_0} = \|F_{\rho,z_0,\infty}\|_{H^2(\Omega,\rho)}= S(\rho,z_0)^2.
    \end{align*}
    Thus,
    \begin{align*}
        \int_\Gamma \left|\frac{C(K,z_0)^{-n}}{\Phi_{z_0}(z)^n} T_{n,\rho,z_0}(z) - F_{\rho,z_0,\infty}(z) \right|^2 \rho^2 d\omega_{z_0}(z) \leq W_{\infty,n}(\rho,z_0)^2 - 2 S(\rho,z_0)^2+ S(\rho,z_0)^2 \xrightarrow{n\to\infty} 0
    \end{align*}
    which proves the convergence in $H^2(\Omega,\rho)$.\\
    Since point-evaluation is continuous and the reproducing kernel of $H^2(\Omega,\rho)$ is continuous, we know that the polynomials converge locally uniformly (as in the proof of Theorem~\ref{thm10}). 
\end{prf}

\section{Ahlfors problem}\label{s5}

As an application, we want to compute asymptotics of the polynomial Ahlfors problem for $C^{1+}$ Jordan regions. 
\begin{definition}
    Let $K\subset\bbC$ be a compact set and $z_0 \in \bbC$. Then we define
    \begin{align*}
        A_n(z_0) = \inf\{\|P\|_K: P\in \mathcal{P}_n, P(z_0) = 0,P'(z_0)=1\},
    \end{align*}
    where $\|P\|_K = \sup_{z\in K}|P(z)|$. If this infimum is attained by $Q_{n,z_0}$, we call it the Ahlfors polynomial associated with $K$.
\end{definition}
\begin{lemma}
    Let $K\subset\bbC$ be a compact set and $z_0 \in \bbC$. Then,
    \begin{align*}
        A_n(z_0) = t_{n-1}(|\cdot-z_0|,z_0).
    \end{align*}
\end{lemma}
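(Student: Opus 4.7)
The plan is to establish a norm-preserving bijection between the feasible sets of the two minimization problems, which immediately forces the two infima to coincide.

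First, I would unwind the definition of $t_{n-1}(|\cdot - z_0|, z_0)$: by \eqref{intro21} it equals
\[
\inf\Bigl\{\sup_{z\in K}|z-z_0|\,|Q(z)| : Q\in\mathcal P_{n-1},\ Q(z_0)=1\Bigr\}.
\]
On the Ahlfors side, any admissible $P\in\mathcal P_n$ satisfies $P(z_0)=0$, so factoring out the root gives $P(z)=(z-z_0)Q(z)$ with $Q\in\mathcal P_{n-1}$. Differentiating yields $P'(z)=Q(z)+(z-z_0)Q'(z)$, so the normalization $P'(z_0)=1$ is equivalent to $Q(z_0)=1$. Conversely, every $Q\in\mathcal P_{n-1}$ with $Q(z_0)=1$ produces, via $P(z):=(z-z_0)Q(z)$, an admissible competitor for $A_n(z_0)$. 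This gives a bijection between the two feasible classes.

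Next I would check that the objective functionals agree under this bijection. For the paired $P$ and $Q$,
\[
\|P\|_K=\sup_{z\in K}|P(z)|=\sup_{z\in K}|z-z_0|\,|Q(z)|=\|Q\|_{|\cdot-z_0|}.
\]
Taking the infimum over the common parameter set then yields $A_n(z_0)=t_{n-1}(|\cdot-z_0|,z_0)$.

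There is really no obstacle here; the one minor subtlety is that the weight $\rho(z)=|z-z_0|$ is continuous, nonnegative, and strictly positive on $K$ (since $z_0\in\Cinf\setminus K$), so $\rho$ qualifies as a weight in the sense of Section~\ref{s3} and Lemma~\ref{L.2.3} applies, guaranteeing that $t_{n-1}(|\cdot-z_0|,z_0)$ is a bona fide extremal value. With this observation the identification above is immediate.
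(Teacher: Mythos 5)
Your proof is correct and follows exactly the paper's argument: factor $P(z)=(z-z_0)Q(z)$, observe that $P'(z_0)=1$ is equivalent to $Q(z_0)=1$, and identify $\|P\|_K$ with the weighted norm $\|Q\|_{|\cdot-z_0|}$. The paper carries this out in a single displayed line; your version just spells out the bijection of feasible sets more explicitly.
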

\begin{proof}
    \begin{align*}
        A_n(z_0) = \inf_{P\in \mathcal{P}_n, P(z_0) = 0,P'(z_0)=1}\|P\|_K = \inf_{P\in \mathcal{P}_{n-1},P(z_0)=1}\|(z-z_0)P\|_K = t_{n-1}(|\cdot-z_0|,z_0).
    \end{align*}
\end{proof}
Thus, we can relate the Ahlfors problem to the residual problem for which Theorem \ref{thm9} allows us to compute the asymptotics.
\begin{theorem}\label{thm12}
    Let $K\subset\bbC$ be a compact set bounded by a $C^{1+}$ Jordan curve $\Gamma$. Let $\Omega = \Cinf\bs K$ and $z_0\in \Omega\backslash\{\infty\}$. Then
    \begin{align*}
        \lim_{n\to\infty} |\Phi_{\infty}'(z_0)\Phi_{\infty}(z_0)|^{n} A_n(z_0) = |\Phi_{\infty}(z_0)|^2-1.
    \end{align*}
    For the Ahlfors polynomial $Q_{n,z_0}$ we get
    \begin{align*}
        \frac{\Phi_{\infty}(z_0)^n \Phi_{\infty}'(z_0)}{\Phi_{\infty}(z)^n}Q_{n,z_0}(z)\xrightarrow{n\to\infty} (\Phi_{\infty}(z)-\Phi_{\infty}(z_0)) \frac{|\Phi_{\infty}(z_0)|^2-1}{\overline{\Phi_{\infty}(z_0)}\Phi_{\infty}(z)-1}
    \end{align*}
    locally uniform for $z\in \Omega$. 
\end{theorem}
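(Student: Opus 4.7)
The plan is to reduce Theorem \ref{thm12} to Corollary \ref{cor3} and the polynomial limit of Theorem \ref{thm9}, both applied with the weight $\rho_{z_0}(z) = |z - z_0|$. The preceding lemma already identifies $A_n(z_0) = t_{n-1}(\rho_{z_0}, z_0)$ and $Q_{n,z_0}(z) = (z - z_0)\, T_{n-1,\rho_{z_0},z_0}(z)$, so everything comes down to computing $S(\rho_{z_0}, z_0)$ and identifying the outer function $R_{\rho_{z_0}}$ explicitly in terms of $\Phi_\infty$.

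For the extremal value, I would evaluate $\int_\Gamma \log|\zeta - z_0|\, d\omega_{z_0}(\zeta)$ starting from \eqref{GrFn} with $z = z_0$ and letting $w \to z_0$. Combined with the conformal-invariance expression
\[
g_K(z, w) = \log\left|\frac{\overline{\Phi_\infty(w)}\Phi_\infty(z) - 1}{\Phi_\infty(z) - \Phi_\infty(w)}\right|,
\]
obtained by pulling back the Green function of $\Cinf \setminus \overline{\D}$, a direct Taylor expansion of the right-hand side around $w = z_0$ yields
\[
S(\rho_{z_0}, z_0) = \frac{|\Phi_\infty(z_0)|^2 - 1}{|\Phi_\infty'(z_0)|\,|\Phi_\infty(z_0)|}.
\]
Substituting $C(K, z_0) = 1/|\Phi_\infty(z_0)|$ into Corollary \ref{cor3} and rearranging produces the claimed asymptotic for $A_n(z_0)$.

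For the polynomial asymptotics, I would identify $R_{\rho_{z_0}}$ by observing that $F(z) := (z - z_0)/R_{\rho_{z_0}}(z)$ is holomorphic on $\Omega$ with a single simple zero at $z_0$, a simple pole at $\infty$, and $|F| \equiv 1$ on $\Gamma$. Transporting to $\Cinf \setminus \overline{\D}$ via $\Phi_\infty$ and using that every such function on the exterior of the unit disc is of Blaschke type, I expect $F$ to take the form
\[
F(z) = c\,\frac{\Phi_\infty(z)\bigl(\Phi_\infty(z) - \Phi_\infty(z_0)\bigr)}{\overline{\Phi_\infty(z_0)}\Phi_\infty(z) - 1},
\]
with a unimodular $c$ fixed by $R_{\rho_{z_0}}(\infty) > 0$ and the leading-order match of both sides as $z \to \infty$. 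Substituting the resulting expression for $R_{\rho_{z_0}}(z_0)/R_{\rho_{z_0}}(z)$ into Theorem \ref{thm9}, multiplying through by $z - z_0$, and using the relation $\Phi_{z_0}(z) = (\overline{\Phi_\infty(z_0)}/|\Phi_\infty(z_0)|)\Phi_\infty(z)$ will produce the stated locally uniform limit for $Q_{n,z_0}$.

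The main obstacle will be bookkeeping the phase: Theorem \ref{thm9} rescales by powers of $\Phi_{z_0}$ and by the real quantity $C(K, z_0)^{-n} = |\Phi_\infty(z_0)|^n$, whereas Theorem \ref{thm12} is stated in terms of $\Phi_\infty$ and the complex power $\Phi_\infty(z_0)^n$. The unimodular factor $(\overline{\Phi_\infty(z_0)}/|\Phi_\infty(z_0)|)^n$ arising from the difference between $\Phi_{z_0}$ and $\Phi_\infty$ must cancel precisely against the phase hidden in the explicit formula for $R_{\rho_{z_0}}(z_0)$, converting $|\Phi_\infty(z_0)|^n$ into $\Phi_\infty(z_0)^n$ in the final expression. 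As a consistency check, $|F| \equiv 1$ on $\Gamma$ forces the limit function to have constant modulus $|\Phi_\infty(z_0)|^2 - 1$ on $\Gamma$, which independently recovers the extremal value statement from the polynomial limit via the maximum principle.
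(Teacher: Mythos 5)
Your proposal is correct and follows essentially the same route as the paper: reduce to Corollary \ref{cor3} and Theorem \ref{thm9} with the weight $\rho_{z_0}(z)=|z-z_0|$, and identify $R_{\rho_{z_0}}$ explicitly as a Blaschke-type expression in $\Phi_\infty$, from which both the value $S(\rho_{z_0},z_0)=\bigl(|\Phi_\infty(z_0)|^2-1\bigr)/|\Phi_\infty'(z_0)\Phi_\infty(z_0)|$ and the limit function follow. Your Green-function/Taylor-expansion computation of $S(\rho_{z_0},z_0)$ is a harmless variant of the paper's direct evaluation of $|R_{\rho_{z_0}}(z_0)|$, and the phase bookkeeping you flag is in fact a non-issue, since $\Phi_{z_0}(z_0)^{n-1}/\Phi_{z_0}(z)^{n-1}=\Phi_\infty(z_0)^{n-1}/\Phi_\infty(z)^{n-1}$ identically.
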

\begin{prf}{Theorem \ref{thm12} and Corollary \ref{cor3}}
    Since $\rho(z)=|z-z_0|$ is positive, continuous, Theorem \ref{thm9} applies and 
    \begin{align*}
        \lim_{n\to \infty} |\Phi_{\infty}(z_0)|^{n-1}A_n(z_0) = \lim_{n\to \infty} \Phi_{z_0}(z_0)^{n-1}A_n(z_0) =S(\rho,z_0).
    \end{align*}
    Furthermore the Ahlfors polynomial for $K$ exists and is unique since $Q_{n,z_0} = (z-z_0)T_{n-1,\rho,z_0}$ where $T_{n-1,\rho,z_0}$ is the residual polynomial for $\rho$. Also
    \begin{align*}
         \lim_{n\to \infty} \frac{\Phi_{\infty}(z_0)^{n-1}}{\Phi_{\infty}(z)^{n-1}} Q_{n,z_0}(z) =(z-z_0)\lim_{n\to \infty} \frac{\Phi_{z_0}(z_0)^{n-1}}{\Phi_{z_0}(z)^{n-1}} T_{n-1,\rho,z_0} =(z-z_0)F_{\rho,z_0,\infty}(z)
    \end{align*}
    locally uniform for $z\in \Omega$.
    For the first assertion, we need to show
    \begin{align*}
        S(\rho,z_0) = \frac{|\Phi_{\infty}(z_0)|^2-1}{|\Phi_{\infty}'(z_0)\Phi_{\infty}(z_0)|}.
    \end{align*}
    Proposition \ref{prop4} shows $S(\rho,z_0)=|R_\rho(z_0)|$ where $R_\rho$ is the nonzero, analytic function in $\Omega$ that is $\rho $ on the boundary. $z-z_0$ has a pole at $\infty$ and zero at $z_0$, which means we have to multiply with the right Blaschke factors. For $z\in \partial \Omega$ we have $|\Phi_\infty(z)|=1$ and
    \begin{align*}
        |R_\rho(z)|= |z-z_0| = \frac{|z-z_0|}{|\Phi_{\infty}(z)|}\left|\frac{\overline{\Phi_\infty(z)}-\overline{\Phi_\infty(z_0)}}{\Phi_{\infty}(z)- \Phi_\infty(z_0)}\right|= \left|\frac{\overline{\Phi_{\infty}(z_0)}\Phi_{\infty}(z)-1}{\Phi_{\infty}(z)} \frac{z-z_0}{\Phi_{\infty}(z)-\Phi_{\infty}(z_0)} \right|.
    \end{align*}
    The right-hand side is nonzero, analytic in $\Omega$ with the right boundary values. Therefore, we can evaluate
    \begin{align*}
        S(\rho,z_0)&=|R_\rho(z_0)|=  \frac{|\Phi_{\infty}(z_0)|^2-1}{|\Phi_{\infty}(z_0)|}  \left| \lim_{z\to z_0}\frac{z-z_0}{\Phi_{\infty}(z)-\Phi_{\infty}(z_0)}\right|\\
        &=\frac{|\Phi_{\infty}(z_0)|^2-1}{|\Phi_{\infty}'(z_0)\Phi_{\infty}(z_0)|}.
    \end{align*}
    Similarly, we get 
    \begin{align*}
        F_{\rho,z_0,\infty}(z) = \frac{R_\rho(z_0)}{R_\rho(z)}= \frac{|\Phi_{\infty}(z_0)|^2-1}{\Phi_{\infty}'(z_0)\Phi_{\infty}(z_0)} \frac{\Phi_{\infty}(z)-\Phi_{\infty}(z_0)}{z-z_0} \frac{\Phi_{\infty}(z)}{\overline{\Phi_{\infty}(z_0)}\Phi_{\infty}(z)-1}
    \end{align*}
    which proves the second assertion.
\end{prf}


\end{document}